\theoremstyle{plain}
\declaretheorem[title=Theorem, parent=section]{sa}
\declaretheorem[title=Lemma,sibling=sa]{lem}
\declaretheorem[title=Corollary,sibling=sa]{cor}
\declaretheorem[title=Proposition,sibling=sa]{prop}
\newtheorem*{thm*}{Satz}
\theoremstyle{definition}
\declaretheorem[title=Definition,sibling=sa]{defi}
\declaretheorem[unnumbered,title=Remark]{bem}
\declaretheorem[title=Example]{bsp}
\numberwithin{equation}{section}
\newcommand{\R}{  \mathbb{R}}
\newcommand{\N}{  \mathbb{N}}
\newcommand{\Z}{  \mathbb{Z}}
\newcommand{\cA}{  \mathcal{A}}
\newcommand{\cE}{  \mathcal{E}}
\newcommand{\cK}{  \mathcal{K}}
\newcommand{\cM}{  \mathcal{M}}
\newcommand{\cU}{  \mathcal{U}}
\renewcommand{\epsilon}{\varepsilon}
\DeclareMathOperator{\Supp}{supp}
\newcommand{\norm}[1]{\left\| #1 \right\|}
\newcommand{\bet}[1]{\left| #1 \right|}
\newcommand{\Ind}[1]{\ensuremath \mathbbm{1}_{#1}}
\renewcommand{\d}{\, \textnormal{d}}
\newcommand{\pt}[1]{\partial_t #1}
\newcommand{\akon}{\ensuremath{(2-\alpha)}}
\DeclareMathOperator*{\osc}{osc}
\DeclareMathOperator{\supp}{supp}
\title{Local Regularity for Parabolic Nonlocal Operators}
\author{Matthieu Felsinger, Moritz Kassmann}
\subjclass[2010]{Primary 35B65, Secondary 47G20, 60J75}
\keywords{integro-differential operator, nonlocal operator, parabolic equation, Moser iteration, weak Harnack inequality, Hölder regularity}
\thanks{Both authors were supported by the German Science Foundation DFG via SFB 701}
\begin{document}

\begin{abstract}
Weak solutions to parabolic integro-differential operators of order $\alpha \in (\alpha_0, 2)$ are studied. Local a priori estimates of Hölder norms and a weak Harnack inequality are proved. These results are robust with respect to $\alpha \nearrow 2$. In this sense, the presentation is an extension of Moser's result from \cite{moser-neu}.
\end{abstract}

\address{Fakultät für Mathematik\\
Universität Bielefeld \\
Postfach 100131 \\
D-33501 Bielefeld}
\email{m.felsinger@math.uni-bielefeld.de}

\address{Fakultät für Mathematik\\
Universität Bielefeld \\
Postfach 100131 \\
D-33501 Bielefeld}
\email{moritz.kassmann@uni-bielefeld.de}

\date{\today}

\maketitle
\tableofcontents


\section{Introduction}
Throughout this article $\Omega$ denotes a bounded domain in $\R^d$ and $I$ an
open, bounded interval in $\R$.
The aim of this article is to study properties of solutions $u:I
\times \R^{d} \to \R$ to 
\begin{align}\label{eq:par_equation}
 \partial_t u(t,x) - Lu(t,x) = f(t,x), \qquad (t,x) \in I \times \Omega,
\end{align}
where $L$ is an integro-differential operator of the form
\begin{align}\label{eq:def_L}
 L u = \operatorname{p.v.}  \int_{\R^d} \left[ u(t,y)-u(t,x)\right]
k_t(x,y) \d y\, .
\end{align}
The kernel $k \colon \R \times \R^{d} \times \R^d  \to [0,\infty)$, $(t,x,y) \mapsto k_t(x,y)$, is assumed to be measurable with a certain singularity at the diagonal $x=y$.

Note that in the case $k_t(x,y) = \frac{\cA_{d,-\alpha}}{\bet{x-y}^{d+\alpha}}$ with a constant $\cA_{d,-\alpha}$ comparable to $\alpha(2-\alpha)$, the integro-differential operator $L$
defined by  \eqref{eq:def_L} is equal to the pseudo-differential operator $(-\Delta) ^{\alpha/2}$
with symbol $|\xi|^\alpha$. Thus the operator in equation
\eqref{eq:par_equation} can be seen as an integro-differential operator of order
$\alpha$ with bounded measurable coefficients.

Let us specify the class of admissible kernels. We assume that the kernels $k$ are of the form 
$ k_t(x,y)=a(t,x,y) k_0(x,y)$ 
for measurable functions $k_0 \colon \R^d \times \R^d \to [0,\infty)$ and $a
\colon  \R \times \R^{d} \times \R^d  \to [\tfrac12,1]$, which are symmetric
with respect to $x$ and $y$.

Fix $\alpha_0 \in (0,2)$ and $\Lambda \geq \max(1,\alpha_0^{-1})$. We say that a kernel $k$ belongs to $\cK(\alpha_0,\Lambda)$, if there is $\alpha \in (\alpha_0,2)$ such that $k_0$ satisfies the following properties: for every $x_0 \in \R^d$, $\rho \in (0,2)$ and $v \in H^{\alpha/2}(B_{\rho}(x_0))$
\allowdisplaybreaks
\begin{gather*}
\rho^{-2} \int_{\bet{x_0-y}\leq \rho} \bet{x_0-y}^2 k_0(x_0,y) \d y + \int_{\bet{x_0-y}>\rho} k_0(x_0,y) \d y \leq \Lambda \rho^{-\alpha}, \tag{K$_1$} \label{eq:K-1}\\
\begin{split}
 \Lambda^{-1}  \iint\limits_{B\,B} \left[ v(x)-v(y)\right]^2 &k_0(x,y) \d x \d y \leq 
(2-\alpha) \iint\limits_{B\,B}   \frac{\left[ v(x)-v(y)\right]^2}{\bet{x-y}^{d+\alpha}} \d x \d y \\
&\leq \Lambda  \iint\limits_{B\,B}  \left[ v(x)-v(y)\right]^2 k_0(x,y) \d x \d y,\quad \text{where }B=B_{\rho}(x_0).  
\end{split}\tag{K$_2$} \label{eq:K-2}
\end{gather*}

We prove the following two theorems:


\begin{sa}[Weak Harnack inequality] \label{thm:harnack} Let $k \in \cK(\alpha_0,\Lambda)$ for some $\alpha_0 \in (0,2)$ and $\Lambda \geq 1$. Then there is a constant $C=C(d,\alpha_0,\Lambda)$ such that for every supersolution $u$ of \eqref{eq:par_equation} on $Q = (-1,1) \times B_2(0)$ which is nonnegative in $(-1,1) \times \R^d$ the following inequality holds:
\begin{equation*} \label{eq:harnack-speziell}
 \norm{u}_{L^1(U_\ominus)} \leq C \left( \inf_{U_{\oplus}} u + \norm{f}_{L^\infty(Q)} \right) \tag{HI}
\end{equation*}
where $U_\oplus=\left(1-(\tfrac12)^\alpha,1\right) \times B_{1/2}(0)$, $U_\ominus=\left( -1, -1+(\tfrac12)^\alpha \right) \times B_{1/2}(0)$.
\end{sa}


In order to prove Hölder regularity we need an additional assumption on $k_0$:
We say that a kernel $k$ belongs to $\cK'(\alpha_0,\Lambda)$ if $k \in
\cK(\alpha_0, \Lambda)$ and if
\begin{equation*}
\sup_{x \in B_2(0)} \int_{\R^d \setminus B_3(0)} \bet{y}^{1/\Lambda} k_0(x,y) \d y \leq \Lambda\ . \tag{K$_3$} \label{eq:K-3}
\end{equation*}
Note that this condition is satisfied if $ \int_{\R^d \setminus
B_3(0)} \bet{y}^{\delta} k_0(x,y) \d y$ is uniformly bounded in $B_2(0)$ for
some $\delta >0$.

\begin{sa}[Hölder regularity] \label{thm:hoelder_main} Let $k \in \cK'(\alpha_0,\Lambda)$ for some $\alpha_0 \in (0,2)$ and $\Lambda \geq 1$. Then there is a constant $\beta=\beta(d,\alpha_0,\Lambda)$ such that for every solution $u$ of \eqref{eq:par_equation} in $Q=I \times \Omega$ with $f=0$ and every $Q'\Subset Q$ the following estimate holds:
\begin{align}\label{eq:hoelder_main}
 \sup_{(t,x),(s,y) \in Q'} \frac{ \bet{u(t,x)-u(s,y)} }{\left( \bet{x-y} +
\bet{t-s}^{1/\alpha} \right)^{\beta}} \leq \frac{\norm{u}_{L^\infty(I \times
\R^d)}}{\eta^\beta} \,, \tag{HC}
\end{align}
with some constant $\eta=\eta(Q, Q') >0$.
\end{sa}

\begin{bem}{\ }\vspace*{-0.5em}
\begin{enumerate}[1.]\itemsep0.5em
 \item Note that in \autoref{thm:harnack} the domains $U_\oplus$, $U_\ominus$ can be replaced by $\left(\tfrac34 ,1\right) \times B_{1/2}(0)$, $\left(-1,-\tfrac34\right) \times B_{1/2}(0)$, respectively. Similarly, $\bet{t-s}^{1/\alpha}$ can be replaced by $\bet{t-s}^{1/2}$ in \autoref{thm:hoelder_main}.\pagebreak[3]
 \item For this article we choose the most simple characteristic setting in order to explain the main arguments.
\begin{itemize}
 \item[--] One can obtain \autoref{thm:harnack} for solutions $u$ in general domains in $\R^{d+1}$ by rescaling $u$ to a function that is a solution in a standard cylinder $(-1,1) \times B_2(0)$, cf. \autoref{lem:scaling}.
 \item[--] In equation \eqref{eq:par_equation} it is possible to consider more general functions $f$ and additional terms of lower order. When considering terms involving derivatives of the solution $u$, an additional assumption would be $\alpha > 1$. These extensions are analogous to the corresponding modifications in the case of a second order differential operator. 
\end{itemize}
\item Note that a strong Harnack inequality, i.e. $\norm{u}_{L^1(U_\ominus)}$ replaced by $\sup_{U_\ominus} u$ in \eqref{eq:harnack-speziell}, cannot be obtained under our assumptions, see \cite[Theorem 1]{BS05} and the discussion on page 148 there. Thus, the strong formulation of Harnack's inequality fails although conditions \eqref{eq:K-1} and \eqref{eq:K-2} ensure nondegeneracy of the operator $L$ in \eqref{eq:def_L}. In this sense the nonlocal case differs from the case of local diffusion operators.
\end{enumerate}
\end{bem}

One feature of our approach is that the results depend only on $d$, $\alpha_0$ and $\Lambda$, but not on $\alpha$. Thus, in addition to the nonlocality of the operator, one interesting feature is that the order of differentiability can be any number from the interval $(\alpha_0,2)$ and the constants in the results do not depend on this number. We consider it interesting that the classical approach by Moser can be modified to cover this range of problems. However, we do not prove that the classical results of \cite{moser-alt} for the local diffusion case, i.e. $\alpha =2$, follow formally from our results. 

Before we go into details let us shortly discuss related results:
Similar parabolic problems of the above kind are treated already in
\cite{kom95}, where global methods are used. For time-independent kernels the article
\cite{bass-levin} establishes a
Harnack inequality and Hölder regularity estimates with the use of methods both from probability and analysis. The approach of \cite{bass-levin} has been
extended significantly, see \cite{CKK11} as one example. Regularity results like \autoref{thm:hoelder_main} are important for the construction and approximation of corresponding Markov processes. In our case these processes are jump processes with discontinuous paths and without second moments.

Nonlinear nonlocal time-dependent variational problems are studied in \cite{caffarelli-chan-vasseur}. The authors extend the method of De Giorgi to nonlocal parabolic problems. Hölder regularity estimates for linear equations with irregular coefficients are a central tool in this approach. They lead to $C^{1,\beta}$-estimates for problems with translation invariant kernels and finally to existence and regularity of solutions to the nonlinear problem considered. 

In the above mentioned results the constants blow up as $\alpha \nearrow 2$. Robust results have been established for
elliptic equations, e.g. \cite{Kas09}, \cite{dyda-kassmann}, \cite{CaSi11}, \cite{GS12} and recently for fully nonlinear nonlocal parabolic equations in \cite{chang-davila}. See also
\cite{silvestre} for related results in a critical nonlinear setting when $\alpha=1$.

The main features of our contribution can be summarized as follows: Firstly, we prove local regularity results such as a weak Harnack inequality. Secondly, our results are robust for $\alpha \nearrow 2$, i.e. the constants in our main theorems do not depend on $\alpha \in (\alpha_0,2)$. Note that for fixed $\alpha$ both \autoref{thm:harnack} and \autoref{thm:hoelder_main} reflect the intrinsic scaling property of the underlying operator. Thirdly, we allow for a general class of kernels $k_t(x,y)$. In particular, we do not impose pointwise conditions on $k_t(x,y)$ in an essential way. Our article differs from \cite{caffarelli-chan-vasseur} with regard to these three aspects. 

The following two examples illustrate two of these aspects; \autoref{bsp:sequence-of-kernels} illustrates the robustness for $\alpha \nearrow 2$. \autoref{bsp:kegel} shows that $k_t(x,y)$ may be zero on a large set around the diagonal $x=y$.
\begin{bsp}\label{bsp:sequence-of-kernels}
Consider a sequence of kernels $(k^n)_{n \in \N}$ such that $k^n \in \cK(\alpha_0,\Lambda)$ for every $n \in \N$ and some $\alpha_0 \in (0,2)$, $\Lambda \geq 1$ independent of $n \in \N$. For instance $k_t(x,y)$ defined by\footnote{Note that the factor $(2-\alpha_n)$ in \eqref{eq:sequence-of-kernels} is essential to find $\Lambda$ and $\alpha_0$ independent of $n \in \N$.} 
\begin{equation}\label{eq:sequence-of-kernels}
 k^n_t(x,y) = \left(2-\alpha_n\right) \bet{x-y}^{2-\alpha_n} \qquad \text{with } \alpha_n = 2-\tfrac1{n+1}\ 
\end{equation}
belongs to $\cK(1,\Lambda)$ for some $\Lambda=\Lambda(d) \geq 1$.
Let $(u_n)$ be a sequence of solutions to the
corresponding equation \eqref{eq:par_equation}. Then
\eqref{eq:harnack-speziell} holds true for the sequence $(u_n)$ uniformly in $n \in \N$. Furthermore, if $(k^n)$ additionally satisfies \eqref{eq:K-3} uniformly in $n \in \N$ -- such as the kernels in \eqref{eq:sequence-of-kernels} -- then also \eqref{eq:hoelder_main} holds uniformly in $n \in \N$. Note that the theorems are interesting and new
even if $\alpha_n= \alpha$ for some $\alpha \in (0,2)$ and all $n \in \N$.
\end{bsp}
\begin{bsp}\label{bsp:kegel} Fix $\alpha_0 \in (0,2)$. Assume $k_t(x,y)=\frac{2-\alpha}{\bet{x-y}^{d+\alpha}}$ for some $\alpha \in (\alpha_0,2)$. Let $\zeta \in \mathbb{S}^{d-1}$ and $r \in (0,1)$. Set 
\[S= \mathbb{S}^{d-1} \cap \left( B_r(\zeta) \cup B_r(-\zeta) \right) \qquad \text{ and } k_t'(x,y) = k_t(x,y) \mathbbm{1}_{S}(\tfrac{x-y}{\bet{x-y}}).\]
Then we have $k' \in \cK(\alpha_0,\Lambda)$ for some $\Lambda \geq 1$.
\end{bsp}
The article is organized as follows: In \autoref{sec:prelim} we explain the
notion of weak (super-)solutions and discuss the application of this
concept in our setting. In \autoref{sec:mosers_iteration} we perform Moser's
iteration for arbitrary negative exponents of positive supersolutions and for
small positive exponents. \autoref{sec:estimates_log_u} provides estimates
on $\log u$ which are necessary in order to apply the method of Bombieri and
Giusti. The Harnack inequality is proved in \autoref{sec:harnack}. In
\autoref{sec:hoelder} we provide the proof of \autoref{thm:hoelder_main}. In \autoref{sec:A-1} we explain the use of Steklov averages when working with weak solutions of parabolic equations.

\textbf{Acknowledgement:} The authors thank M. Steinhauer and R. Zacher for discussing technical details related to parabolic equations and our approach. We are grateful to an anonymous referee for detailed comments.

\section{Preliminaries}\label{sec:prelim}
The Sobolev space of fractional order $\alpha/2$ is defined by
\begin{align}
H^{\alpha/2}(\Omega) &= \left\{ v \in L^2(\Omega)\colon
\frac{\bet{v(x)-v(y)}}{\bet{x-y}^{(\alpha+d)/2}} \in L^2(\Omega\times \Omega)
\right\}
\intertext{endowed with the norm}
\norm{v}_{H^{\alpha/2}(\Omega)}^2 &= \norm{v}^2_{L^2(\Omega)} + \akon
\iint\limits_{\Omega\, \Omega} \frac{\bet{v(x)-v(y)}^2}{\bet{x-y}^{\alpha+d}} \d x\d y.
\end{align}
We denote by $H^{\alpha/2}_0(\Omega)$ the completion of
$C_c^\infty(\Omega)$
under $\norm{\cdot}_{H^{\alpha/2}(\R^d)}$ and by $H^{-\alpha/2}$ the dual of $H_0^{\alpha/2}$.

By $\inf v$ and $\sup v$ we denote the essential infimum and the essential supremum, respectively, of a given funktion $v$.
\subsection{Concept of weak solutions}
In order to introduce the concept of weak solutions for
(\ref{eq:par_equation}) with $L$ as in (\ref{eq:def_L}) we define a nonlocal
bilinear form associated to $L$ by\footnote{In fact, $\frac12 \cE_t$ is associated to $L$ but we omit the factor $\frac12$ in this work.}
\begin{equation}
 \cE_t(u,v) = \iint\limits_{\R^d\, \R^d} \left[ u(t,y) - u(t,x)\right] \left[
v(t,y)-v(t,x) \right] k_t(x,y) \d x \d y.
\end{equation}
\begin{defi} \label{defi:solution}
Assume $Q=I\times \Omega \subset \R^{d+1}$ and $f \in L^\infty(Q)$. We say that
$u \in L^\infty(I;L^\infty(\R^d))$ is a \emph{supersolution of \eqref{eq:par_equation} in $Q=I \times \Omega$}, if
\begin{enumerate}[(i)]
 \item $u \in C_{loc}(I;L^2_{loc}(\Omega)) \cap L^2_{loc}(I;H^{\alpha/2}_{loc}(\Omega))$,
 \item for every subdomain $\Omega' \Subset \Omega$, for every subinterval $[t_1,t_2]\subset I$ and for every nonnegative test function $\phi \in H^1_{loc}(I;L^2(\Omega')) \cap L^2_{loc}(I;H_0^{\alpha/2}(\Omega'))$,
 \begin{multline} \label{eq:def-solution}
\int_{\Omega'} \phi(t_2,x) u(t_2,x) \d x -\int_{\Omega'} \phi(t_1,x) u(t_1,x) \d x -
\int_{t_1}^{t_2} \int_{\Omega'} u(t,x) \partial_t \phi(t,x) \d x \d t +
\int_{t_1}^{t_2} \cE_t(u,\phi) \d t \\
\geq \int_{t_1}^{t_2} \int_{\Omega'} f(t,x) \phi(t,x) \d x \d t .
\end{multline}
\end{enumerate}
From now on ``$\partial_t u - Lu \geq f$ in $I \times \Omega$`` denotes that $u$ is a supersolution in $I \times \Omega$ in the sense of this definition. Subsolutions and solutions\footnote{In the definition of a solution there is no restriction on the sign of the test function $\phi$.} are defined analogously. 
\end{defi}
The assumptions on $u$ and $\phi$ ensure that all expressions in
\eqref{eq:def-solution} are finite. In order to give sense to $\cE_t(u,\phi)$
the function $\phi$ is extended by $0$. Note that we assume $u$ to be bounded
which can be weakened if takes into account the rate of decay of $k_t(x,y)$ for
large values of $|x-y|$.

If the condition (i) in the \autoref{defi:solution} is weakened by
\[ \text{(i')} \qquad u \in L_{loc}^\infty(I;L^2_{loc}(\Omega)) \cap L^2_{loc}(I;H^{\alpha/2}_{loc}(\Omega)),\]
then one would need to prove $u \in C_{loc}(I;L^2_{loc}(\Omega))$. One possibility is to show that the generalized derivative w.r.t. time of the solution $u$ exists and belongs to $L^2(I';H^{-\alpha/2}(\Omega'))$ for every subinterval $I' \Subset I$ and every subdomain $\Omega' \Subset \Omega$. The conclusion follows from an embedding theorem (for instance \cite[Prop. 23.23]{zeidler}). In the case of (i') the first term in \eqref{eq:def-solution} has to be reinterpreted as
\[ \int_{\Omega'} \phi(t_2,x) u(t_2,x) \d x = \lim_{\Delta t \to 0+} \int_{t_2-\Delta t}^{t_2} \int_{\Omega'} \phi(t,x) u(t,x) \d x \d t \]
and similarly for the second term. 

In the main proofs we do not use \autoref{defi:solution} directly.  The starting point in these proofs is the inequality
\begin{equation*}\label{eq:steklov-solution}
 \int_{\Omega'} \partial_t u(t,x) \phi(t,x) \d x + \cE_t(u(t,\cdot),\phi(t,\cdot)) \geq \int_{\Omega'} f(t,x) \phi(t,x)\d x \qquad \text{for a.e. } t \in I, \tag{\ref*{eq:def-solution}'}
\end{equation*}
where we apply test functions of the form $\phi(t,x)=\psi(x) u^{-q}(t,x)$, $q >0$, where $u$ is a positive supersolution in $I \times \Omega$ and $\psi$ a suitable cut-off function. In particular, we assume that $u$ is a.e. differentiable in time. In \autoref{sec:A-1} we justify this approach.

\subsection{Sobolev and Poincaré inequality}
We provide a Sobolev inequality and a weighted Poincaré inequality for fractional
Sobolev
spaces with constants that are uniform for $\alpha \nearrow 2$:
\begin{prop}[Sobolev inequality]
Let $d \geq 3$ and $\alpha_0 >0$. Then there is a constant $S>0$ such that
for
any $\alpha \in (\alpha_0,2)$, $R>0$, $\sigma= \frac{d}{d-\alpha}$ and $u\in
H^{\alpha/2}(B_R)$ the following
inequality holds:
 \begin{align}\label{eq:sobolev}
  \left( \int_{B_R} \bet{u(x)}^{2 \sigma} \d x
 \right)^{1/\sigma} \leq \akon S \iint\limits_{B_R\, B_R} 
 \frac{\bet{u(x)-u(y)}^2}{\bet{x-y}^{d+\alpha}} \d x \d y + S R^{-\alpha}
\int_{B_R} u^2(x) \d x \,.
\end{align}
\end{prop}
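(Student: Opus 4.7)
The plan is to reduce the local inequality on $B_R$ to a \emph{uniform} global fractional Sobolev inequality on $\R^d$ via scaling and an extension-by-cut-off argument, and to arrange the exponents so that the potentially singular factor $(2-\alpha)^{-1}$ introduced by the cut-off is absorbed by the factor $(2-\alpha)$ produced by the global inequality.

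\textbf{Step 1 (Scaling to $R=1$).} Set $\tilde u(x)=u(Rx)$, so $\tilde u\in H^{\alpha/2}(B_1)$. A direct change of variables gives
\begin{equation*}
\left(\int_{B_1}|\tilde u|^{2\sigma}\right)^{1/\sigma}=R^{-(d-\alpha)}\left(\int_{B_R}|u|^{2\sigma}\right)^{1/\sigma},\qquad [\tilde u]_{\alpha/2,B_1}^2=R^{-(d-\alpha)}[u]_{\alpha/2,B_R}^2,\qquad \|\tilde u\|_{L^2(B_1)}^2=R^{-d}\|u\|_{L^2(B_R)}^2,
\end{equation*}
so the weight $R^{-\alpha}$ in front of the $L^2$-term in the target inequality is exactly what is needed for the scaling exponents on both sides to match. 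It therefore suffices to prove the inequality for $R=1$.

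\textbf{Step 2 (Extension and cut-off).} Fix a cut-off $\eta\in C_c^\infty(B_2)$ with $\eta\equiv 1$ on $B_1$ and $\|\nabla\eta\|_\infty\leq 4$. Using a reflection-extension operator $E\colon H^{\alpha/2}(B_1)\to H^{\alpha/2}(\R^d)$ (whose operator norm is bounded uniformly in $\alpha\in(\alpha_0,2)$), define $v:=\eta\cdot Eu$, which is supported in $B_2$. From the pointwise bound
\begin{equation*}
|v(x)-v(y)|^2\leq 2\eta(x)^2|Eu(x)-Eu(y)|^2+2Eu(y)^2|\eta(x)-\eta(y)|^2
\end{equation*}
and the splitting $|x-y|\lessgtr 1$, one checks that $\iint|\eta(x)-\eta(y)|^2|x-y|^{-d-\alpha}\d x\d y\leq C_0+C_0/(2-\alpha)$; hence
\begin{equation*}
[v]_{H^{\alpha/2}(\R^d)}^2\leq C_1\,[u]_{H^{\alpha/2}(B_1)}^2+\tfrac{C_1}{2-\alpha}\,\|u\|_{L^2(B_1)}^2,
\end{equation*}
with $C_1=C_1(d,\alpha_0)$. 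Moreover $\|v\|_{L^{2\sigma}(\R^d)}\geq\|u\|_{L^{2\sigma}(B_1)}$ since $v=u$ on $B_1$.

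\textbf{Step 3 (Uniform global Sobolev inequality).} The main analytic ingredient is
\begin{equation*}
\|v\|_{L^{2\sigma}(\R^d)}^2\leq S_0\,(2-\alpha)\,[v]_{H^{\alpha/2}(\R^d)}^2\qquad\text{for all }v\in H^{\alpha/2}(\R^d),
\end{equation*}
with $S_0=S_0(d,\alpha_0)$ \emph{independent of} $\alpha\in(\alpha_0,2)$. For $\alpha$ in a compact sub-interval of $(0,2)$, this is the classical fractional Sobolev embedding with an $\alpha$-continuous constant. Uniformity up to $\alpha=2$ is the delicate point; I would establish it via Littlewood--Paley decomposition, exploiting that the Fourier symbol $(2-\alpha)|\xi|^\alpha$ is comparable, on each dyadic annulus, to $\max(|\xi|^{\alpha_0},|\xi|^2)$ with bounds uniform in $\alpha$, or alternatively via the Bourgain--Brezis--Mironescu limit and a compactness/continuity argument combined with the classical Sobolev inequality at $\alpha=2$ (which is where $d\geq 3$ is used).

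\textbf{Step 4 (Assembly).} Combining Steps~2 and~3 on $v$,
\begin{equation*}
\|u\|_{L^{2\sigma}(B_1)}^2\leq\|v\|_{L^{2\sigma}(\R^d)}^2\leq S_0(2-\alpha)\Bigl(C_1[u]_{H^{\alpha/2}(B_1)}^2+\tfrac{C_1}{2-\alpha}\|u\|_{L^2(B_1)}^2\Bigr)=S_0C_1(2-\alpha)[u]_{H^{\alpha/2}(B_1)}^2+S_0C_1\|u\|_{L^2(B_1)}^2,
\end{equation*}
which is the claim for $R=1$ with $S:=S_0C_1$. Step~1 then restores general $R>0$.

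The main obstacle is Step~3: producing a fractional Sobolev constant that stays bounded as $\alpha\nearrow 2$. The crucial observation is that the singular prefactor $(2-\alpha)^{-1}$ inevitably generated by the cut-off in Step~2 cancels exactly the factor $(2-\alpha)$ from Step~3, so the whole argument hinges on tracking this cancellation carefully.
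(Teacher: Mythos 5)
Your plan takes a genuinely different route from the paper's. The paper's proof is essentially one line: it invokes an adaptation of Theorem~1 of Bourgain--Brezis--Mironescu, which is already a \emph{local} Sobolev inequality on $B_1$ with the factor $\tfrac{1-\alpha/2}{d-\alpha}$ built in, then observes $\tfrac{1}{d-\alpha}\leq\tfrac{1}{d-2}$ and rescales. No extension, no cut-off, no global inequality. You instead extend $u$ to $\R^d$, cut off, and appeal to a \emph{global} fractional Sobolev inequality on $\R^d$. Your scaling computation (Step~1), your cut-off estimate and its $(2-\alpha)^{-1}$ contribution (Step~2), and the final assembly (Step~4) are all correct, and the $(2-\alpha)\cdot(2-\alpha)^{-1}$ cancellation you emphasize is indeed the crux.

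The gap is that the plan rests on two uniformity claims that are asserted rather than proved, and the second of them is the same difficulty in a different guise. Claim (i): an extension operator $E\colon H^{\alpha/2}(B_1)\to H^{\alpha/2}(\R^d)$ with operator norm bounded uniformly over $\alpha\in(\alpha_0,2)$. This is believable but requires tracking the $\alpha$-dependence through a Stein- or reflection-type construction (a constant function on $B_1$ has zero seminorm, so the extension bound on the seminorm must import the $L^2$ norm, and the constant in that exchange must be checked not to degenerate as $\alpha\nearrow 2$). Claim (ii): the global inequality $\|v\|^2_{L^{2\sigma}(\R^d)}\leq S_0(2-\alpha)[v]^2_{H^{\alpha/2}(\R^d)}$ with $S_0=S_0(d,\alpha_0)$. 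This is precisely the analytic content of the Bourgain--Brezis--Mironescu-type result the paper cites directly, so relying on it is not a shortcut, and the two strategies you sketch for it are heuristics rather than proofs; in particular, a continuity-plus-compactness argument does not by itself give a uniform bound on the half-open interval $(\alpha_0,2)$ without an explicit statement at $\alpha=2$ and a quantitative continuity estimate. The paper avoids both issues by staying local on $B_1$ throughout. Your route is viable but longer, and the hardest step in it is deferred rather than resolved.
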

\begin{proof} By adaptation of \cite[Theorem 1]{bourgain} to our situation we
obtain for $v
\in H^{\alpha/2}(B_1(0))$
\[ \norm{v}_{L^{2d/(d-\alpha)}(B_1)}^2 \leq c(d) \frac{1-\alpha/2}{d-\alpha}
\iint\limits_{B_1\, B_1} \frac{\bet{v(x)-v(y)}^2}{\bet{x-y}^{d+\alpha}} \d x \d y +
\norm{v}_{L^2(B_1)}^2, \]
which proves \eqref{eq:sobolev} in the case $R=1$, since $\frac{1}{d-\alpha}
\leq (d-2)^{-1}$. The result for general $R>0$ follows after a change of variables.
\end{proof}

\begin{bem}
We provide the proof of \autoref{thm:harnack} only for the case $d \geq 3$.
The assertion stays true if $d \in \{1,2\}$. Tn this case one would use a
Sobolev inequality of the form \eqref{eq:sobolev} with
$\sigma=\frac{3}{3-\alpha}$ which is true for a bounded range of radii $R$.
Moreover, one would need to set $\kappa = 1 + \frac{\alpha}{3}$ in
\autoref{prop:step-neg} and \autoref{prop:step-pos}.
\end{bem}

In order to derive estimates on $\log u$ in \autoref{sec:estimates_log_u} we
will need a weighted Poincaré inequality of fractional order on $B_{3/2}$.

\begin{prop}[Weighted Poincaré inequality] \label{lem:poincare} Let $\Psi \colon {B_{3/2}} \to [0,1]$ be defined by $\Psi(x)=(\tfrac32-\bet{x}) \wedge 1$ and $k \in \cK(\alpha_0,\Lambda)$ for some $\alpha_0 \in (0,2)$ and $\Lambda\geq 1$. Then there is a positive constant $c_2(d,\alpha_0, \Lambda)$ such that for every $v \in L^1(B_{3/2}, \Psi(x)\d x)$
\begin{equation*}
 \int_{B_{3/2}} \left[ v(x)-v_{\Psi} \right]^2 \Psi(x) \d x 
 \leq c_2 \iint\limits_{B_{3/2}\, B_{3/2}} \left[ v(x)-v(y)\right]^2 k_t(x,y) \left( \Psi(x) \wedge \Psi(y) \right) \d x \d y\, ,
\end{equation*}
where $\displaystyle v_{\Psi}=  \Bigl(\int_{B_{3/2}} \Psi(x)\d x\Bigr)^{-1} \int_{B_{3/2}} v(x) \Psi(x)\d x$.
\end{prop}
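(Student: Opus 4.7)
The plan has three steps: (i) use \eqref{eq:K-2} together with a layer-cake decomposition of the weight to reduce to the explicit kernel $(2-\alpha)|x-y|^{-d-\alpha}$; (ii) cover $B_{3/2}$ by Whitney-type balls on which $\Psi$ is essentially constant and invoke a robust local fractional Poincaré on each; (iii) chain the local averages to a single reference average via a sequence of overlapping Whitney balls.

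For the reduction, the key observation is that the super-level sets of $\Psi$ are balls centred at the origin: $\{\Psi > s\} = B_{3/2-s}$ for $s \in (0, 1)$. The layer-cake formula then gives $\Psi(x)\wedge\Psi(y) = \int_0^1 \mathbbm{1}_{B_{3/2-s}}(x)\mathbbm{1}_{B_{3/2-s}}(y)\,\d s$. Applying the right half of \eqref{eq:K-2} on each ball $B_{3/2-s}$ (whose radius lies in $(1/2,3/2) \subset (0,2)$) and integrating in $s$ yields the weighted comparison
\[
(2-\alpha) \iint \frac{(v(x)-v(y))^2}{|x-y|^{d+\alpha}} (\Psi(x)\wedge\Psi(y)) \d x \d y \le \Lambda \iint (v(x)-v(y))^2 k_0(x,y) (\Psi(x)\wedge\Psi(y)) \d x \d y.
\]
Since $k_t \ge \tfrac12 k_0$, it then suffices to prove the proposition with $k_t$ replaced by the explicit kernel $(2-\alpha)|x-y|^{-d-\alpha}$.

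Next I would cover $B_{3/2}$ by a Whitney family $\{B_i = B(x_i,\rho_i)\}$ with $\rho_i \asymp \dist(x_i,\partial B_{3/2}) \asymp \Psi(x_i)$ and finite overlap of the doubled balls $\{2 B_i\}$, and fix a reference ball $B_* \subset B_{1/2}$ on which $\Psi \equiv 1$. On each $B_i$, the robust fractional Poincaré inequality (which follows from the Sobolev inequality already stated in the excerpt via Hölder and an absorption argument, with constant independent of $\alpha$) reads $\int_{B_i}(v-v_{B_i})^2 \d x \le C\rho_i^\alpha(2-\alpha)\iint_{B_iB_i}(v-v)^2 |x-y|^{-d-\alpha}\d x \d y$. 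Multiplying by $\sup_{B_i}\Psi \asymp \rho_i$, using $\Psi(x)\wedge\Psi(y)\asymp\rho_i$ on $B_i\times B_i$ and $\rho_i\le 3/2$ to bound $\rho_i^\alpha$, one obtains the local weighted estimate
\[
\int_{B_i}(v-v_{B_i})^2 \Psi \d x \le C (2-\alpha) \iint_{B_iB_i} \frac{(v(x)-v(y))^2(\Psi(x)\wedge\Psi(y))}{|x-y|^{d+\alpha}} \d x \d y.
\]

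To globalise, I would use that $v_\Psi$ minimises $c \mapsto \int (v-c)^2 \Psi\,\d x$, so $\int(v-v_\Psi)^2 \Psi \d x \le \int(v-v_{B_*})^2 \Psi \d x$, and expand $(v-v_{B_*})^2 \le 2(v-v_{B_i})^2 + 2(v_{B_i}-v_{B_*})^2$ to split the estimate into a local piece (controlled by the previous step and finite overlap) and a chain piece $\sum_i \Psi(B_i)|v_{B_i}-v_{B_*}|^2$. For the latter, connect each $B_i$ to $B_*$ by a chain of overlapping Whitney balls $B_*=B_{i_0},\ldots,B_{i_{N_i}}=B_i$ of length $N_i = O(\log(1/\rho_i))$ with geometrically decreasing radii; Cauchy--Schwarz together with the local weighted Poincaré on each overlapping pair gives $|v_{B_{i_k}}-v_{B_{i_{k+1}}}|^2 \lesssim \rho_{i_k}^{-d-1}(2-\alpha)\iint_{(B_{i_k}\cup B_{i_{k+1}})^2}(v-v)^2(\Psi\wedge\Psi)|x-y|^{-d-\alpha}\d x\d y$. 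Reordering the double sum by chain link and applying a standard multiplicity bound for Whitney chains closes the estimate. The main obstacle will be this final chaining step: one must carefully track how many chains pass through each link, absorb the logarithmic length $N_i$ and the singular factor $\rho^{-d-1}$ against the weight $\Psi(B_i)\asymp\rho_i^{d+1}$, and verify that the resulting constant depends only on $d$, $\alpha_0$ and $\Lambda$ with no residual dependence on $\alpha \in (\alpha_0,2)$.
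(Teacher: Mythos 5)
Your first step coincides with the paper's: the layer-cake decomposition of the weight combined with \eqref{eq:K-2} to reduce the statement to the explicit kernel $(2-\alpha)|x-y|^{-d-\alpha}$. In fact, your identity $\Psi(x)\wedge\Psi(y)=\int_0^1 \Ind{B_{3/2-s}}(x)\Ind{B_{3/2-s}}(y)\,\d s$ is exact and slightly cleaner than the formula $\Psi(x)=2\int_1^{3/2}\Ind{B_s}(x)\,\d s$ the paper uses (which is only stated for $x\in B_{3/2}\setminus\overline{B_1}$ and has a factor-of-$2$ mismatch when combined across the two regions, though this is immaterial for an inequality). You are also right to insert $k_t\geq\tfrac12 k_0$ before invoking \eqref{eq:K-2}, a point the paper glosses over.

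Where you and the paper part ways is in the treatment of the resulting weighted Poincar\'e inequality for the pure fractional kernel. The paper disposes of it in one line by citing \cite[Corollary 6]{DK12}, whereas you propose to prove it from scratch via a Whitney covering of $B_{3/2}$ with balls $B_i$ of radius $\rho_i\asymp\Psi(x_i)$, local $\alpha$-robust fractional Poincar\'e on each $B_i$, and a chaining argument back to a reference ball $B_*\subset B_{1/2}$. This route is the standard self-contained proof of such weighted inequalities and should work, but it is considerably longer, and the delicate part is exactly where you flag it: after Cauchy--Schwarz along chains of length $N_i=O(\log(1/\rho_i))$, one must check that the logarithmic factors and the chain-multiplicity bound combine with the weights $\Psi(B_i)\asymp\rho_i^{d+1}$ to give an $\alpha$-independent constant, which you have not carried out. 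A second (minor) loose end is the derivation of the local $\alpha$-robust Poincar\'e with the factor $(2-\alpha)$ in front of the Gagliardo seminorm; the crude argument using $|x-y|\leq 2\rho$ gives a constant blowing up like $(2-\alpha)^{-1}$, so one genuinely needs the Sobolev-plus-H\"older-plus-absorption route (or a direct computation on spherical shells) as you indicate, but this is not spelled out. In short: the reduction step matches the paper, and the remainder of your proposal substitutes a longer self-contained chaining argument for the paper's citation of an external result; the outline is plausible, but the constant-tracking in the chaining step would need to be completed to have a proof.
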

\begin{proof}
For $ x\in B_{3/2} \setminus \overline{B_1}$ write
$\Psi(x) = 2 \int_{1}^{3/2} \Ind{B_s}(x) \d s$. Then for some $\alpha \in (\alpha_0,2)$
\allowdisplaybreaks
\begin{align*}
 \int_{B_{3/2}} \int_{B_{3/2}}& \left[ v(x)-v(y)\right]^2 k_t(x,y) \left( \Psi(x) \wedge \Psi(y) \right) \d x \d y \\
 &= \int_{B_{3/2}} \int_{B_{3/2}} \left[ v(x)-v(y)\right]^2 k_t(x,y) 2 \int_{1}^{3/2} \Ind{B_s}(x) \Ind{B_s}(y) \d s \d x \d y \\
 &= 2 \int_{1}^{3/2} \int_{B_{3/2}} \int_{B_{3/2}} \left[ v(x)-v(y)\right]^2 k_t(x,y) \Ind{B_s}(x) \Ind{B_s}(y) \d x \d y \d s \\
 &\geq 2 \Lambda^{-1} (2-\alpha) \int_{1}^{3/2} \int_{B_{3/2}} \int_{B_{3/2}} \frac{\left[ v(x)-v(y)\right]^2}{\bet{x-y}^{d+\alpha}}  \Ind{B_s}(x) \Ind{B_s}(y) \d x \d y \d s \\
 &= \Lambda^{-1} (2-\alpha) \int_{B_{3/2}} \int_{B_{3/2}} \frac{\left[ v(x)-v(y)\right]^2}{\bet{x-y}^{d+\alpha}} \left( \Psi(x) \wedge \Psi(y) \right) \d x \d y\ ,
\end{align*}
where we have applied \eqref{eq:K-2} to obtain the inequality. The assertion of \autoref{lem:poincare} follows now immediately from \cite[Corollary 6]{DK12}.
\end{proof}

\subsection{Scaling property and standard cylindrical domains}
Let us briefly explain the scaling behaviour of equation
(\ref{eq:par_equation}). Here and later in the article we will use the following
notation. Define
for $r>0$ $B_r(x_0) =\left\{ x \in \R^d \colon \bet{x-x_0} < r \right\}$ and
\begin{align*}
I_r(t_0)&=(t_0-r^\alpha,t_0+r^\alpha),&Q_r(x_0,t_0)&= I_r(t_0) \times B_r(x_0),&\\
I_{\oplus}(r) &= (0,r^\alpha),& Q_\oplus(r) &= I_{\oplus}(r) \times B_r(0),\\
I_{\ominus}(r) &= (-r^\alpha,0),& Q_\ominus(r) &= I_{\ominus}(r) \times B_r(0).
\end{align*}
\begin{figure}[h]
\centering
\includegraphics{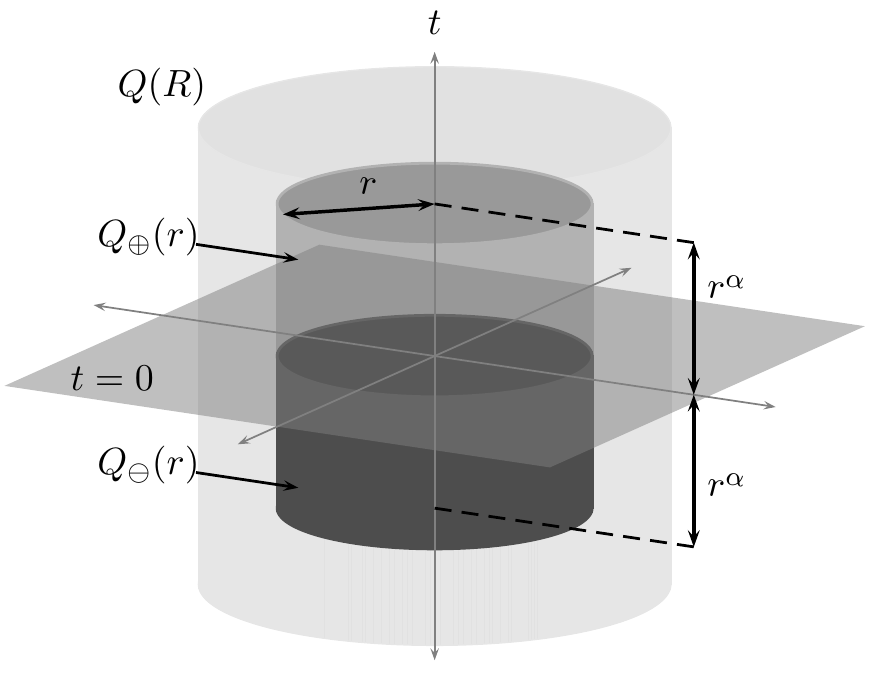}
\caption{Standard cylindrical domains}
\end{figure}
\begin{lem}[Scaling property] \label{lem:scaling} 
Fix $\alpha_0 \in (0,2)$, $\xi \in \R^d$, $\tau \in \R$ and $r>0$. Assume that there is $\alpha \in (\alpha_0,2)$ and $\Lambda \geq \max(1,\alpha_0^{-1})$ such that the kernel $k_t(x,y)=a(t,x,y) k_0(x,y)$ satisfies the following properties: for every $x_0 \in \R^d$, $\rho \in (0,2r)$ and $v \in H^{\alpha/2}(B_{\rho}(x_0))$
\begin{align*}
 \rho^{-2} \int_{\bet{x_0-y}\leq \rho} \bet{x_0-y}^2  k_0(x_0,y) \d y + \int_{\bet{x_0-y}>\rho}  k_0(x_0,y) \d y &\leq \Lambda \rho^{-\alpha},\\
\Lambda^{-1}  \iint\limits_{B\,B} \left[ v(x)-v(y)\right]^2  k_0(x,y) \d x \d y &\leq 
 (2-\alpha) \iint\limits_{B\,B}   \frac{\left[ v(x)-v(y)\right]^2}{\bet{x-y}^{d+\alpha}} \d x \d y \\
  \leq \Lambda  \iint\limits_{B\,B}   \left[ v(x)-v(y)\right]^2 &k_0(x,y) \d x \d y,\quad \text{where }B=B_{\rho}(x_0),\\
\sup_{x \in B_{2r}(\xi)} \int_{\R^d\setminus B_{3r}(\xi)} \bet{y}^{1/\Lambda}  k_0(x,y) \d y &\leq \Lambda\ r^{1/\Lambda-\alpha}.
\end{align*}
Let $u$ be a supersolution of the corresponding equation \eqref{eq:par_equation} in $Q \Supset Q_r(\xi,\tau)$. \\ Then $\widetilde u(t,x)=u(r^\alpha t+\tau,rx+\xi)$ satisfies the following inequality for every nonnegative \\
 $\phi \in H^1((-1,1);L^2(B_1(0))) \cap L^2((-1,1);H_0^{\alpha/2}(B_1(0)))$
 \begin{multline*}  \int_{B_1} \phi(t,x) \widetilde u(t,x) \d x \Bigr|_{t=-1}^1 -
\int_{Q_1(0)} \widetilde u(t,x) \partial_t \phi(t,x) \d x \d t\\ +
\int_{-1}^1 \iint\limits_{\R^d\, \R^d} \left[ \widetilde u(t,x)- \widetilde u(t,y)\right] \left[ \phi(t,x)-\phi(t,y)\right] \widetilde k_t(x,y) 
\d x \d y \d t 
 \geq \int_{Q_1(0)} r^\alpha \widetilde f(t,x) \phi(t,x)\, \d x \d t,
 \end{multline*}
where $\widetilde f(t,x)=f(t r^\alpha+\tau,r x+\xi)$ and 
\[ \widetilde k_t(x,y)=a(r^\alpha t+\tau,rx+\xi,ry+\xi)\, r^{d+\alpha}\, k_0(rx+\xi,ry+\xi).\] In particular, $\widetilde k$ belongs to $\cK'(\alpha_0,\Lambda)$.
\end{lem}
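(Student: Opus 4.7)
The strategy is to test the supersolution inequality for $u$ on the cylinder $Q_r(\xi,\tau)$ against a pulled-back test function and then to undo the change of variables. Given $\phi$ as in the statement, define a test function on $(\tau-r^\alpha,\tau+r^\alpha)\times B_r(\xi)$ by
\begin{equation*}
\varphi(s,y) = \phi\Bigl(\tfrac{s-\tau}{r^\alpha},\tfrac{y-\xi}{r}\Bigr).
\end{equation*}
A straightforward change of variables shows $\varphi\in H^1((\tau-r^\alpha,\tau+r^\alpha);L^2(B_r(\xi)))\cap L^2((\tau-r^\alpha,\tau+r^\alpha);H_0^{\alpha/2}(B_r(\xi)))$ and $\varphi\geq 0$, so that $\varphi$ is admissible in \autoref{defi:solution} applied to $u$ on $Q_r(\xi,\tau)\Subset Q$ with $\Omega'=B_r(\xi)$ and $[s_1,s_2]=[\tau-r^\alpha,\tau+r^\alpha]$.

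Next, substitute $t=(s-\tau)/r^\alpha$, $x=(y-\xi)/r$ (and analogously $x'=(y'-\xi)/r$ in the bilinear term) in every integral of the resulting inequality. The four local-in-$x$ terms each pick up a factor $r^d$ from $\mathrm{d}y\to r^d\mathrm{d}x$, the time integrals pick up $\mathrm{d}s=r^\alpha\mathrm{d}t$, and $\partial_s\varphi$ contributes $r^{-\alpha}\partial_t\phi$; the double-integral bilinear term picks up $r^{2d}$ from the two spatial changes of variables together with $r^\alpha$ from the time change. Dividing the whole inequality by $r^d$ produces exactly the claimed formula with
\begin{equation*}
\widetilde k_t(x,y) = r^{d+\alpha}\,k_{r^\alpha t+\tau}(rx+\xi,ry+\xi), \qquad \widetilde f(t,x)=f(r^\alpha t+\tau, rx+\xi),
\end{equation*}
and with the right-hand side carrying the factor $r^\alpha$. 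Writing $k=a\,k_0$, the factor $a$ stays bounded in $[\tfrac12,1]$ and retains its $x\leftrightarrow y$ symmetry, so $\widetilde k_t(x,y)=\widetilde a(t,x,y)\widetilde k_0(x,y)$ with $\widetilde k_0(x,y)=r^{d+\alpha}k_0(rx+\xi,ry+\xi)$.

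It remains to verify $\widetilde k\in\cK'(\alpha_0,\Lambda)$. For \eqref{eq:K-1} at a point $x_0\in\R^d$ and radius $\rho\in(0,2)$, change variables $y=rx+\xi$, $y_0=rx_0+\xi$ in
\begin{equation*}
\rho^{-2}\int_{|x_0-x|\leq\rho}|x_0-x|^2\,\widetilde k_0(x_0,x)\,\mathrm{d}x + \int_{|x_0-x|>\rho}\widetilde k_0(x_0,x)\,\mathrm{d}x;
\end{equation*}
the radius becomes $r\rho\in(0,2r)$, the Jacobians combine with $r^{d+\alpha}$ so that the bound $\Lambda(r\rho)^{-\alpha}$ assumed on $k_0$ transforms into $\Lambda\rho^{-\alpha}$. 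For \eqref{eq:K-2} apply the same substitution to the bilinear integrals for $v\in H^{\alpha/2}(B_\rho(x_0))$, setting $\tilde v(y)=v((y-\xi)/r)\in H^{\alpha/2}(B_{r\rho}(y_0))$; both sides of the double inequality acquire the same factor $r^{\alpha-d}$ and the hypothesis on $k_0$ on $B_{r\rho}(y_0)$ yields the corresponding estimate for $\widetilde k_0$ on $B_\rho(x_0)$ with the same constant $\Lambda$. Finally for \eqref{eq:K-3}, substitute $y'=ry+\xi$ in $\int_{\R^d\setminus B_3(0)}|y|^{1/\Lambda}\widetilde k_0(x,y)\mathrm{d}y$; the factors $r^{d+\alpha}$ and $r^{-d-1/\Lambda}$ from the Jacobian and from $|y|^{1/\Lambda}=|y'-\xi|^{1/\Lambda}/r^{1/\Lambda}$ combine with the assumed bound $\Lambda\,r^{1/\Lambda-\alpha}$ to give $\Lambda$, as needed.

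The only real subtlety is bookkeeping the scaling exponents in the nonlocal bilinear form; the time-local terms are standard and the verification of \eqref{eq:K-1}--\eqref{eq:K-3} for $\widetilde k_0$ is a mechanical rescaling once one is careful with the difference between $|y|$ and $|y-\xi|$ in the tail condition (which is why the hypothesis in the lemma is formulated around the center $\xi$ of the cylinder rather than around the origin).
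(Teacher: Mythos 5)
The paper states \autoref{lem:scaling} without giving a proof, treating it as a routine computation; your approach (pull the test function back along the affine change of variables, rewrite each term, then rescale \eqref{eq:K-1}--\eqref{eq:K-3}) is the natural and intended one, and the bookkeeping of powers of $r$ in the time-local terms, the bilinear term, and the conditions \eqref{eq:K-1}, \eqref{eq:K-2} is correct.

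There is, however, a small but genuine mismatch in your verification of \eqref{eq:K-3}. After the substitution $y' = ry+\xi$ you correctly obtain
\[
\int_{\R^d\setminus B_3(0)} \bet{y}^{1/\Lambda}\, \widetilde k_0(x,y)\,\d y
= r^{\alpha-1/\Lambda}\int_{\R^d\setminus B_{3r}(\xi)} \bet{y'-\xi}^{1/\Lambda}\, k_0(x',y')\,\d y' ,
\]
so the transformed weight is $\bet{y'-\xi}^{1/\Lambda}$, whereas the hypothesis of the lemma as stated bounds $\sup_{x'\in B_{2r}(\xi)} \int_{\R^d\setminus B_{3r}(\xi)} \bet{y'}^{1/\Lambda}\,k_0(x',y')\,\d y'$, i.e.\ with the weight centered at the origin. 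These are not the same unless $\xi=0$, so your concluding sentence ``combine with the assumed bound $\Lambda\,r^{1/\Lambda-\alpha}$ to give $\Lambda$'' does not literally close as written. Your own parenthetical remark at the end shows you noticed the issue but apparently misread the hypothesis: for the computation to close exactly as you intend, the weight in the lemma's hypothesis should be $\bet{y-\xi}^{1/\Lambda}$, which is almost certainly what was meant (and is what the $B_{3r}(\xi)$ exclusion domain suggests). Since the only place the lemma is invoked in the paper is with $\xi=0$ (cf. the proof of \autoref{thm:hoelder_main}, where $\overline u(t,x)=u(\eta^\alpha t+s,\eta x+y)$ is considered on $\widehat D(1)$ with the ball centered at $0$), the discrepancy is harmless in context, but a careful write-up should either correct the hypothesis or insert a comparison between $\bet{y-\xi}$ and $\bet{y}$ on $\R^d\setminus B_{3r}(\xi)$.
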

%
\section{Moser's iteration for positive
supersolutions}\label{sec:mosers_iteration}

In this section we provide local estimates of negative powers and
small positive powers of supersolutions $u$. In the first subsection we
collect computation rules which we need. Then we prove the basic step in
Moser's iteration process for negative powers which leads to a first estimate
of $\inf\limits u$. Last we estimate small positive powers of positive
supersolutions $u$. 

Throughout this section we assume that the kernel $k$ in \eqref{eq:par_equation} belongs to some $\cK(\alpha_0,\Lambda)$.
\subsection{Some algebraic inequalities}
The second-named author thanks I. Popescu for the following proposition.
\begin{prop} \label{prop:mittelwert}
Let $f,g \in C^1([a,b])$. Then 
\begin{equation} \label{eq:mittelwert}
 \frac{f(b)-f(a)}{b-a}+ \left( \frac{g(b)-g(a)}{b-a} \right)^2 \leq \max_{t \in
[a,b]} \left[ f'(t) + (g'(t))^2 \right].
\end{equation}
\end{prop}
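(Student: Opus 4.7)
The plan is to avoid the naive mean-value-theorem approach (which fails because the points realizing the two difference quotients generally differ) and instead rewrite both quantities as averages of the derivatives over $[a,b]$, then exploit the convexity of the square.

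First I would apply the fundamental theorem of calculus to write
\begin{equation*}
\frac{f(b)-f(a)}{b-a} = \frac{1}{b-a}\int_a^b f'(t)\d t, \qquad \frac{g(b)-g(a)}{b-a} = \frac{1}{b-a}\int_a^b g'(t)\d t.
\end{equation*}
Squaring the second identity and applying Jensen's inequality for the convex function $x \mapsto x^2$ (equivalently, Cauchy--Schwarz) yields
\begin{equation*}
\left( \frac{g(b)-g(a)}{b-a} \right)^2 \leq \frac{1}{b-a}\int_a^b (g'(t))^2 \d t.
\end{equation*}

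Adding this to the identity for the first quotient, I obtain
\begin{equation*}
\frac{f(b)-f(a)}{b-a} + \left( \frac{g(b)-g(a)}{b-a} \right)^2 \leq \frac{1}{b-a}\int_a^b \bigl[ f'(t) + (g'(t))^2 \bigr]\d t,
\end{equation*}
and the right-hand side is bounded above by $\max_{t \in [a,b]}[f'(t)+(g'(t))^2]$ since the average of a continuous function over $[a,b]$ does not exceed its maximum. This gives \eqref{eq:mittelwert}.

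I do not anticipate any real obstacle: the only subtle point is recognizing that one should not try to treat the two quotients separately by the mean value theorem, but rather keep them together under a single integral so that convexity can be applied to the square of the $g'$-average before combining with the $f'$-average.
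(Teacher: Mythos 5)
Your proof is correct and is essentially the direct form of the paper's contradiction argument: both use the fundamental theorem of calculus to express the difference quotients as averages over $[a,b]$, then apply Jensen's inequality to the square of the $g'$-average and bound the resulting average by the maximum. The paper instead assumes the reverse inequality holds at every $t$, integrates, cancels the $f$-terms, and arrives at a violation of Jensen's inequality, but the chain of ideas is the same.
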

\begin{proof}
Assume that \eqref{eq:mittelwert} was not true and integrate the reversed
inequality over $[a,b]$ resulting in 
\[ f(b) - f(a) + \frac{ (g(b)-g(a))^2}{b-a} > f(b)-f(a) + \int_a^b (g'(t))^2 \d
t,\]
which is equivalent to
\[ \left( \frac{ g(b)-g(a)}{(b-a)}\right)^2 > \frac{1}{b-a} \int_a^b (g'(t))^2
\d t.\]
This is a contradiction (Jensen's inequality) and hence
\autoref{prop:mittelwert} is proved.
\end{proof}
\begin{lem} \label{lem:convex}
Let $q>0, q \neq 1$ and $a,b > 0$. Then
\begin{equation} \label{eq:q-ineq}
 (b-a)(a^{-q}-b^{-q}) \geq \frac{4q}{(1-q)^2} \left(
b^{\frac{1-q}{2}}-a^{\frac{1-q}{2}}\right)^2.
\end{equation}
\end{lem}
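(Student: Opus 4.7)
The plan is to prove the inequality by applying the Cauchy--Schwarz inequality to two integrals on $[a,b]$.

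First, I would observe that both sides of \eqref{eq:q-ineq} are invariant under swapping $a$ and $b$: on the left, each of the two factors changes sign so the product is preserved, and on the right, the square kills the sign. Therefore I may assume without loss of generality that $0 < a < b$, and the case $a = b$ is trivial since both sides vanish.

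Next, I would rewrite both factors on the left-hand side as integrals over $[a,b]$. Clearly $b - a = \int_a^b 1 \, dt$, and by the fundamental theorem of calculus,
\begin{equation*}
 a^{-q} - b^{-q} = \int_a^b q\, t^{-q-1}\, dt.
\end{equation*}
Then I apply the Cauchy--Schwarz inequality to the pair of functions $1$ and $\sqrt{q\, t^{-q-1}}$, obtaining
\begin{equation*}
 (b-a)(a^{-q}-b^{-q}) = \int_a^b 1 \, dt \cdot \int_a^b q\, t^{-q-1}\, dt \geq \left( \int_a^b \sqrt{q}\, t^{-(q+1)/2}\, dt \right)^2.
\end{equation*}

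Finally, using $q \neq 1$ to justify the integration, I compute
\begin{equation*}
 \int_a^b t^{-(q+1)/2}\, dt = \frac{2}{1-q}\left( b^{(1-q)/2} - a^{(1-q)/2} \right),
\end{equation*}
and squaring together with the factor $q$ gives exactly $\frac{4q}{(1-q)^2}\bigl(b^{(1-q)/2} - a^{(1-q)/2}\bigr)^2$, which yields \eqref{eq:q-ineq}. There is no serious obstacle here; the only subtle point worth noting is that the sign of $1-q$ is irrelevant because the result is squared, so a single argument handles both $q \in (0,1)$ and $q > 1$.
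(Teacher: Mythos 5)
Your proof is correct, and it takes a genuinely different (and more direct) route than the paper's. The paper reduces \eqref{eq:q-ineq} to an auxiliary ``mean-value'' statement, \autoref{prop:mittelwert}, asserting that $\frac{f(b)-f(a)}{b-a}+\bigl(\frac{g(b)-g(a)}{b-a}\bigr)^2 \le \max_{[a,b]}\bigl[f'+(g')^2\bigr]$, proves that proposition by contradiction plus Jensen's inequality, and then applies it to the specific pair $f(t)=\frac{(1-q)^2}{4q}\,t^{-q}$, $g(t)=t^{(1-q)/2}$, chosen precisely so that $f'+(g')^2\equiv 0$. You instead represent both factors $(b-a)$ and $(a^{-q}-b^{-q})$ as integrals over $[a,b]$ and apply Cauchy--Schwarz to the pair $1$ and $\sqrt{q\,t^{-q-1}}$ (legitimate since $q>0$, $t>0$), which yields the result in one step after evaluating $\int_a^b t^{-(q+1)/2}\,dt$. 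At bottom the two arguments hinge on the same convexity fact (Jensen for the square function is exactly Cauchy--Schwarz against the constant function), but yours bypasses the general proposition entirely and makes the constant $\frac{4q}{(1-q)^2}$ appear as an honest computation rather than a choice engineered to make a maximum vanish; the paper's route, on the other hand, packages the mechanism as a reusable lemma. Your symmetry reduction to $a<b$ and the observation that the sign of $1-q$ is irrelevant after squaring are both accurate.
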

\begin{proof}
Setting $c(q)=\frac{(1-q)^2}{4q}$, \eqref{eq:q-ineq} is equivalent to
\[ c(q) \frac{b^{-q}-a^{-q}}{b-a} + \frac{ \left(
b^{\frac{1-q}{2}}-a^{\frac{1-q}{2}}\right)^2}{(b-a)^2} \leq 0.\]
\autoref{prop:mittelwert} with
$f(t)=c(q) t^{-q}$ and $g(t)=t^{\frac{1-q}{2}}$ yields
\[  c(q) \frac{b^{-q}-a^{-q}}{b-a} + \frac{ \left(
b^{\frac{1-q}{2}}-a^{\frac{1-q}{2}}\right)^2}{(b-a)^2} \leq 
 \max_{t \in [a,b]} t^{-1-q} \left( -\frac{(1-q)^2}{4} + \frac{(1-q)^2}{4}
\right) = 0,
\]
which proves inequality \eqref{eq:q-ineq}.
\end{proof}
Part (i) of the following lemma is taken from \cite[Lemma
2.5]{Kas09}.
\begin{lem}\label{lem:guelle}{\ } 
\begin{enumerate}[(i)]
 \item Let $q>1$, $a,b > 0$ and $\tau_1, \tau_2 \geq 0$. Set $\vartheta(q) =\max\left\{ 4, \frac{6q-5}{2} \right\}$. Then
\begin{multline}\label{eq:guelle-1}
 (b-a) \left( \tau_1^{q+1} a^{-q} - \tau_2^{q+1} b^{-q} \right) \geq
\frac{1}{q-1} (\tau_1 \tau_2) \left[ \left(
\frac{b}{\tau_2}\right)^{\frac{1-q}{2}} - \left(
\frac{a}{\tau_1}\right)^{\frac{1-q}{2}} \right]^2 \\
 - \vartheta(q) \left( \tau_2 -\tau_1\right)^2
\left[ \left( \frac{b}{\tau_2}\right)^{1-q} + \left(
\frac{a}{\tau_1}\right)^{1-q} \right].
\end{multline}
Since $1-q<0$ the division by $\tau_1 = 0$ or $\tau_2 = 0$ is allowed. 
\item Let $q\in (0,1)$, $a,b > 0$ and $\tau_1, \tau_2 \geq 0$. Set $\zeta(q)=\frac{4q}{1-q}$, $\zeta_1(q)=\frac16 \zeta(q)$ and $\zeta_2(q)=\zeta(q)+ \frac{9}{q}$. Then
\begin{multline}\label{eq:guelle-2}
(b-a)\left( \tau_1^2 a^{-q} - \tau_2^2 b^{-q} \right) \geq \zeta_1(q) \left( \tau_2 b^{\frac{1-q}{2}} - \tau_1 a^{\frac{1-q}{2}} \right)^2 
- \zeta_2(q) (\tau_2-\tau_1)^2 \left( b^{1-q} + a^{1-q} \right)
\end{multline}
\end{enumerate}
\end{lem}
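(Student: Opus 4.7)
Part (i) is simply quoted from \cite[Lemma 2.5]{Kas09}, so there is nothing to prove; I focus on part (ii). My plan is to use a one-parameter interpolation. Set $c(s) := (1-s)a + sb$ and $\tau(s) := (1-s)\tau_1 + s\tau_2$ for $s \in [0,1]$, and introduce $I := \int_0^1 \tau(s)^2 c(s)^{-1-q}\, ds$, $J := \int_0^1 \tau(s) c(s)^{-q}\, ds$ and $K := \int_0^1 c(s)^{1-q}\, ds$. Two elementary estimates will be used throughout: Cauchy-Schwarz applied to the factorisation $\tau c^{-q} = \bigl(\tau c^{-(1+q)/2}\bigr)\cdot c^{(1-q)/2}$ gives $J^2 \leq I K$, and $c(s) \leq \max(a,b)$ gives $K \leq a^{1-q}+b^{1-q}$.

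Now I derive two identities. First, by the fundamental theorem of calculus applied to $s \mapsto \tau(s)^2 c(s)^{-q}$ and multiplication by $(b-a)$, one obtains $(b-a)(\tau_1^2 a^{-q} - \tau_2^2 b^{-q}) = q(b-a)^2 I - 2(\tau_2-\tau_1)(b-a)J$. Second, setting $G(s) := \tau(s) c(s)^{(1-q)/2}$, applying $(\int G')^2 \leq \int (G')^2$ together with $(x+y)^2 \leq 2x^2+2y^2$ to $G'(s) = (\tau_2-\tau_1)c^{(1-q)/2} + \tfrac{1-q}{2}(b-a)\tau c^{-(1+q)/2}$ gives $(\tau_2 b^{(1-q)/2} - \tau_1 a^{(1-q)/2})^2 \leq 2(\tau_2-\tau_1)^2 K + \tfrac{(1-q)^2}{2}(b-a)^2 I$.

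Combining these, \eqref{eq:guelle-2} reduces to $\bigl(q - \zeta_1 \tfrac{(1-q)^2}{2}\bigr)(b-a)^2 I - 2(\tau_2-\tau_1)(b-a)J + \bigl[\zeta_2(a^{1-q}+b^{1-q}) - 2\zeta_1 K\bigr](\tau_2-\tau_1)^2 \geq 0$. With $\zeta_1 = \tfrac{2q}{3(1-q)}$ the first coefficient equals $\tfrac{q(2+q)}{3}$. Now $J \leq \sqrt{IK}$ combined with Young's inequality (parameter $\eta = q/6$) yields $2|\tau_2-\tau_1|(b-a)J \leq 2|\tau_2-\tau_1|(b-a)\sqrt{IK} \leq \tfrac{q}{6}(b-a)^2 I + \tfrac{6}{q}(\tau_2-\tau_1)^2 K$. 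After this absorption the remaining coefficient of $(b-a)^2 I$ is $\tfrac{q(2+q)}{3} - \tfrac{q}{6} = \tfrac{q(2q+3)}{6} > 0$, and, using $K \leq a^{1-q}+b^{1-q}$, the remaining coefficient of $(\tau_2-\tau_1)^2$ is at least $\bigl(\zeta_2 - 2\zeta_1 - \tfrac{6}{q}\bigr)(a^{1-q}+b^{1-q}) = \bigl(\tfrac{8q}{3(1-q)} + \tfrac{3}{q}\bigr)(a^{1-q}+b^{1-q}) > 0$, so the inequality follows.

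The main obstacle is decoupling the mixed differences $(b-a)$ and $(\tau_2-\tau_1)$. A naive algebraic split like $\tau_1^2 a^{-q} - \tau_2^2 b^{-q} = \tau_1\tau_2(a^{-q}-b^{-q}) + (\tau_1-\tau_2)(\tau_1 a^{-q}+\tau_2 b^{-q})$ combined with \autoref{lem:convex} runs into trouble because $(b-a)(\tau_1 a^{-q}+\tau_2 b^{-q})$ admits no clean pointwise bound when $a$ is small; the three integrals above, tied together by $J^2 \leq IK$, circumvent this, and the particular constants $\zeta_1 = \zeta/6$ and $\zeta_2 = \zeta + 9/q$ are exactly those required for Young's inequality to close the argument with $\eta = q/6$.
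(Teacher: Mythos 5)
Your proof of part (ii) is correct, and it takes a genuinely different route from the paper. The paper first dispatches the degenerate cases $\tau_2 = 0$ and $\tau_1 = 0 < \tau_2$, then for $\tau_1\tau_2 > 0$ passes to the two reduced variables $t = b/a \geq 1$ and $s = \tau_2/\tau_1 > 0$, bounds $\left(\sqrt{\lambda t}-1\right)^2$ via \autoref{lem:convex}, decomposes $1 - t^{-q}$, and then carries out a four-way case analysis on the ranges of $(t,s)$ just to control the mixed term $(t-1)(s-1)t^{-q}$. You instead interpolate linearly along $s \mapsto (c(s), \tau(s))$ and express everything in terms of the three integrals $I$, $J$, $K$: the exact identity for the left-hand side comes from the fundamental theorem of calculus applied to $\tau^2 c^{-q}$, the upper bound on $\left(\tau_2 b^{(1-q)/2} - \tau_1 a^{(1-q)/2}\right)^2$ from the same applied to $\tau c^{(1-q)/2}$ plus Jensen, and the cross term $-2(\tau_2-\tau_1)(b-a)J$ is absorbed via the Cauchy--Schwarz bound $J^2 \leq IK$ and Young's inequality with weight $q/6$. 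I checked the arithmetic: the coefficient of $(b-a)^2 I$ after absorption is $\frac{q(2q+3)}{6} > 0$, and the coefficient of $(\tau_2-\tau_1)^2$ is at least $\left(\frac{8q}{3(1-q)} + \frac{3}{q}\right)(a^{1-q}+b^{1-q}) > 0$, so the argument closes. Your method is uniform in all four parameters, needs no separate treatment of $\tau_i = 0$, and replaces the paper's casework with two applications of FTC plus Cauchy--Schwarz--Young; the cost is introducing the integral quantities, but these are elementary. Both arguments yield the same constants $\zeta_1 = \zeta/6$, $\zeta_2 = \zeta + 9/q$, and in both proofs these are chosen merely to make things close, not to be sharp. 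One small expository remark: it is worth noting explicitly in your writeup that $J \geq 0$ (which follows from $\tau(s) \geq 0$, $c(s) > 0$) so that $-2(\tau_2-\tau_1)(b-a)J \geq -2|\tau_2-\tau_1|\,|b-a|\,J$ before applying $J \leq \sqrt{IK}$; this is implicit in your argument but deserves a word.
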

\begin{proof}
Here we only prove \eqref{eq:guelle-2}; for the proof of \eqref{eq:guelle-1} we refer to \cite[pp. 5-6]{Kas09}. \eqref{eq:guelle-2} is easily checked if $\tau_2 =0$. If $\tau_1=0$ and $\tau_2>0$ the inequality reads
\[ -b^{1-q} + ab^{-q} \geq \left( \zeta_1(q)-\zeta_2(q) \right) b^{1-q} - \zeta_2(q)a^{1-q}.\]
This is true since $ \zeta_1(q)-\zeta_2(q) < -1$.

Now we consider the case $\tau_1 \tau_2>0$. We can assume $b \geq a$ due to symmetry. Setting $t=\frac{b}{a} \geq 1$, $s = \frac{\tau_2}{\tau_1} > 0$ and $\lambda = s^2 t^{-q}$, assertion \eqref{eq:guelle-2} is equivalent to
\begin{equation}\label{eq:guelle-lambda}
 \zeta_1(q) \left(\sqrt{\lambda t} - 1\right)^2 \leq (t-1)(1-\lambda) + \zeta_2(q) (s-1)^2\left(t^{1-q}+1 \right)
\end{equation}
We estimate
\begin{align*}
 \left(\sqrt{\lambda t} - 1\right)^2 &\leq 2 \left( \sqrt{\lambda t} - t^{\frac{1-q}{2}} \right)^2 + 2 \left( t^{\frac{1-q}{2}} - 1\right)^2 = 2(s-1)^2 t^{1-q} + 2 \left( t^{\frac{1-q}{2}} - 1\right)^2 \\
&\leq 2(s-1)^2 t^{1-q} + \frac{2}{\zeta(q)} (t-1)(1-t^{-q}),
\end{align*}
where we have used \autoref{lem:convex} in the last inequality noting that $\frac{4q}{(1-q)^2} \geq \frac{4q}{1-q} =  \zeta(q)$ for $q \in (0,1)$. We decompose the last factor of the above inequality as follows:
\[ 1-t^{-q} = (1-\lambda) + (\lambda - t^{-q}) = (1-\lambda) + (s-1)^2 t^{-q} + 2(s-1) t^{-q} .\]
This implies
\begin{equation}\label{eq:decomposition}
 \left(\sqrt{\lambda t} - 1\right)^2 \leq \left( 2 + \frac{2}{\zeta(q)} \right) (s-1)^2 t^{1-q} + \frac{2}{\zeta(q)} (t-1)(1-\lambda) + \frac{4}{\zeta(q)} (t-1) (s-1) t^{-q}.
\end{equation}
It remains to estimate the last term in \eqref{eq:decomposition}. To this end we consider different ranges of $t \in [1,\infty)$ and $s \in (0,\infty)$:
\begin{enumerate}[a)]
 \item $t >1$, $s \in (1,2)$ and $t-1 > \frac4q t (s-1)$: By the mean value theorem, there is $\xi \in (1,t)$ such that $t^q-1=q \xi^{q-1}(t-1)$. Then we can estimate
\[ \frac{(s+2)(s-1)}{t-1} \leq \frac{q(s+2)}{4 t} \leq \frac{q}{t} \leq \frac{q}{t^{1-q}} \leq q \xi^{q-1} = \frac{t^q-1}{t-1}.\]
Therefore
\[ (s+2)(s-1) \leq t^q-1, \quad \text{or equivalently} \quad s-1 \leq t^q-s^2=t^q(1-\lambda).\]
This implies $(t-1) (s-1) t^{-q} \leq (t-1)(1-\lambda)$. We deduce from \eqref{eq:decomposition}
\begin{subequations}
 \begin{equation} \label{eq:cases-1}
 \tfrac16 \zeta(q) \left(\sqrt{\lambda t} - 1\right)^2 \leq \left( \tfrac13 \zeta(q) + \tfrac13 \right) (s-1)^2 t^{1-q} + (t-1)(1-\lambda).
\end{equation}
\item $t >1$, $s \in (1,2)$ and $t-1 \leq \frac4q t (s-1)$: In this case $(t-1) (s-1) t^{-q} \leq \tfrac4q t^{1-q} (s-1)^2$ and -- again by \eqref{eq:decomposition} --
\begin{equation}\label{eq:cases-2}
 \tfrac12 \zeta(q) \left(\sqrt{\lambda t} - 1\right)^2 \leq \left( \zeta(q) + 1+\tfrac{8}{q}\right) (s-1)^2 t^{1-q} + (t-1)(1-\lambda).
\end{equation}
\item $t=1$ or $s \leq 1$: Then obviously $(t-1) (s-1) t^{-q} \leq 0$ and
\begin{equation}\label{eq:cases-3}
 \tfrac12 \zeta(q) \left(\sqrt{\lambda t} - 1\right)^2 \leq \left( \zeta(q) + 1 \right) (s-1)^2 t^{1-q} + (t-1)(1-\lambda).
\end{equation}
\item $t>1$, $s \geq 2$:  Using $s-1 \leq (s-1)^2$ we obtain $(t-1) (s-1) t^{-q} \leq (s-1)^2 t^{1-q}$ and
\begin{equation}\label{eq:cases-4}
 \tfrac12 \zeta(q) \left(\sqrt{\lambda t} - 1\right)^2 \leq \left( \zeta(q) + 3\right) (s-1)^2 t^{1-q} + (t-1)(1-\lambda).
\end{equation}
\end{subequations}
\end{enumerate}
Combining inequalities \eqref{eq:cases-1}-\eqref{eq:cases-4} we obtain \eqref{eq:guelle-lambda} since $3 < 1 +\tfrac8q < \tfrac9q$. This finishes the proof of \autoref{lem:guelle}.
\end{proof}

\subsection{Basic step for negative exponents}
The following proposition provides the elementary step of Moser's iteration. Its
proof imitates Moser's ideas in \cite{moser-alt} and \cite{moser-neu},
respectively.
\begin{prop}\label{prop:step-neg} Let $\frac12 \leq r <R \leq 1$ and $p>0$. Then every
nonnegative supersolution $u$ in $Q = I \times \Omega$, $Q \Supset Q_\ominus(R)$, with $u \geq \epsilon>0$ in $Q$
satisfies the following inequality
\begin{align} 
  \left( \int_{Q_\ominus(r)} \widetilde u^{-\kappa p}(t,x) \d
x \d t \right)^{1/\kappa} &\leq A  \int_{Q_\ominus(R)} \widetilde u^{-p}(t,x) \d
x \d t\ ,\label{eq:moment-neg}
\end{align}
where $\widetilde u = u + \norm{f}_{L^\infty(Q)}$, $\kappa=1+\frac{\alpha}{d}$
and $A$ can be chosen as
\begin{equation} \label{eq:const-neg}
 A = C(p+1)^2 \left( \left(R-r\right)^{-\alpha}+ (R^{\alpha}-r^\alpha)^{-1}
\right) \qquad \text{with } C=C(d,\alpha_0,\Lambda).
\end{equation}
\end{prop}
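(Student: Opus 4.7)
My plan is a three-stage Moser argument: an energy inequality, the fractional Sobolev inequality, and a parabolic interpolation from $L^\infty_t L^2_x\cap L^2_t L^{2\sigma}_x$ to $L^{2\kappa}_{t,x}$. Set $q:=p+1>1$ and recall $\widetilde u=u+\norm{f}_{L^\infty(Q)}$ satisfies $\bet{f}\leq\widetilde u$, so the inhomogeneity absorbs into integrals of $\widetilde u^{-p}$. Choose cutoffs $\psi\in C_c^\infty(B_R)$ with $\psi\equiv1$ on $B_r$, $\bet{\nabla\psi}\leq c(R-r)^{-1}$, and $\eta\in C^1$ vanishing at $t=-R^\alpha$ with $\eta\equiv1$ on $I_\ominus(r)$, $\bet{\eta'}\leq c(R^\alpha-r^\alpha)^{-1}$. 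In the pointwise-in-$t$ supersolution inequality (\ref*{eq:def-solution}') I test against the nonnegative (and, since $u\geq\epsilon$, bounded) function
\[
\phi(t,x)=\eta^2(t)\psi^{q+1}(x)\widetilde u^{-q}(t,x).
\]

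The time derivative transforms via $\partial_t u\cdot\widetilde u^{-q}=-p^{-1}\partial_t\widetilde u^{-p}$; integrating on $(-R^\alpha,s]$ for $s\in I_\ominus(r)$ and integrating by parts in $t$ produces the ``sup'' term $p^{-1}\eta^2(s)\int\psi^{q+1}\widetilde u^{-p}(s,\cdot)\d x$ plus a remainder $\lesssim(R^\alpha-r^\alpha)^{-1}\int_{Q_\ominus(R)}\widetilde u^{-p}$ from $\eta\eta'$. For the bilinear form I apply \autoref{lem:guelle}(i) pointwise with $\tau_1=\psi(x)$, $\tau_2=\psi(y)$, $a=\widetilde u(t,x)$, $b=\widetilde u(t,y)$: this bounds the integrand of $\cE_t(u,\phi)/\eta^2$ above by
\[
-p^{-1}\psi(x)\psi(y)\bigl[(\widetilde u/\psi)^{-p/2}(y)-(\widetilde u/\psi)^{-p/2}(x)\bigr]^2+\vartheta(q)(\psi(x)-\psi(y))^2\bigl[(\widetilde u/\psi)^{-p}(x)+(\widetilde u/\psi)^{-p}(y)\bigr].
\]
Combined with the supersolution inequality the first piece generates the energy on the LHS of the target estimate; via $\psi(x)\psi(y)\geq(\psi(x)\wedge\psi(y))^2$ and hypothesis \eqref{eq:K-2} on $B_R$ this energy dominates $\akon\iint_{B_R\times B_R}\bet{w(x)-w(y)}^2\bet{x-y}^{-d-\alpha}\d x\d y$ for $w:=\psi\widetilde u^{-p/2}$ (up to cross-terms absorbable into the error). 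The error itself is controlled via $\bet{\psi(x)-\psi(y)}\leq c\min(1,\bet{x-y}/(R-r))$ and a $y$-split at scale $R-r$ using \eqref{eq:K-1}: $\sup_x\int(\psi(x)-\psi(y))^2 k_0(x,y)\d y\leq c\Lambda(R-r)^{-\alpha}$. Multiplying through by $p$ clears the $1/p$ in front of the good term and collapses the error coefficient to $p\vartheta(p+1)\lesssim(p+1)^2$; taking the supremum over $s$ gives
\[
\sup_{s\in I_\ominus(r)}\int_{B_r}\widetilde u^{-p}(s,\cdot)\d x+\akon\int_{I_\ominus(r)}\iint_{B_R\times B_R}\frac{\bet{w(x)-w(y)}^2}{\bet{x-y}^{d+\alpha}}\d x\d y\d t\leq A\int_{Q_\ominus(R)}\widetilde u^{-p},
\]
with $A$ of the form \eqref{eq:const-neg}.

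Finally, \eqref{eq:sobolev} applied to $w(t,\cdot)$ and integrated in $t$ yields $\int_{I_\ominus(r)}\bigl(\int_{B_R}w^{2\sigma}\d x\bigr)^{1/\sigma}\d t\leq cA\int_{Q_\ominus(R)}\widetilde u^{-p}$ for $\sigma=d/(d-\alpha)$; the Hölder interpolation $\int w^{2\kappa}\leq(\int w^2)^{\alpha/d}(\int w^{2\sigma})^{(d-\alpha)/d}$, with $\kappa=1+\alpha/d$, combined with the $L^\infty_t L^2_x$-bound on $w$ from the energy inequality produces $\int_{Q_\ominus(r)}\widetilde u^{-\kappa p}\d x\d t\leq cA^\kappa\bigl(\int_{Q_\ominus(R)}\widetilde u^{-p}\bigr)^\kappa$, which is \eqref{eq:moment-neg} after taking $\kappa$-th roots. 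I expect the main technical obstacle to be the clean extraction of the factor $\akon$ in the energy inequality: condition \eqref{eq:K-2} provides precisely this factor, and it matches the $\akon$ already present in \eqref{eq:sobolev}, so the final constant $A$ stays uniform as $\alpha\nearrow2$ -- the central robustness feature of the paper. A secondary subtlety is the justification of the pointwise-in-$t$ form (\ref*{eq:def-solution}') via Steklov averages, as discussed in \autoref{sec:A-1}.
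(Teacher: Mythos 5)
Your proposal follows the paper's strategy in its main lines: same test function $\psi^{q+1}\widetilde u^{-q}$ with $q=p+1$, same algebraic Lemma \ref{lem:guelle}(i), the same treatment of $f$ via $\bet{f}\leq\widetilde u$, time integration against a cutoff $\chi^2$ (your $\eta^2$), and the Hölder--Sobolev interpolation with $\theta=d/(d-\alpha)$, $\theta'=d/\alpha$ and $r\geq\tfrac12$ controlling the residual term. Where your argument departs is the passage from the ``good term'' of Lemma \ref{lem:guelle}(i) to a Gagliardo seminorm, and there a genuine gap opens.

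Lemma \ref{lem:guelle}(i) with $\tau_1=\psi(x)$, $\tau_2=\psi(y)$ produces the good term
\[
\frac1{q-1}\,\psi(x)\psi(y)\Bigl[\bigl(\tfrac{\widetilde u}{\psi}\bigr)^{\!-p/2}(y)-\bigl(\tfrac{\widetilde u}{\psi}\bigr)^{\!-p/2}(x)\Bigr]^2,
\]
i.e.\ the quantity inside the square carries $\psi^{p/2}$, not $\psi^1$. You claim this, together with $\psi(x)\psi(y)\geq(\psi(x)\wedge\psi(y))^2$ and \eqref{eq:K-2} on $B_R$, dominates the Gagliardo energy of $w=\psi\,\widetilde u^{-p/2}$ on $B_R\times B_R$ ``up to cross-terms absorbable into the error''. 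That is not established, and it is not an easy manipulation: the exponent mismatch means $w=\psi^{1-p/2}\cdot\bigl(\tfrac{\widetilde u}{\psi}\bigr)^{-p/2}$, so the cross-terms involve differences $\psi^{1-p/2}(x)-\psi^{1-p/2}(y)$ weighted by $\widetilde u^{-p}$, and the external $\psi(x)\psi(y)$ does not line up with the $\psi^{2-p}(x)$ you would need; nothing in \eqref{eq:K-1}--\eqref{eq:K-2} makes these terms absorbable for free. Relatedly, your energy inequality states a sup-term $\sup_t\int_{B_r}\widetilde u^{-p}$, which equals $\sup_t\int_{B_r}w^2$, but the Sobolev residual $R^{-\alpha}\int_{B_R}w^2$ and the interpolation then need that quantity on $B_R$; the time-derivative term only gives $\sup_t\int\psi^{q+1}\widetilde u^{-p}$, and $\psi^{q+1}\leq\psi^2$ for $q>1$, so this also does not directly control $\int_{B_R}w^2$.

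The paper avoids both issues by restricting the (pointwise nonnegative) good term to $B_r\times B_r$, where $\psi\equiv1$, so it becomes exactly $\iint_{B_r\times B_r}[v(x)-v(y)]^2k_t$ with $v=\widetilde u^{-p/2}$ — see \eqref{eq:3-4} — and then applies \eqref{eq:K-2} and the Sobolev inequality on $B_r$ rather than $B_R$; the residual $r^{-\alpha}\int_{B_r}v^2$ is bounded using $r\geq\tfrac12$. With $\psi^{q+1}\widetilde u^{-q}$ as test function this is really the only clean route: keeping the cutoff inside the Gagliardo energy is what you want when testing with $\psi^2\widetilde u^{-q}$ and invoking Lemma \ref{lem:guelle}(ii) (as the paper does for Proposition \ref{prop:step-pos}), but that lemma is restricted to $q\in(0,1)$, whereas here $q=p+1>1$. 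So either fill in the cross-term estimate you elided — which is nontrivial and not needed — or, simpler, set $w=\widetilde u^{-p/2}$ and carry the energy estimate on $B_r$ as in the paper; the rest of your argument then goes through unchanged.
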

\begin{bem}
Note that
\[ \frac{1}{\left(R-r\right)^{\alpha}}+ \frac{1}{(R^{\alpha}-r^\alpha)} \leq
 \begin{cases}
     \frac{2}{\left(R-r\right)^{\alpha}}\quad &\text{ for } \alpha \in [1,2),\\
    \frac{2}{(R^{\alpha}-r^\alpha)}&\text{ for } \alpha \in (0,1].
   \end{cases}
\]
\end{bem}
\begin{proof}
Let $u$ be a supersolution in $Q$ with $u \geq \epsilon >0$ in $Q$. We set $\widetilde u =
u+\norm{f}_{L^\infty(Q)}$. If $f=0$ a.e. in $Q$ we set $\widetilde u = u+
\delta$ and pass to the limit $\delta \searrow 0$ in the end.  For $q >1$ define
\[ v(t,x) = \widetilde u^{\frac{1-q}{2}}(t,x), \qquad \phi(t,x) = \widetilde
u^{-q}(t,x) \psi^{q+1}(x),\]
where $\psi\colon \R^d \to [0,1]$ is defined by $\psi(x)= \left(
\frac{R-\bet{x}}{R-r} \wedge 1 \right) \vee 0$. Obviously, $\psi^{q+1} \in
H_0^{\alpha/2}(B_R)$.
Hence, from \eqref{eq:steklov-solution} we obtain\enlargethispage{2em}
\allowdisplaybreaks
 \begin{multline} \int_{B_R} -\psi^{q+1}(x) \widetilde u^{-q}(t,x) \pt
\widetilde u(t,x) \d x+\\ + \iint\limits_{\R^d\, \R^d} \left[ \widetilde
u(t,x)-\widetilde u(t,y) \right] \left[ \psi^{q+1}(y) \widetilde u^{-q}(t,y) -
\psi^{q+1}(x)\widetilde u^{-q}(t,x)\right] k_t(x,y) \d x \d y\\ \leq \int_{B_R} -
\psi^{q+1}(x) \widetilde u^{-q}(t,x) f(t,x) \d x.
 \nonumber
\end{multline}
Applying \autoref{lem:guelle}(i) (remember the definition of
$\vartheta(q)$ therein) and rewriting \\ $\pt{v^2} =
(1-q)  \widetilde u^{-q} \pt{\widetilde u}$ yields
\begin{multline}
\tfrac1{q-1} \int_{B_R} \psi^{q+1} \pt{(v^2)} \d x+ \tfrac{1}{q-1}\iint\limits_{\R^d\, \R^d}  \psi(x)\psi(y) \left[ \left( \tfrac{\widetilde u(t,x)}{\psi(x)}
\right)^{\frac{1-q}{2}} - \left( \tfrac{\widetilde u(t,y)}{\psi(y)}
\right)^{\frac{1-q}{2}} \right]^2 k_t(x,y) \d x \d y \\
\leq \vartheta(q) \iint\limits_{\R^d\, \R^d} \left[ \psi(x)-\psi(y) \right]^2
\left[ \left(\frac{\widetilde u(t,x)}{\psi(x)} \right)^{1-q} +
\left(\frac{\widetilde u(t,y)}{\psi(y)} \right)^{1-q} \right] k_t(x,y) \d x \d
y\\ + \int_{B_R}  \psi^{q+1}(x) \bet{\widetilde u^{-q}(t,x)} \bet{f(t,x)} \d x\, .
\label{eq:int}
\end{multline}
The definition of $\psi$ results in the two estimates
\begin{align}
\begin{split} \label{eq:3-4}
\iint\limits_{\R^d\, \R^d} \left[\psi(x)\psi(y)\right] \left[ \left(
\frac{\widetilde u(t,x)}{\psi(x)} \right)^{\frac{1-q}{2}} - \left(
\frac{\widetilde u(t,y)}{\psi(y)} \right)^{\frac{1-q}{2}} \right]^2 k_t(x,y) \d
x \d y \\
\geq \iint\limits_{B_{r}\, B_r} \left[ v(t,x) - v(t,y) \right]^2 k_t(x,y) \d x \d
y,
\end{split}
\intertext{and}
\label{eq:3-5}
\iint\limits_{\R^d\, \R^d} &\left[ \psi(x)-\psi(y) \right]^2 \left[
\left(\frac{\widetilde u(t,x)}{\psi(x)} \right)^{1-q} + \left(\frac{\widetilde
u(t,y)}{\psi(y)} \right)^{1-q} \right] k_t(x,y) \d x \d y \nonumber \\
&\leq 2 \iint\limits_{B_{R}\, B_R} \left[ \psi(x)-\psi(y) \right]^2 \widetilde
u^{1-q}(t,x)\, k_t(x,y) \d x \d y  \nonumber \\
&\qquad \qquad + 4 \iint\limits_{B_{R}\, B_R^c} \left[ \psi(x)-\psi(y)\right]^2 \widetilde u^{1-q}(t,x) k_t(x,y) \d y \d x\nonumber \\
& \leq c_1(d,\Lambda) (R-r)^{-\alpha} \int_{B_R} v^2(x) \d x,
\end{align}
where we have used \eqref{eq:K-1} and $\sup_{x,y\in \R^d} \frac{\bet{\psi(x)-\psi(y)}^2}{\bet{x-y}^2} \leq
\frac{1}{(R-r)^2}$. Combining \eqref{eq:3-4}, \eqref{eq:3-5} and the fact that
$\norm{f/\widetilde u}_{L^{\infty}(Q)} \leq 1$ we obtain from \eqref{eq:int}
\begin{multline} \label{eq:raum-absch}
\int_{B_R} \psi^{q+1}(x) \pt{(v^2)}(t,x) \d x+   \iint\limits_{B_{r}\, B_r} \left[ v(t,x) - v(t,y) \right]^2 k_t(x,y) \d x \d y \\
\leq \ (q-1) \left(1+ \vartheta(q) c_1 (R-r)^{-\alpha} \right) \int_{B_R} v^2(x)
\d x.
\end{multline}
Now define a piecewise differentiable function $\chi \colon \R \to [0,1]$ by
$\chi(t)= \left( \frac{t+R^\alpha}{R^\alpha-r^\alpha} \wedge 1 \right) \vee 0$.
Multiplying \eqref{eq:raum-absch} by $\chi^2$ we get\enlargethispage{1em}
\begin{multline}
\pt \int_{B_R} \psi^{q+1}(x) \left[\chi(t) v(t,x)\right]^2 \d x+
  \chi^2(t) \iint\limits_{B_{r}\, B_r}  \left[ v(t,x) - v(t,y)
\right]^2 k_t(x,y) \d x \d y \\
\leq c_2 (q-1) \vartheta(q) (R-r)^{-\alpha} \chi^2(t) 
\int_{B_R} v^2(t,x) \d x + 2 \chi(t) \bet{\chi'(t)} \int_{B_R} v^2(t,x)
\end{multline}
and integrating this inequality from $-R^{\alpha}$ to some $t \in I_{\ominus}(r)$
yields
\begin{multline*}
\int_{B_R} \psi^{q+1}(x) (\chi(t) v(t,x))^2 \d x +  
\int_{-R^\alpha}^t \chi^2(s) \iint\limits_{B_{r}\, B_r} \left[ v(s,x) - v(s,y)
\right]^2 k_s(x,y) \d x \d y \d s \\
\leq c_2 (q-1) \vartheta(q) (R-r)^{-\alpha} 
\int_{-R^\alpha}^t \chi^2(s)  \int_{B_R} v^2(s,x) \d x \d s\ +\\
+ \int_{-R^\alpha}^t 2 \chi(s) \bet{\chi'(s)} \int_{B_R} v^2(s,x) \d x \d s,
\end{multline*}
which implies, noting that $\bet{\chi'} \leq \frac{1}{R^\alpha-r^\alpha}$,
\begin{multline} \label{eq:cacciopoli}
\sup_{t \in I_{\ominus}(r)} \int_{B_{r}} v^2(t,x) \d x +  
\int_{Q_{\ominus}(r)} \int_{B_{r}} \left[ v(s,x) - v(s,y) \right]^2 k_s(x,y) \d
x \d y \d s \\
\leq c_3 (q-1) \vartheta(q) \left( (R-r)^{-\alpha} + (R^{\alpha}-r^\alpha)^{-1} \right) \int_{Q_{\ominus}(R)} v^2(s,x) \d
x \d s.
\end{multline}

In order to estimate the second term on the left-hand side from below we apply Hölder's inequality with exponents
$\theta=\frac{d}{d-\alpha}$, $\theta'=\frac{d}{\alpha}$ to the integrand
$v^{2\kappa}$ and then we make use of Sobolev's inequality \eqref{eq:sobolev}:
\begin{align*}
\int_{Q_\ominus(r)} &v^{2\kappa}(t,x) \d x \d t = \int_{Q_\ominus(r)} v^{2}(t,x)
v^{2\alpha/d}(t,x) \d x \d t \\
 &\leq \int_{I_\ominus(r)} \left( \int_{B_{r}} v^{2\theta}(t,x) \d
x\right)^{1/\theta} 
\left(  \int_{B_{r}} v^2(t,x) \d x \right)^{1/\theta'} \d t \nonumber \\
&\leq S\ \sup_{t \in I_\ominus(r)} \left( \int_{B_{r}} v^2(t,x) \d x
\right)^{1/\theta'} \times \\ &\qquad \times \left[\akon
\int_{Q_{_\ominus}(r)} \int_{B_{r}}
\frac{\bet{v(s,x)-v(s,y)}^2}{\bet{x-y}^{d+\alpha}} \d x \d y \d s + r^{-\alpha}
\int_{Q_\ominus(r)} v^2(s,x) \d x \d s \right], 
\end{align*}
where $S=S(d,\alpha_0)$.
Using \eqref{eq:cacciopoli} twice, $r \geq \frac12$ and \eqref{eq:K-2} yields
\begin{multline*}
\int_{Q_\ominus(r)} v^{2\kappa}(t,x) \d x \d t \leq c_4(d,\Lambda,\alpha_0) \left[ (q-1) \vartheta(q) \left( (R-r)^{-\alpha} + (R^{\alpha}-r^\alpha)^{-1} \right) \right]^{1/\theta'}
\times \\ 
\times \left[ (q-1) \vartheta(q) \left( (R-r)^{-\alpha} + (R^{\alpha}-r^\alpha)^{-1} \right) +1 \right]
\left[\int_{Q_{\ominus}(R)} v^2(s,x) \d x \d s \right]^{1+1/\theta'}.
\end{multline*}
Finally, we can estimate the coefficient by
\begin{align*}
c_4(d,\Lambda,\alpha_0) &\left[ (q-1) \vartheta(q) \left( (R-r)^{-\alpha} + (R^{\alpha}-r^\alpha)^{-1} \right) \right]^{\frac{1}{\theta'}}\\
&\qquad \qquad + \left[ (q-1) \vartheta(q) \left( (R-r)^{-\alpha} + (R^{\alpha}-r^\alpha)^{-1} \right)
\right]^{\kappa} \\
&\leq c_5(d,\Lambda,\alpha_0) \left[ q \vartheta(q) \left( (R-r)^{-\alpha} + (R^{\alpha}-r^\alpha)^{-1} \right)
\right]^\kappa \\
&\leq c_6(d,\Lambda,\alpha_0) q^{2\kappa} \left[ (R-r)^{-\alpha} + (R^\alpha-r^\alpha)^{-1}
\right]^\kappa\ ,
\end{align*}
which finishes the proof of \eqref{eq:moment-neg} by taking $p=q-1$ and
resubstituting $v=u^{\frac{1-q}{2}}$.
\end{proof}


\subsection{An estimate for \texorpdfstring{$\inf u$}{inf u}}
We apply the fact that the $p$-th moments of $u$ converge to the infimum for $p
\to -\infty$ and establish a local estimate on the infimum of $u$.

\begin{sa}[Moser iteration I] \label{thm:inf-est} Let $\frac12\leq r<R\leq
1$ and $0<p\leq1$. Then there is a constant $C=C(d, \alpha_0, \Lambda)>0$
such that for every nonnegative supersolution $u$ in $Q = I \times \Omega$, $Q \Supset Q_\ominus(R)$, with $u \geq
\epsilon >0$ in $Q$ the following estimate holds:
\begin{align}
\sup_{Q_{\ominus}(r)} \widetilde u^{-1} &\leq
\left(\frac{C}{G_1(r,R)}\right)^{1/p} \left( \int_{Q_{\ominus}(R)} \widetilde
u^{-p}(t,x) \d x \d t \right)^{1/p}, \label{eq:inf-est}
\end{align}
where $\widetilde u = u + \norm{f}_{L^\infty(Q)}$ and 
$G_1(r,R)= \begin{cases}
(R-r)^{d+\alpha}\quad &\text{if } \alpha \geq 1,\\
(R^\alpha-r^\alpha)^{(d+\alpha)/\alpha} &\text{if } \alpha <1.
\end{cases}$
\end{sa}
Note that on can estimate $G_1(r,R)$ independently of $\alpha$ as follows:
\[G_1(r,R) \geq (R-r)^{d+2} \wedge \left(\alpha_0
(R-r)\right)^{(d+2)/\alpha_0}.\]
The proof of \autoref{thm:inf-est} uses the method established in \cite[§
4]{moser-neu}.
\begin{proof} To shorten notation we define
\begin{equation}
 \cM_\ominus(u^{-1};r,p) = \cM_\ominus(r,p) = \left( \int_{Q_\ominus(r)}
\bet{u(t,x)}^{-p} \d x \d t \right)^{1/p}.
\end{equation}
Then the moment inequality \eqref{eq:moment-neg} reads
\[ \cM_\ominus(r, \kappa p) \leq A^{1/p}\ \
\cM_\ominus(R,p) \qquad \text{ with } A \text{ as in \eqref{eq:const-neg}}.\]
The strategy is to iterate \autoref{prop:step-neg} using a decreasing
sequence $r_0=R>r_1>r_2>\ldots>r$ of radii and the sequence of negative
exponents $p_m=-p \kappa^m$, $m \in \N_0$. 

Applying \eqref{eq:moment-neg} repeatedly we obtain the inequality
\begin{align*}
\cM^p_\ominus(r,p_{m+1}) \leq \cM^p_\ominus(r_{m+1},p_{m+1}) \leq
A_m^{1/\kappa^{m}} \cM^p_\ominus(r_m, p_m) \leq \cM^p_\ominus(r_0,p)
\prod_{j=0}^m A_j^{1/\kappa^{j}} .
\end{align*}
with $A_j = C(p_j+1)^2 \left( \left(r_j-r_{j+1}\right)^{-\alpha}+
(r_j^{\alpha}-r_{j+1}^\alpha)^{-1} \right)$.
Taking the limit $m \to \infty$, we find
\[ \sup_{Q_\ominus(r)} \widetilde u^{-p} = \lim_{m\to\infty}
\cM^p_\ominus(r,p_m)  
\leq \cM^p_\ominus(R,p) \prod_{j=0}^\infty A_j^{1/\kappa^{j}}  .\]
Consequently it now suffices to prove that
\begin{align} \label{eq:product}
 \prod\limits_{j=0}^\infty A_j^{1/\kappa^{j}} \leq
\frac{C}{G_1(r,R)}
\end{align}
for some suitable constant $C>0$. To establish \eqref{eq:product} we will choose
two different sequences $(r_m)$ in the cases $\alpha \in [1,2)$ and $\alpha\in
(\alpha_0,1)$:
\begin{itemize}
 \item $\alpha \in [1,2):$ Set $r_m = r+ \frac{(R-r)}{2^m}$. We can estimate
 \[ A_j \leq  c_1 (p\kappa^j + 1)^2 \left(r_j-r_{j+1}\right)^{-\alpha} \leq c_1 (2
\kappa^j)^2 \left(\frac{2^{j+1}}{R-r}\right)^\alpha \leq
\frac{c_2^j}{(R-r)^\alpha}. \]
Now the convergence of the infinite product in \eqref{eq:product} follows
immediately since\\ $\sum\limits_{j=0}^\infty \frac{j}{\kappa^{j}}\leq
\sum\limits_{j=0}^\infty \frac{j}{(1+\alpha_0/d)^{j}} \leq  c_3(\alpha_0,d)
<\infty$ and
\[ \prod\limits_{j=0}^\infty \left[ (R-r)^{-\alpha}\right]^{1/\kappa^{j}} =
(R-r)^{-\alpha \sum\limits_{j=0}^\infty \kappa^{-j}}  =(R-r)^{-\alpha
\frac{\kappa}{\kappa-1}}= \frac{1}{(R-r)^{d+\alpha}}\ .\]
This proves \eqref{eq:inf-est} in the case $\alpha \geq 1$.
\item $\alpha \in (\alpha_0,1]:$ Set $r_m = \left( r^{\alpha} +
\frac{(R^\alpha-r^\alpha)}{2^m} \right)^{1/\alpha}$. Then we obtain in a very similar way
as above
\[ \displaystyle A_j \leq \frac{c_4^j}{(R^\alpha-r^\alpha)} \quad \text{and}
\quad \prod\limits_{j=0}^\infty \left[
(R^\alpha-r^\alpha)^{-1}\right]^{1/\kappa^{j}} =
\frac{1}{(R^\alpha-r^\alpha)^{(d+\alpha)/\alpha}}.\]
This proves \eqref{eq:inf-est} in the case $\alpha <1.$
\end{itemize}
The lower bound on $G_1$ follows from the elementary inequalities
\begin{align*}
 (R-r)^{d+\alpha} &\geq (R-r)^{d+2},\\
 (R^\alpha - r^\alpha) & \geq \alpha\, 1^{\alpha-1} (R-r) \geq  \alpha_0 (R-r) \quad \text{if } \alpha_0 < \alpha < 1.
\end{align*}
The proof of \autoref{thm:inf-est} is complete.
\end{proof}


\subsection{Basic step for positive exponents}
The technique for the proof of the basic step for small positive exponents is very similar to the one used in the case of negative exponents. We state it separately and indicate the modifications
which are necessary to obtain the corresponding moment inequality.
\begin{prop}\label{prop:step-pos} Let $\frac12 \leq r <R \leq 1$ and $p \in (0,\kappa^{-1}]$ with $\kappa=1+ \frac{\alpha}{d}$. Then every nonnegative
supersolution $u$ in $Q= I\times \Omega$, $Q \Supset Q_\oplus(R)$, satisfies
the following inequality
\begin{align} 
  \left( \int_{Q_\oplus(r)} \widetilde
u^{\kappa p}(t,x) \d x \d t \right)^{1/\kappa} &\leq A'  \int_{Q_\oplus(R)}
\widetilde u^{p}(t,x) \d x \d t\ ,\label{eq:moment-pos}
\end{align}
where $\widetilde u = u + \norm{f}_{L^\infty(Q)}$ and $A'$ can be chosen as
\begin{equation} \label{eq:const-pos}
 A' = C' \left( \left(R-r\right)^{-\alpha}+ (R^{\alpha}-r^\alpha)^{-1} \right)
\qquad \text{with } C'=C'( d,\alpha_0,\Lambda).
\end{equation}
\end{prop}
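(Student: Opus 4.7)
The proof will follow the same architecture as that of \autoref{prop:step-neg}, but with \autoref{lem:guelle}(ii) replacing (i), and with the time variable integrated in the opposite direction.

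Set $q = 1 - p$, so that $q \in [1 - \kappa^{-1}, 1)$, and let $v = \widetilde u^{(1-q)/2} = \widetilde u^{p/2}$, so $v^2 = \widetilde u^p$ and $v^{2\kappa} = \widetilde u^{\kappa p}$. Choose the same spatial cutoff $\psi(x) = \big((R-|x|)/(R-r) \wedge 1\big) \vee 0$ as before, but now take the test function $\phi(t,x) = \psi^2(x)\,\widetilde u^{-q}(t,x)$ (so the exponent of $\psi$ matches \eqref{eq:guelle-2}). Inserted into \eqref{eq:steklov-solution} and using $\widetilde u^{-q}\partial_t \widetilde u = (1-q)^{-1} \partial_t v^2$ together with $p > 0$, we obtain
\[
 \tfrac{1}{1-q}\int_{B_R}\psi^2(x)\,\partial_t v^2(t,x)\d x + \cE_t(\widetilde u, \phi) \geq \int_{B_R}\psi^2(x)\widetilde u^{-q}(t,x) f(t,x)\d x.
\]

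Applying \autoref{lem:guelle}(ii) pointwise with $a = \widetilde u(t,x)$, $b = \widetilde u(t,y)$, $\tau_1 = \psi(x)$, $\tau_2 = \psi(y)$ bounds $\cE_t(\widetilde u, \phi)$ from below by a good term $\zeta_1(q)\iint (\psi(y)v(t,y) - \psi(x)v(t,x))^2 k_t \d x \d y$ minus an error $\zeta_2(q)\iint(\psi(x) - \psi(y))^2(v^2(t,x) + v^2(t,y))k_t \d x \d y$. Exactly as in \eqref{eq:3-4} the good term dominates $\zeta_1(q)\iint_{B_r\,B_r}(v(x)-v(y))^2 k_t \d x \d y$ since $\psi \equiv 1$ on $B_r$, while the error term is controlled by $c(R-r)^{-\alpha}\int_{B_R} v^2 \d x$ using \eqref{eq:K-1} and $|\nabla \psi| \leq (R-r)^{-1}$, as in \eqref{eq:3-5}. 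Dividing through by $\zeta_1(q)$ yields the positive-power analogue of \eqref{eq:raum-absch}; the key observation for the $p$-independence of $A'$ is that although $\zeta_1(q)$ and $\zeta_2(q)$ both blow up as $p\to 0$, their ratio $\zeta_2(q)/\zeta_1(q)$ stays bounded uniformly on the compact range $q \in [1-\kappa^{-1},1)$, with a bound depending only on $d, \alpha_0, \Lambda$.

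Next, introduce a time cutoff $\chi \in C^1(\R)$ with $\chi \equiv 1$ on $(-\infty, r^\alpha]$, $\chi \equiv 0$ on $[R^\alpha,\infty)$, and $|\chi'| \leq (R^\alpha - r^\alpha)^{-1}$. Multiplying the spatial estimate by $\chi^2(t)$ and rewriting $\chi^2 \partial_t v^2 = \partial_t(\chi^2 v^2) - 2\chi\chi' v^2$, we integrate in time from an arbitrary $t \in I_\oplus(r)$ up to $R^\alpha$. The boundary contribution at $R^\alpha$ vanishes because $\chi(R^\alpha) = 0$, so the integration produces $\int_{B_r}v^2(t,x)\d x$ together with the nonlocal energy on $Q_\oplus(r)$ on the left-hand side. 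Taking the supremum over $t\in I_\oplus(r)$ yields the positive-exponent analogue of \eqref{eq:cacciopoli}.

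Finally, \eqref{eq:moment-pos} follows from Hölder's inequality with exponents $d/(d-\alpha)$ and $d/\alpha$ combined with the Sobolev inequality \eqref{eq:sobolev}, precisely as in the last step of the $\ominus$-case. The only real obstacle is keeping the constant $A'$ free of $p$-dependence, which is handled by the uniform bound on $\zeta_2(q)/\zeta_1(q)$ mentioned above; the restriction $p \leq \kappa^{-1}$ (hence $q$ bounded away from $1$) is exactly what is needed to make this ratio controllable.
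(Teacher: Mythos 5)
Your overall scheme matches the paper, but you have elided the single new step that makes the positive-exponent case harder, and as written your argument has a real gap there.

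You apply \autoref{lem:guelle}(ii) pointwise on all of $\R^d\times\R^d$ and then claim the resulting error term
\[
\zeta_2(q)\iint_{\R^d\,\R^d}\bigl(\psi(x)-\psi(y)\bigr)^2\bigl(v^2(t,x)+v^2(t,y)\bigr)k_t(x,y)\,\d x\,\d y
\]
is controlled ``as in \eqref{eq:3-5}''. That estimate does not carry over. In \eqref{eq:3-5} the error from \autoref{lem:guelle}(i) involves $\bigl(\widetilde u/\psi\bigr)^{1-q}$ with $1-q<0$, so the integrand vanishes identically wherever $\psi=0$; the whole error is thus automatically supported on points with at least one argument in $B_R$, and \eqref{eq:K-1} closes the estimate. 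In \autoref{lem:guelle}(ii) the exponent is $1-q=p>0$ and the error term carries no factor of $\tau$: for $x\notin B_R$, $y\in B_R$ the integrand equals $\psi^2(y)\,\widetilde u^p(t,x)\,k_t(x,y)$ with $\widetilde u^p(t,x)\neq 0$, so the contribution $\int_{B_R^c}v^2(t,x)\int_{B_R}\psi^2(y)k_t(x,y)\,\d y\,\d x$ is not bounded by $\int_{B_R}v^2(t,\cdot)$. The paper avoids this by first splitting the bilinear form as in \eqref{eq:lokalisierung} into a $B_{R+h}\times B_{R+h}$ part and a cross term $\int_{B_R}\int_{B_{R+h}^c}$, bounding the cross term from below by $-\Lambda(R-r)^{-\alpha}\int_{B_R}v^2$ using the nonnegativity of $\widetilde u$ on all of $\R^d$, the identity $\psi(x)=\psi(x)-\psi(y)$ for $y\notin B_R$, and \eqref{eq:K-1}, and only then applying \autoref{lem:guelle}(ii) on the localized domain $B_R\times B_R$, where the error term genuinely reduces to $c\,(R-r)^{-\alpha}\int_{B_R}v^2$. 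This preliminary localization, absent from your proposal, is the crux of the positive-power step; it is also why the supersolution must be assumed nonnegative globally, not merely on $Q$.

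Two minor points. The direction of the estimate is reversed: \autoref{lem:guelle}(ii) bounds $\cE_t(\widetilde u,\phi)$ from \emph{above} (equivalently, it bounds the nonlocal term as written in the paper's \eqref{eq:3-21}, which is $-\cE_t(\widetilde u,\phi)$, from below); this is cosmetic but worth fixing. And the uniform-in-$p$ control of the constants is handled in the paper by bounding $(1-q)\zeta_1(q)=\tfrac{2q}{3}$ from below and $(1-q)(1+\zeta_2(q))$ from above on $q\in[1-\kappa^{-1},1)$, which is closer to what one actually divides by than the ratio $\zeta_2/\zeta_1$ you invoke, though both observations lead to the same conclusion. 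Your description of the time integration (cutoff vanishing at $R^\alpha$, integration forward from $t\in I_\oplus(r)$) and of the final Hölder--Sobolev step is correct.
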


\begin{proof} Let $u$ be a supersolution in $Q$ with $u \geq 0$ on $I \times
\R^d$. We set $\widetilde u =
u+\norm{f}_{L^\infty(Q)}$. If $f=0$ a.e. in $Q$ we set $\widetilde u = u+
\epsilon$ and pass to the limit $\epsilon \searrow 0$ in the end.

Set $q=1-p \in [1-\kappa^{-1},1)$ and define
\[ v(t,x) = \widetilde u^{\frac{1-q}{2}}(t,x), \qquad \phi(t,x) = \widetilde
u^{-q}(t,x) \psi^{2}(x)\]
with $\psi$ as in the proof of \autoref{prop:step-neg}. From \eqref{eq:steklov-solution} we obtain
\begin{multline}\label{eq:3-21} \int_{B_R} -\psi^{2}(x) \widetilde u^{-q}(t,x) \pt
\widetilde u(t,x) \d x\\ +\iint\limits_{\R^d\, \R^d} \left[ \widetilde
u(t,x)-\widetilde u(t,y) \right] \left[ \psi^{2}(y) \widetilde u^{-q}(t,y) -
\psi^{2}(x)\widetilde u^{-q}(t,x)\right] k_t(x,y) \d x \d y\\ \leq \int_{B_R} -
\psi^{2}(x) \widetilde u^{-q}(t,x) f(t,x) \d x.
\end{multline}
First we observe that for every small $h >0$
\begin{equation}\label{eq:lokalisierung}
\begin{split}
\iint\limits_{\R^d\, \R^d}&  \left[ \widetilde
u(t,x)-\widetilde u(t,y) \right] \left[ \psi^{2}(y) \widetilde u^{-q}(t,y) -
\psi^{2}(x)\widetilde u^{-q}(t,x)\right] k_t(x,y) \d x \d y \\
&= \iint\limits_{B_{R+h}\, B_{R+h}} \left[ \widetilde
u(t,x)-\widetilde u(t,y) \right] \left[ \psi^{2}(y) \widetilde u^{-q}(t,y) -
\psi^{2}(x)\widetilde u^{-q}(t,x)\right] k_t(x,y) \d x \d y \\
&\quad + 2 \int_{B_R} \int_{B_{R+h}^c} \left[ \widetilde
u(t,x)-\widetilde u(t,y) \right] \left[ -\psi^{2}(x)\widetilde u^{-q}(t,x)\right] k_t(x,y) \d y \d x\ .
\end{split}
\end{equation}
Using \eqref{eq:K-1}, the positivity of $\widetilde u$ and the fact that $\frac{(\psi(x)-\psi(y))^2}{\bet{x-y}^2} \leq (R-r)^{-2}$ we can estimate as follows:
\begin{align*}
 \int_{B_R} & \int_{B_{R+h}^c} \left[ \widetilde
u(t,x)-\widetilde u(t,y) \right] \left[ -\psi^{2}(x)\widetilde u^{-q}(t,x)\right] k_t(x,y) \d y \d x \\
&\geq -\int_{B_R} \widetilde u^{1-q}(t,x)  \left[ (R-r)^{-2}\int_{\bet{y-x} \leq R-r} \bet{x-y}^2 k_t(x,y) \d y + \int_{\bet{y-x} > R-r} k_t(x,y) \d y \right] \d x \\
&\geq -\Lambda (R-r)^{-\alpha} \int_{B_R} v^2(t,x) \d x.
\end{align*}
If $h \to 0$, this shows also that the decomposition in \eqref{eq:lokalisierung} is valid with $h=0$. Rewriting $\pt{v^2} =
(1-q)  \widetilde u^{-q} \pt{\widetilde u}$ and using that $\norm{f/\widetilde u}_{L^{\infty}(Q)} \leq 1$ we deduce from \eqref{eq:3-21}
\begin{multline*} \frac{-1}{1-q} \int_{B_R} \psi^{2}(x) \pt
v^2(t,x) \d x+\\ + \iint\limits_{B_R\, B_R} \left[ \widetilde
u(t,x)-\widetilde u(t,y) \right] \left[ \psi^{2}(y) \widetilde u^{-q}(t,y) -
\psi^{2}(x)\widetilde u^{-q}(t,x)\right] k_t(x,y) \d x \d y\\ 
\leq c_1 \Lambda (R-r)^{-\alpha} \int_{B_R}  v^2(t,x) \d x.
\end{multline*}
Applying \autoref{lem:guelle}(ii) we arrive at
\begin{multline*}
 \frac{-1}{1-q} \int_{B_R} \psi^{2}(x) \pt
v^2(t,x) \d x + \zeta_1(q) \iint\limits_{B_R\, B_R} [\psi(x) v(t,x) - \psi(y) v(t,y)]^2 k_t(x,y) \d x \d y \\
\leq c_1\Lambda (R-r)^{-\alpha} \int_{B_R}  v^2(t,x) \d x + \zeta_2(q) \iint\limits_{B_R\, B_R} (\psi(x)-\psi(y))^2 (v^2(t,x) + v^2(t,y)) k_t(x,y) \d x \d y\\
\end{multline*}
By the properties of $\psi$ and \eqref{eq:K-1} this implies
\begin{align*}
 - \int_{B_R} \psi^{2}(x) \pt
v^2(t,x) \d x &+ (1-q)\zeta_1(q) \iint\limits_{B_r\, B_r} [ v(t,x) -v(t,y)]^2 k_t(x,y) \d x \d y \\
&\leq c_2(d,\Lambda)(1-q) (R-r)^{-\alpha} \bigl(1+\zeta_2(q)\bigr) \int_{B_R}  v^2(t,x) \d x.
\end{align*}
We multiply this inequality with $\chi^2$, where $\chi \colon \R \to
[0,1]$ is defined by
$\chi(t)= \left( \frac{R^\alpha-t}{R^\alpha-r^\alpha} \wedge 1 \right) \vee 0$.
We integrate the resulting inequality from some $t \in I_\oplus(r)$ to
$R^\alpha$ and apply the same technique that we used to obtain
\eqref{eq:cacciopoli} from \eqref{eq:raum-absch} in \autoref{prop:step-neg}. As
a result we get
\begin{multline*}
\sup_{t \in I_{\oplus}(r)} \int_{B_{r}} v^2(t,x) \d x +
(1-q)\zeta_1(q) \int\limits_{Q_{\oplus}(r)} \int\limits_{B_{r}} \left[ v(s,x) - v(s,y) \right]^2 k_s(x,y) \d
x \d y \d s \\
\leq c_3(d,\Lambda) \left[ (1-q)(1+ \zeta_2(q)) (R-r)^{-\alpha} + 
\left(R^{\alpha}-r^\alpha\right)^{-1} \right] \int\limits_{Q_{\oplus}(R)} v^2(s,x) \d
x \d s.
\end{multline*}
We estimate the coefficients by
\begin{align*}
 (1-q)\zeta_1(q)&=\frac{2q}{3} \geq \frac23 \frac{\alpha_0}{d+2} =:
c_4(d,\alpha_0) \\
 (1-q)(1+ \zeta_2(q)) &\leq 1 + (1-q) \zeta_2(q) \leq 1 + 4q + \frac{9(1-q)}{q}
\leq 5 + 9 \frac{d+2}{\alpha_0} =: c_5(d,\alpha_0),
\end{align*}
which implies
\begin{multline*}
\sup_{t \in I_{\oplus}(r)} \int_{B_{r}} v^2(t,x) \d x +
c_4 \int\limits_{Q_{\oplus}(r)} \int\limits_{B_{r}} \left[ v(s,x) - v(s,y) \right]^2 k_s(x,y) \d
x \d y \d s \\
\leq c_6(d,\Lambda,\alpha_0) \left[ (R-r)^{-\alpha} + \left(R^{\alpha}-r^\alpha\right)^{-1} \right] \int\limits_{Q_{\oplus}(R)} v^2(s,x) \d
x \d s.
\end{multline*}
Applying Sobolev's inequality as in the proof of \autoref{prop:step-neg}  we obtain
\begin{multline*}
\int\limits_{Q_\oplus(r)} v^{2\kappa}(t,x) \d x \d t \leq c_7(d,\Lambda,\alpha_0) \left[ (R-r)^{-\alpha} + \left(R^{\alpha}-r^\alpha\right)^{-1} \right]^{1/\theta'}
\times \\ 
\times \left[ (R-r)^{-\alpha} + \left(R^{\alpha}-r^\alpha\right)^{-1}  +1\right]
\left[\int_{Q_{\oplus}(R)} v^2(s,x) \d x \d s \right]^{1+1/\theta'}.
\end{multline*}
We can estimate the coefficient by
\begin{multline*}
 \left[ (R-r)^{-\alpha} + \left(R^{\alpha}-r^\alpha\right)^{-1} \right]^{1/\theta'} +
 \left[ (R-r)^{-\alpha} + \left(R^{\alpha}-r^\alpha\right)^{-1} \right]^{\kappa} \\
\leq c_8(d,\Lambda,\alpha_0) \left[ (R-r)^{-\alpha} + (R^\alpha-r^\alpha)^{-1} \right]^\kappa\ .
\end{multline*}
This finishes the proof of \autoref{prop:step-pos} by resubstituting $q=1-p$ and $v=\widetilde u^{\frac{1-p}{2}}$.
\end{proof}
Note that $\kappa^{-1}$, the upper bound on $p$, can be replaced by any number less than $1$.

\subsection{An estimate for small positive moments of \texorpdfstring{$u$}{u}}

The aim of this subsection is to estimate the $L^1$-norm of supersolutions $u$
from above by the $L^1$-norm of $u^p$ for small values of $p>0$. 

\begin{sa}[Moser iteration II]
\label{thm:p-est} Let $\frac12\leq r<R\leq 1$ and $p \in (0,\kappa^{-1})$ with $\kappa=1+ \frac{\alpha}{d}$. Then there are constants $C,
\omega_1, \omega_2>0$ depending only on $d, \alpha_0, \Lambda$, such that for every nonnegative supersolution $u$ in $Q= I\times \Omega$, $Q \Supset Q_\oplus(R)$, the following estimate holds:
\begin{align}
\int_{Q_\oplus(r)} \widetilde u(t,x) \d x \d t &\leq \left( \frac{C}{\bet{Q_\oplus(1)} G_2(r,R)}
\right)^{1/p-1} \left( \int_{Q_{\oplus}(R)} \widetilde u^{p}(t,x) \d x \d t
\right)^{1/p}, \label{eq:p-est}
\end{align}
where $\widetilde u = u + \norm{f}_{L^\infty(Q(R))}$ and $G_2(r,R) =
(R-r)^{\omega_1} \wedge \left( \alpha_0 (R-r) \right)^{\omega_2}$.
\end{sa}

\begin{proof} We adopt the proof of \cite[Lemma 2.2]{zacher}. Without loss of
generality we may assume $\bet{Q_\oplus(1)}=1$; otherwise replace the measure
$\d x \otimes \d t$ in \eqref{eq:p-est} by $\bet{Q_\oplus(1)}^{-1}(\d x \otimes
\d t)$. 

Assume that $\alpha\geq 1$. Similar to the proof of \autoref{thm:inf-est} define
for $j=1,\ldots,n$
\[ p_j = \kappa^{-j} \qquad \text{and} \qquad r_j = r + \frac{R-r}{2^j}.\]
Furthermore, setting
$ \cM_\oplus(u;r,p) = \cM_\oplus(r,p) = \left( \int_{Q_\oplus(r)}
\bet{u(t,x)}^{p} \d x \d t \right)^{1/p}$,
the assertion \eqref{eq:moment-pos} of \autoref{prop:step-pos} reads
\[ \cM_\oplus(r,\kappa p) \leq A'^{1/p}
\cM_\oplus(R,p) \qquad \text{with } A' \text{ as in \eqref{eq:const-pos}}.\]

Iterating \eqref{eq:moment-pos} $n$ times, $n \in \N$, with $p_j$ and $r_j$ as
above we obtain
\begin{align*}
\cM_\oplus(r,1) &\leq \cM_\oplus(r_n,1)  = \cM_\oplus(r_n,p_1 \kappa) \leq
\left(\frac{C'\ 2^{\alpha n}}{(R-r)^{\alpha}}\right)^\kappa
\cM_\oplus(r_{n-1},p_1)\\
&\leq \left(\frac{C'\ 2^{\alpha n}  }{(R-r)^{\alpha}}\right)^\kappa \left(
\frac{C'\ 2^{\alpha(n-1)}}{(R-r)^{\alpha}}\right)^{\kappa^2}
\cM_\oplus(r_{n-2},p_2)
\leq \cM_\oplus(r_0,p_n) \prod_{j=1}^n \left(\frac{C'\
2^{\alpha(n-j+1)}}{(R-r)^{\alpha}}\right)^{\kappa^j}
\end{align*}
Employing the formulae
\begin{align*}
 \sum_{j=1}^n \kappa^j &= \frac{\kappa}{\kappa-1}\left(\frac{1}{p_n} - 1\right)
= \frac{d+\alpha}{\alpha} \left(\frac{1}{p_n} - 1\right) ,& \quad  \quad
\sum_{j=1}^n (n-j+1) \kappa^j &\leq \frac{\kappa^3}{(\kappa-1)^3} \left(
\frac{1}{p_n} -1\right),
\end{align*} 
we deduce
\begin{align}
 \cM_\oplus(r,1) \leq \left[  \frac{ 2^{\frac{\alpha \kappa^3}{(\kappa-1)^3}}
C'^{\frac{\kappa}{\kappa-1}}}{(R-r)^{ \frac{\alpha\kappa}{\kappa-1}}}
\right]^{\frac{1}{p_n}-1} \cM_\oplus(r_0,p_n). \label{eq:Adam}
\end{align}
Now, for $p \in (0,\kappa^{-1})$ fix $n \in \N$ such that $p_n \leq p <
p_{n-1}$. Thus
\begin{align}
 \frac{1}{p_n} - 1 &= \kappa^n-1 \leq \kappa^n + \kappa^{n-1} - \kappa -1 =
(1+\kappa)(\kappa^{n-1}-1) = (1+\kappa)\left( \frac{1}{p_{n-1}-1} \right)
\nonumber \\
 &\leq (1+\kappa) \left( \frac{1}{p}-1\right). \label{eq:Bernd}
\end{align}
Additionally we have by Hölder's inequality
\[ \cM_\oplus(r_0,p_n) = \cM_\oplus(R,p_n) \leq \bet{
Q_\oplus(R)}^{\frac{1}{p_n}-\frac{1}{p}} \cM_\oplus(R,p) = \cM_\oplus(R,p).\]  
Combining \eqref{eq:Adam}, \eqref{eq:Bernd} and the latter inequality we obtain
\begin{align*}
 \cM_\oplus(r,1) \leq \left[ 
\frac{ 2^{\frac{\alpha \kappa^3}{(\kappa-1)^3}}
C'^{\frac{\kappa}{\kappa-1}}}{(R-r)^{(d+\alpha)} }
\right]^{(1+\kappa)\left(\frac{1}{p}-1\right)} \left(
 \int_{Q_\oplus(1)} u^p \right)^{1/p}.
\end{align*}
Finally, since $\alpha \in (\alpha_0,2)$ and $\kappa \in (1+\alpha_0/d,1+2/d)$,
there is a constant $c=c(d,\alpha_0,C')$ such that
\begin{equation} \label{eq:riesenprodukt}
 \cM_\oplus(r,1) \leq \left[ \frac{c}{
(R-r)^{(1+\kappa)(d+\alpha)}} \right]^{\frac{1}{p}-1} \cM_\oplus(R,p).
\end{equation}
This proves \eqref{eq:p-est} in the case $\alpha\geq 1$ with $\omega_1 = 2 d + 6 + 4/d \geq (1+\kappa)(d+\alpha)$.

For $\alpha<1$ apply the same
arguments to the sequence of radii defined by $\widetilde r_j=\left( r^{\alpha} +
\frac{R^{\alpha}-r^\alpha}{2^j}\right)^{1/\alpha}$. For these radii the inequality corresponding to \eqref{eq:riesenprodukt} reads
\[ \cM_\oplus(r,1) \leq \left[ \frac{c}{
(R^\alpha-r^\alpha)^{(1+\kappa)\frac{d+\alpha}{\alpha}}} \right]^{\frac{1}{p}-1} \cM_\oplus(R,p).\]
This proves \eqref{eq:p-est} in the case $\alpha \in (\alpha_0,1)$ with $\omega_2=2d/\alpha_0 + 3 + 2/d \geq (1+\kappa)\frac{d+\alpha}{\alpha}$. 

As we can see from the proof, if $\alpha_0 \geq 1$, one may choose
$G_2(r,R)= (R-r)^{\omega_1}$. \autoref{thm:p-est} is proved.
\end{proof}
%
\section{An estimate for \texorpdfstring{$\log u$}{log u}}
\label{sec:estimates_log_u}
%
The following lemma provides a lower bound for the nonlocal term in \eqref{eq:def-solution} when applying $u^{-1}$ times some cut-off function as test function. See \cite[Proposition 4.9]{BBCK09} for a similar result.
\begin{lem}\label{lem:6} Let $I \subset \R$ and $\psi \colon \R^d \to [0,\infty)$ be a continuous function satisfying $\Supp[\psi] = \overline{B_R}$ for some $R>0$ and $ \sup_{t \in I} \cE_t(\psi,\psi) < \infty$. Then the following computation rule holds for $w \colon I \times \R^d \to [0,\infty)$:
\begin{align*}
\cE_t(w, -\psi^2 w^{-1}) \geq \iint\limits_{B_R\, B_R} \psi(x) \psi(y) \left( \log \frac{w(t,y)}{\psi(y)} - \log \frac{w(t,x)}{\psi(x)} \right)^2 k_t(x,y) \d x \d y - 3\ \cE_t(\psi,\psi)
\end{align*}
\end{lem}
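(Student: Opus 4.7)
The plan is to work out the algebraic identity for the pointwise integrand and then carefully decompose the integration domain according to whether $x,y$ lie in $B_R$ or not, since $\psi$ vanishes outside $B_R$. Write $a=w(t,x)$, $b=w(t,y)$, $p=\psi(x)$, $q=\psi(y)$, and on the set $\{\psi>0\}$ introduce $u=a/p$, $v=b/q$ (we treat $w$ as strictly positive; the general case is recovered by approximating $w$ by $w+\epsilon$ and passing to the limit). The integrand of $\cE_t(w,-\psi^2 w^{-1})$ is
\begin{equation*}
(b-a)\Bigl(\tfrac{p^2}{a}-\tfrac{q^2}{b}\Bigr)k_t(x,y)
\end{equation*}
and a direct expansion yields the key identity $(b-a)(p^2/a - q^2/b) = pq(u/v+v/u) - (p^2+q^2)$. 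Using $u/v+v/u = (u-v)^2/(uv) + 2$ this becomes
\begin{equation*}
(b-a)\Bigl(\tfrac{p^2}{a}-\tfrac{q^2}{b}\Bigr) = \frac{pq(u-v)^2}{uv} - (p-q)^2.
\end{equation*}

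Next I would split the domain of integration into $B_R\times B_R$, $B_R\times B_R^c$, $B_R^c\times B_R$ and $B_R^c\times B_R^c$. The last piece contributes zero since $\psi$ vanishes there. On $B_R\times B_R^c$ the factor $q$ vanishes, so the integrand equals $(b-a)p^2/a = bp^2/a - p^2 \geq -p^2$, since $a,b,p\geq 0$; the symmetric bound holds on $B_R^c\times B_R$. Using the symmetry of $k_t$ these two off-diagonal contributions are equal, and comparison with the expansion
\begin{equation*}
\cE_t(\psi,\psi) = \iint\limits_{B_R B_R}(p-q)^2 k_t + 2\iint\limits_{B_R B_R^c}\psi^2(x)k_t(x,y)\d y\d x
\end{equation*}
allows me to bound $2\iint_{B_R B_R^c}\psi^2(x)k_t \geq -\bigl(\cE_t(\psi,\psi)-\iint_{B_RB_R}(p-q)^2 k_t\bigr)$. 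Combining with the in-ball integrand gives
\begin{equation*}
\cE_t(w,-\psi^2w^{-1}) \geq \iint\limits_{B_R B_R}\frac{\psi(x)\psi(y)(u-v)^2}{uv}k_t(x,y)\d x\d y - \cE_t(\psi,\psi),
\end{equation*}
where the two $(p-q)^2$-contributions cancel.

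Finally, I would invoke the elementary inequality $(u-v)^2 \geq uv(\log u -\log v)^2$ valid for all $u,v>0$. This follows from $e^s+e^{-s}-2 = 2(\cosh s-1) \geq s^2$ applied to $s=\log(u/v)$, since $(u-v)^2/(uv) = u/v+v/u-2$. Substituting $u=w(t,x)/\psi(x)$, $v=w(t,y)/\psi(y)$ produces exactly the desired lower bound with $-\cE_t(\psi,\psi)$, which is stronger than the claimed $-3\cE_t(\psi,\psi)$; the constant $3$ is evidently chosen with some slack and presents no difficulty.

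The main obstacle is really just bookkeeping: the algebraic identity is the substance of the argument, but one has to track the cancellation of the $(p-q)^2$ terms between the interior of $B_R\times B_R$ and the boundary contribution from the complement, and verify that the positivity assumption on $w$ lets us carry out division by $ab$ throughout. The standard technicality of replacing $w$ by $w+\epsilon$ and passing to the limit (which requires $\cE_t(\psi,\psi)<\infty$ and the support condition on $\psi$) handles the case when $w$ only satisfies $w\geq 0$.
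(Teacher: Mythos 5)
Your proof is correct and follows essentially the same route as the paper's: the same algebraic identity for the integrand, the same decomposition of $\R^d\times\R^d$ according to membership in $B_R$, the same use of $w\geq 0$ to bound the $B_R\times B_R^c$ piece, and the same elementary inequality $(u-v)^2/(uv)\geq(\log u-\log v)^2$. The one substantive improvement is your bookkeeping: by tracking the exact identity
\begin{equation*}
\cE_t(\psi,\psi)=\iint\limits_{B_R\,B_R}\bigl(\psi(x)-\psi(y)\bigr)^2k_t\,\d x\d y+2\iint\limits_{B_R\,B_R^c}\psi^2(x)\,k_t\,\d y\d x
\end{equation*}
and letting the $(p-q)^2$ contribution from $B_R\times B_R$ cancel exactly against the off-diagonal bound, you obtain the sharper constant $-\cE_t(\psi,\psi)$, whereas the paper dominates the two pieces separately by $-\cE_t(\psi,\psi)$ and $-2\cE_t(\psi,\psi)$ and settles for $-3\cE_t(\psi,\psi)$. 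Since the constant $3$ is harmless in the applications, this is a cosmetic gain, but the observation is correct.
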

\begin{bem}
We apply the rule above only in cases where all terms are finite. 
\end{bem}

\begin{proof} Fix $t\in I$. First of all we note that
\begin{align}
\cE_t(w, &-\psi^2 w^{-1}) = \iint\limits_{\R^d\, \R^d} \left[ w(t,y)-w(t,x) \right] \left[ \psi^2(x) w^{-1}(t,x) - \psi^2(y) w^{-1}(t,y)\right] k_t(x,y) \d y \d x \nonumber \\
&\geq \iint\limits_{B_R\, B_R} \psi(x)\psi(y) \left[ \frac{\psi(x) w(t,y)}{\psi(y) w(t,x)} + \frac{\psi(y) w(t,x)}{\psi(x) w(t,y)} - \frac{\psi(y)}{\psi(x)} - \frac{\psi(x)}{\psi(y)}\right] k_t(x,y) \d y \d x \nonumber \\
& \qquad +2 \iint\limits_{B_R\, B_R^c}  \left[ w(t,y)-w(t,x) \right] \left[ \psi^2(x) w^{-1}(t,x) - \psi^2(y) w^{-1}(t,y)\right] k_t(x,y) \d y \d x  \label{eq:4-1}\\
& \qquad +\iint\limits_{B_R^c\, B_R^c} \left[ w(t,y)-w(t,x) \right] \left[ \psi^2(x) w^{-1}(t,x) - \psi^2(y) w^{-1}(t,y)\right] k_t(x,y) \d y \d x \nonumber
\end{align}
Because of $\Supp[\psi] = \overline{B_R}$ the third term on the right-hand side vanishes.

To estimate the first term on the right-hand side we apply the inequality
\[ \frac{(a-b)^2}{ab} = (a-b) \left(b^{-1}-a^{-1}\right) \geq \left( \log a - \log b \right)^2\qquad \text{for } a,b >0 \]
to $a=A_t(x,y)= \frac{w(t,y)}{w(t,x)}$ and $b=B(x,y)=\frac{\psi(y)}{\psi(x)}$, $x,y \in B_{R}$:\enlargethispage{3em}
\begin{align*}
 \frac{\psi(x) w(t,y)}{\psi(y) w(t,x)} &+ \frac{\psi(y) w(t,x)}{\psi(x) w(t,y)} - \frac{\psi(y)}{\psi(x)} - \frac{\psi(x)}{\psi(y)} \\
 &=   \frac{A_t(x,y)}{B(x,y)} + \frac{B(x,y)}{A_t(x,y)} - 2 - \left( \sqrt{B(x,y)}- \frac{1}{\sqrt{B(x,y)}} \right)^2 \\
 &\geq \left( \log \frac{w(t,y)}{\psi(y)} - \log \frac{w(t,x)}{\psi(x)} \right)^2 - \left( \frac{\psi(x)}{\psi(y)} + \frac{\psi(y)}{\psi(x)} - 2 \right).
\end{align*}
Hence,
\begin{multline} \label{eq:4-3}
 \iint\limits_{B_R\, B_R} \psi(x)\psi(y) \left[ \frac{\psi(x) w(t,y)}{\psi(y) w(t,x)} + \frac{\psi(y) w(t,x)}{\psi(x) w(t,y)} - \frac{\psi(y)}{\psi(x)} - \frac{\psi(x)}{\psi(y)}\right] k_t(x,y) \d y \d x \\
 \geq \iint\limits_{B_R\, B_R} \psi(x)\psi(y) \left( \log \frac{w(t,y)}{\psi(y)} - \log \frac{w(t,x)}{\psi(x)} \right)^2 \d y \d x - \cE_t(\psi,\psi). 
\end{multline}
Finally, we estimate the second term using the nonnegativity of $w(t,\cdot)$ in $\R^d$:
\begin{align}
 \iint\limits_{B_R\, B_R^c} &\left[ w(t,y)-w(t,x) \right] \left[ \psi^2(x) w^{-1}(t,x) - \psi^2(y) w^{-1}(t,y)\right] k_t(x,y) \d y \d x  \nonumber \\
&= \iint\limits_{B_R\, B_R^c} \left[ w(t,y)-w(t,x) \right] \left[ \psi^2(x) w^{-1}(t,x) \right] k_t(x,y) \d y \d x \nonumber \\
&= \int_{B_R} \frac{\psi^2(x)}{w(t,x)} \int_{B_R^c} w(t,y) k_t(x,y) \d y \d x - \int_{B_R} \psi^2(x) \int_{B_R^c} k_t(x,y) \d y \d x \nonumber \\
&\geq - \int_{B_R} \int_{B_R^c} \left[ \psi(x)-\psi(y)\right]^2 k_t(x,y) \d y \d x \geq - \cE_t(\psi,\psi). \label{eq:4-4}
\end{align}
Applying the estimates \eqref{eq:4-3} and \eqref{eq:4-4} in \eqref{eq:4-1} finishes the proof of \autoref{lem:6}.
\end{proof}
\begin{prop} \label{prop:loglemma} Assume $k \in \cK(\alpha_0,\Lambda)$ for some $\alpha_0 \in (0,2)$ and $\Lambda \geq 1$. Then there is $C=C(d,\alpha_0,\Lambda)>0$ such that for every supersolution $u$ of \eqref{eq:par_equation} in $Q = (-1,1)\times B_2(0)$ which satisfies $u \geq \epsilon >0$ in $(-1,1) \times \R^d$, there is a constant $a=a(\widetilde u) \in \R$ such that the following inequalities hold simultaneously:
\begin{subequations}
\begin{align} 
\forall s >0 \colon (\d t \otimes \d x)\left( Q_{\oplus}(1) \cap \left\{ \log \widetilde u < -s-a \right\} \right) \leq \frac{C \bet{B_1} }{s}, \label{eq:log-neg} \\
\forall s >0 \colon (\d t \otimes \d x)\left( Q_{\ominus}(1) \cap \left\{ \log \widetilde u > s-a \right\} \right) \leq \frac{C \bet{B_1}}{s}, \label{eq:log-pos}
\end{align}
\end{subequations}
where $ \widetilde u = u + \norm{f}_{L^\infty(Q)}$.
\end{prop}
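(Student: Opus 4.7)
The plan is to derive a differential inequality for the $\Psi$-weighted spatial average of $\log\widetilde u$, pick $a$ so that this average vanishes at $t=0$, and then combine a pointwise-in-time Chebyshev estimate with a telescoping integration to obtain the distributional bounds.

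First, I plug the nonnegative test function $\phi(t,x) = \psi^2(x)/\widetilde u(t,x)$ into \eqref{eq:steklov-solution}, where $\psi$ is supported in $\overline{B_{3/2}}$ and chosen so that $\psi^2 = \Psi$ (with an appropriate regularisation near $\partial B_{3/2}$; see the obstacle below). The parabolic term becomes $\tfrac{d}{dt}\int\Psi\log\widetilde u\,dx$, and the forcing contribution is controlled by $|f|/\widetilde u \leq 1$. For the bilinear form I apply \autoref{lem:6} with $w=\widetilde u$, obtaining
\begin{equation*}
 \cE_t(\widetilde u, -\psi^2/\widetilde u) \geq \iint_{B_{3/2}\,B_{3/2}} \psi(x)\psi(y)\bigl(\log(\widetilde u/\psi)(y)-\log(\widetilde u/\psi)(x)\bigr)^2 k_t\,dx\,dy - 3\,\cE_t(\psi,\psi).
\end{equation*}
Since $\psi(x)\psi(y)=\sqrt{\Psi(x)\Psi(y)}\geq\Psi(x)\wedge\Psi(y)$, the weighted Poincaré inequality \autoref{lem:poincare} applied to $v:=\log(\widetilde u/\psi)$ can be inserted. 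Writing $v-v_\Psi = (W-\bar W) - (\log\psi-(\log\psi)_\Psi)$ with $W:=\log\widetilde u$, $\bar W(t):=W_\Psi(t)$, and using $(a-b)^2\geq\tfrac12 a^2-b^2$, I obtain
\begin{equation*}
 \bar W'(t) \geq c_1 \Phi(t) - c_2, \qquad \Phi(t) := \frac{1}{|\Psi|_1}\int(W-\bar W(t))^2\Psi\,dx,
\end{equation*}
with $c_1,c_2>0$ depending only on $d,\alpha_0,\Lambda$.

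Second, set $a := -\bar W(0)$ and $V(t,x):=-W(t,x)-a$. Then $\bar V(0)=0$ and $\bar V'(t)\leq c_2-c_1\Phi(t)$. The events in \eqref{eq:log-neg}, \eqref{eq:log-pos} translate into $\{V>s\}\cap Q_\oplus$ and $\{V<-s\}\cap Q_\ominus$, respectively. Integration of the differential inequality yields $\bar V(t)\leq c_2 t\leq c_2$ on $[0,1]$ and $\bar V(t)\geq c_2 t\geq -c_2$ on $[-1,0]$; for $s\geq 2c_2$ the mean $\bar V$ stays well away from the threshold, while for $s\leq 2c_2$ the desired estimates are trivial once $C\geq 2c_2$, since the left-hand sides are bounded by $|Q_\oplus|=|B_1|$.

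Third (the main step), for each $t\in(0,1)$ the pointwise Chebyshev inequality combined with $\Psi\geq\tfrac12$ on $B_1$ gives
\begin{equation*}
 \bet{\{x\in B_1\colon V(t,x)>s\}} \leq \frac{\int_{B_1}(V-\bar V(t))^2\,dx}{(s-\bar V(t))^2} \leq \frac{2|\Psi|_1\Phi(t)}{(s-\bar V(t))^2}.
\end{equation*}
Substituting $c_1\Phi(t)\leq c_2-\bar V'(t)$ and integrating over $(0,1)$,
\begin{equation*}
 \bet{\{V>s\}\cap Q_\oplus} \leq \frac{2|\Psi|_1}{c_1}\left[c_2\int_0^1\frac{dt}{(s-\bar V(t))^2} + \int_0^1\frac{-\bar V'(t)}{(s-\bar V(t))^2}\,dt\right].
\end{equation*}
The drift integral is bounded by $4c_2/s^2$ via $s-\bar V\geq s/2$. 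The fluctuation integral is the exact antiderivative $\tfrac1s - \tfrac{1}{s-\bar V(1)}$; a short case analysis shows it is $\leq 1/s$ regardless of the sign of $\bar V(1)$ (when $\bar V(1)\geq 0$ it is non-positive, and when $\bar V(1)<0$ the ratio $|\bar V(1)|/(s-\bar V(1))$ is strictly less than $1$). Hence $|\{V>s\}\cap Q_\oplus|\leq C|B_1|/s$, proving \eqref{eq:log-neg}. The argument on $Q_\ominus$ is completely symmetric: one uses $s+\bar V(t)\geq s/2$ on $(-1,0)$ and the antiderivative $(s+\bar V(t))^{-1}$ to obtain \eqref{eq:log-pos}.

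The main obstacle is the technical compatibility of \autoref{lem:6}, which requires $\sup_t\cE_t(\psi,\psi)<\infty$, with \autoref{lem:poincare}, whose weight is $\Psi$: the naïve choice $\psi=\sqrt\Psi$ is only $\tfrac12$-Hölder near $\partial B_{3/2}$, so the Dirichlet energy can diverge when $\alpha$ approaches $2$. A fix is to work with a $C^{1,1}$ profile that agrees with $\Psi$ on a slightly smaller ball and whose square root is Lipschitz, and to verify that the weighted Poincaré inequality still applies to this modified weight with a constant depending only on $d,\alpha_0,\Lambda$. The correction term $\int(\log\psi-(\log\psi)_\Psi)^2\Psi\,dx$ is then a finite absolute constant and plays no role in the final argument.
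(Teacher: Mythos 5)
Your proposal follows the paper's argument in all essential respects: the same test function $\phi=\psi^2/\widetilde u$, the same appeal to \autoref{lem:6} followed by the weighted Poincaré inequality \autoref{lem:poincare}, and a Chebyshev-plus-integration argument on the $\Psi$-averaged quantity that is equivalent (up to a cosmetic change of variable $V=-\log\widetilde u-a$ versus the paper's $w=v-c_2t$, $W=V-c_2t$) to the Bombieri--Giusti step in the paper. Your handling of the $\log\psi$ correction via $(a-b)^2\geq\tfrac12 a^2-b^2$ is cleaner than the paper's silent absorption of it on $B_1$, and your split of the integrated level-set bound into a drift piece (bounded by $4c_2/s^2\leq 2/s$ for $s\geq 2c_2$) and a telescoping fluctuation piece (bounded by $1/s$) is correct. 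The only point you leave implicit is the case where the average is merely continuous rather than differentiable in $t$; the paper handles this with a short approximation argument, and it is worth including.

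The point you raise about $\sup_t\cE_t(\psi,\psi)$ is a genuine and, as far as I can tell, real gap in the paper's proof as written. For $\psi=\sqrt{\Psi}$ with $\Psi(x)=(\tfrac32-|x|)\wedge 1$, the function $\psi$ behaves like $(\tfrac32-|x|)^{1/2}$ near $\partial B_{3/2}$, and a direct computation (reduce to the 1D model $\psi(x)=\sqrt{x_+}$ and substitute $u=\sqrt{x}$, $v=\sqrt{y}$) shows
\[
\iint \frac{\bigl[\psi(x)-\psi(y)\bigr]^2}{|x-y|^{d+\alpha}}\,dx\,dy \asymp \frac{1}{(2-\alpha)^2},
\]
so that $\cE_t(\psi,\psi)\asymp (2-\alpha)\cdot(2-\alpha)^{-2}=(2-\alpha)^{-1}$ by \eqref{eq:K-2}. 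This is finite for each fixed $\alpha<2$ (so \autoref{lem:6} is applicable), but it is not bounded uniformly in $\alpha\in(\alpha_0,2)$, contradicting the paper's claim $\sup_t\cE_t(\psi,\psi)\leq C(d,\alpha_0,\Lambda)$ and, if left unrepaired, undermining the $\alpha$-robustness that is a central selling point of the paper. Your proposed fix is correct in spirit, but state it more precisely: take a Lipschitz cut-off $\psi$ (e.g. $\psi(x)=\bigl((\tfrac32-|x|)\wedge 1\bigr)\vee 0$ itself rather than its square root) so that $\cE_t(\psi,\psi)\lesssim \Lambda$ uniformly by \eqref{eq:K-1}, and run the argument with the weight $\psi^2(x)=\bigl((\tfrac32-|x|)^2\wedge 1\bigr)\vee 0$. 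This weight still admits a layer-cake representation over balls, $\psi^2(x)=\int_1^{3/2}2(\tfrac32-s)\Ind{B_s}(x)\,ds$ on $B_{3/2}\setminus\overline{B_1}$, so the proof of \autoref{lem:poincare} adapts verbatim with a constant depending only on $d,\alpha_0,\Lambda$, and since $\psi^2\geq\tfrac14$ on $B_1$ the Chebyshev step on $B_1$ is unchanged. With this adjustment the proof is complete.
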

\begin{proof} In the course of the proof we introduce constants $c_1,c_2,c_3,c_4$ that may depend on $d$, $\alpha_0$, and $\Lambda$.
We use the test function $\phi(t,x)=\psi^2(x) \widetilde u^{-1}(t,x)$ in \eqref{eq:steklov-solution}, where 
\[\psi^2(x)=\left[ \left(\tfrac32-\bet{x}\right) \wedge 1\right] \vee 0,\qquad x \in \R^d,\]
and we write $v(t,x)=-\log \frac{\widetilde u(t,x)}{\psi(x)}$. Thus we have for a.e. $t \in (-1,1)$
\begin{align*}
\int_{B_{3/2}} \psi^2(x) \pt v(t,x) \d x + \cE_t(\widetilde u,-\psi^2 \widetilde u^{-1}) \leq - \int_{B_{3/2}} \psi^2(x) \widetilde u^{-1}(t,x) f(t,x) \d x.
\end{align*}
Note that $\cE_t(\widetilde u,-\psi^2 \widetilde u^{-1})$ is finite since
$\widetilde u(t,\cdot) \in H^{\alpha/2}_{loc}(B_2)$ for a.e. $t \in (-1,1)$ and
$\supp \psi = \overline{B_{3/2}}$.
Applying \autoref{lem:6} and $\norm{f/\widetilde u}_{L^{\infty}(Q)} \leq 1$ we obtain
\begin{multline*}
\int_{B_{3/2}} \psi^2(x) \pt v(t,x) \d x\ +\\
+ \iint\limits_{B_{3/2}\, B_{3/2}} \psi(x) \psi(y) \left[ v(t,y) - v(t,x) \right]^2 k_t(x,y) \d x \d y
\leq \bet{B_{3/2}} + 3 \cE_t(\psi,\psi).
\end{multline*}
Now we apply the weighted Poincaré-inequality, \autoref{lem:poincare}, to the second term and use the fact $\sup\limits_{t \in(-1,1)} \cE_t(\psi,\psi)\leq C$ for some constant $C=C(d,\alpha_0,\Lambda)$. We obtain
\[
 \int_{B_{3/2}} \psi^2(x) \pt v(t,x) \d x + c_1 \int_{B_{3/2}} \left[ v(t,x) - V(t) \right]^2 \psi^2(x) \d x
 \leq c_2 \bet{B_1}, \]
where \[ V(t) = \frac{\int_{B_{3/2}} v(t,x) \psi^2(x)\d x}{\int_{B_{3/2}} \psi^2(x)\d x}.\]
The proof now proceeds as in the case of local operators. Our presentation uses ideas from \cite[pp. 120-123]{moser-alt} and \cite[Lemma 5.4.1]{saloff}.

Integrating the above inequality over $[t_1,t_2] \subset (-1,1)$ yields
\begin{equation} \label{eq:nonlocal-finished}
 \left[ \int_{B_{3/2}} \psi^2(x) v(t,x) \d x \right]_{t=t_1}^{t_2} + c_1 \int_{t_1}^{t_2} \int_{B_{3/2}} \left[ v(t,x) - V(t) \right]^2 \psi^2 \d x \leq c_2(t_2-t_1) \bet{B_1}\ .
\end{equation}
Dividing by $\int_{B_{3/2}} \psi^2$, using $\int_{B_{3/2}} \psi^2 \leq 2^d \bet{B_1}$ and $\psi=1$ in $B_1$, we obtain
\begin{align} 
 V(t_2)-V(t_1) + \frac{c_3^{-1}}{\bet{B_1} } \int_{t_1}^{t_2} \int_{B_1} \left[ v(t,x) - V(t) \right]^2 \d x \d t \leq c_2(t_2-t_1) \quad \text{with } c_3=\frac{2^d}{c_1}, \label{eq:V-diff}
 \intertext{or equivalently}
 \frac{V(t_2)-V(t_1)}{t_2-t_1} + \frac{c_3^{-1}}{\bet{B_1} (t_2-t_1)} \int_{t_1}^{t_2} \int_{B_1} \left[ v(t,x) - V(t) \right]^2 \d x \d t \leq c_2 
\end{align}
Assume that $V(t)$ is differentiable. Taking the limit $t_2  \to t_1$ the above inequality yields
\begin{equation} \label{eq:abl-V}
V'(t) + \frac{c_3^{-1}}{\bet{B_1}} \int_{B_1} \left[ v(t,x) - V(t) \right]^2 \d x \leq c_2,\qquad \text{for a.e. } t \in (-1,1).
\end{equation}

Now set
\[ w(t,x) = v(t,x) - c_2 t,\qquad W(t)=V(t)-c_2 t, \]
such that \eqref{eq:abl-V} reads
\begin{equation} \label{eq:abl-W}
 W'(t) + \frac{c_3^{-1}}{\bet{B_1}} \int_{B_1} \left[ w(t,x) - W(t) \right]^2 \d x \leq 0 \text{ for a.e. } t \in (-1,1),\qquad W(0)=a,
\end{equation}
where $a$ is a constant depending on $u$. Note that by the latter inequality $W$ is nonincreasing in $(-1,1)$.

We work out here the details for the proof of \eqref{eq:log-neg}. It is straightforward to mimic the arguments for the proof of \eqref{eq:log-pos}. Define for $t \in (0,1)$ and $s>0$ the set 
\begin{equation} 
 L^\oplus_s(t)=\left\{ x \in B_1(0)\colon w(t,x) > s + a\right\}.
\end{equation}
Noting that $W(t)\leq a$ for a.e. $t \in (0,1)$, we obtain for such $t$ and $x \in L^\oplus_s(t)$
\[  w(t,x) - W(t) \geq s + a - W(t) > 0.\]
Using this in \eqref{eq:abl-W} yields
\[ W'(t) + \frac{c_3^{-1}}{\bet{B_1}} \bet{L^\oplus_s(t)} (s +a-W(t))^2 \leq 0,\]
which is equivalent to
\[ \frac{-c_3\, W'(t)}{(s+a-W(t))^2} \geq \frac{\bet{L^\oplus_s(t)}}{\bet{B_1}}\ .\]
Intergrating this inequality over $t \in (0,1)$ we obtain
\[ \frac{c_3}{s}  \geq \left[ \frac{c_3}{s+a-W(\tau)}\right]_{\tau=0}^1 \geq \frac{1}{\bet{B_1}} \int_{0}^1 \bet{L^\oplus_s(t)} \d t = \frac{\bet{  Q_{\oplus}(1) \cap \left\{ w > s + a \right\}}}{\bet{B_1}}\]
and replacing $w$ again by $w(t,x) = v(t,x) - c_2 t = -\log \widetilde u - c_2 t$ in $Q_\oplus(1)$ yields
\begin{equation} \label{eq:Walter}
 \bet{ Q_{\oplus}(1) \cap \left\{ \log  \widetilde u + c_2 t < -s - a \right\}} \leq  \frac{c_3 \bet{B_1}}{s}.
\end{equation}
Finally,
\begin{align*}
\bet{  Q_{\oplus}(1) \cap \left\{ \log \widetilde u < -s - a \right\}} &\leq \bet{ Q_{\oplus}(1) \cap \left\{ \log \widetilde u + c_2 t  < -s/2 - a \right\}} + \bet{  Q_{\oplus}(1) \cap \left\{  c_2 t  > s/2 \right\}} \\
&\leq \frac{2c_3}{s} \bet{B_1} + \left(1-\frac{s}{2c_2}\right) \bet{B_1} \leq \frac{c_4}{s}. 
\end{align*}

In case that $V$ is only continuous in $(-1,1)$ we derive the result in a different manner, cf. \cite[Lemma 6.21]{lieberman}:
For $\epsilon_0>0$ there is $\delta>0$ such that for $t_2 < t_1 + \delta$
\[ \bet{v(t,x) - V(t)}^2 \leq 2 \bet{v(t,x) - V(t_2)}^2 + 2 \bet{V(t_2)-V(t)}^2 \leq 2 \bet{v(t,x) - V(t_2)}^2 + 2 \epsilon_0^2.\]
Hence, by \eqref{eq:V-diff} we obtain for $t_2 < t_1 + \delta$
\[ V(t_2)-V(t_1) + \frac{c_3^{-1}}{\bet{B_1} } \int_{t_1}^{t_2} \int_{B_1} \left[ v(t,x) - V(t_2) \right]^2 \d x \d t \leq \left( 2 c_2+ 2 c_3^{-1} \epsilon_0^2\right) (t_2-t_1).  \]
Defining 
\[ w(t,x) = v(t,x) - \left( 2 c_2+ 2 c_3^{-1} \epsilon_0^2\right) t,\qquad W(t)=V(t)- \left( 2 c_2+ 2 c_3^{-1} \epsilon_0^2\right) t, \]
the latter inequality reads
\[ W(t_2) - W(t_1) + \frac{c_3^{-1}}{\bet{B_1} } \int_{t_1}^{t_2} \int_{B_1} \left[ w(t,x) - W(t_2) + (2 c_2+ 2 c_3^{-1} \epsilon_0^2) (t_2-t)  \right]^2 \d x \d t \leq 0. \]
Using the fact that for $t,t_2 \in (0,1)$ and $x \in L_s^\oplus(t)$ we have $w(t,x) - W(t_2) > s + a - W(t_2) \geq 0$, we can omit the term $(2 c_2+ 2 c_3^{-1} \epsilon_0^2) (t_2-t)$ in the integral and deduce that for $t_2 < t_1 + \delta$
\begin{align*}
 \frac{W(t_2)-W(t_1)}{(s+a - W(t_2))^2} + \frac{c_3^{-1}}{\bet{B_1} } \int_{t_1}^{t_2} \bet{L_s^\oplus(t)} \d t \leq 0.
\end{align*}
Again, since $W$ is nonincreasing, this implies
\begin{equation} 
\begin{split}
\label{eq:W-step}
 \frac{c_3^{-1}}{\bet{B_1} } \int_{t_1}^{t_2} \bet{L_s^\oplus(t)} \d t &\leq \frac{W(t_1)-W(t_2)}{(s+a - W(t_1))(s+a - W(t_2))}\\
 &= \frac{1}{s+a-W(t_1)} - \frac{1}{s+a-W(t_2)}\ .
\end{split}
\end{equation}
Choosing $k \in \N$ such that $\frac1k < \delta$, writing
\[ \int_{0}^1 \bet{L_s^\oplus(t)} \d t = \sum_{j=0}^{k-1} \int_{\frac{j}{k}}^{\frac{j+1}{k}} \bet{L_s^\oplus(t)} \d t,\]
and applying \eqref{eq:W-step} in each summand, we establish \eqref{eq:Walter}. Using the same arguments as above we establish \eqref{eq:log-neg}.
This finishes the proof of \autoref{prop:loglemma}.
\end{proof}
%
\section{Proof of the Harnack inequality}\label{sec:harnack}

The aim of this section is to prove \autoref{thm:harnack}. Our proof relies on
the well-known idea of Bombieri and Giusti. The following abstract lemma extends
the idea of \cite{bombieri} to the parabolic case. It was first proved in \cite[pp. 731-733]{moser-neu}. The version below can be found in \cite[Section 2.2.3]{saloff}.
\begin{lem} \label{prop:bombieri}
Let $(U(r))_{\theta\leq r\leq 1}$ be a nondecreasing family of domains $U(r)
\subset \R^{d+1}$ and let $m, c_0$ be positive constants, $\theta \in
[1/2,1]$, $\eta \in (0,1)$ and $0<p_0\leq \infty$. Furthermore assume that $w$
is a positive, measurable function defined on $U(1)$ which satisfies
\begin{align} \label{eq:ass1}
\Bigl( \int_{U(r)} w^{p_0} \Bigr)^{1/p_0} \leq \left( \frac{c_0}{(R-r)^m
\bet{U(1)}}\right)^{1/p-1/p_0} \Bigl( \int_{U(R)} w^p \Bigr)^{1/p} < \infty .
\end{align}
for all $r,R \in [\theta,1], r<R$ and for all $p \in (0,1 \wedge \eta p_0)$.

Additionally suppose that
\begin{equation} \label{eq:ass2}
\forall s>0 \colon\quad \bet{ U(1) \cap \{ \log w > s \}} \leq \frac{c_0}{s}
\bet{U(1)}.
\end{equation}
Then there is a constant $C=C(\theta,\eta,m,c_0, p_0)$ such that
\begin{equation} \label{eq:ergebnis}
 \Bigl( \int_{U(\theta)} w^{p_0} \Bigr)^{1/p_0} \leq C \bet{U(1)}^{1/p_0}\ .
\end{equation}
\end{lem}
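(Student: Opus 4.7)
The plan is to combine the reverse-Hölder inequality \eqref{eq:ass1} with the weak bound \eqref{eq:ass2} to derive a self-improving inequality for
\[ \Phi(r) := \norm{w}_{L^{p_0}(U(r))} \]
and then iterate along a geometric sequence of radii, following the classical scheme of Bombieri and Giusti \cite{bombieri}. After normalizing so that $\bet{U(1)} = 1$, the decisive step is a level-set splitting on the right-hand side of \eqref{eq:ass1}.

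Concretely, I would fix $p \in (0, 1 \wedge \eta p_0)$ and a cutoff $M > 1$, and write
\[ \int_{U(R)} w^p \d x = \int_{U(R) \cap \{w \leq M\}} w^p \d x + \int_{U(R) \cap \{w > M\}} w^p \d x. \]
The first integral is bounded by $M^p$, and for the second Hölder's inequality with conjugate exponents $p_0/p$ and $p_0/(p_0-p)$ followed by \eqref{eq:ass2} at $s = \log M$ yields
\[ \int_{U(R) \cap \{w > M\}} w^p \d x \leq \Phi(R)^p \bet{\{w > M\} \cap U(R)}^{1-p/p_0} \leq \Phi(R)^p \left(\frac{c_0}{\log M}\right)^{1-p/p_0}. \]
Inserting this into \eqref{eq:ass1} and abbreviating $K := (c_0/(R-r)^m)^{1/p - 1/p_0}$ yields
\[ \Phi(r)^p \leq K^p M^p + K^p \left(\frac{c_0}{\log M}\right)^{1-p/p_0} \Phi(R)^p. \]
The key maneuver is to choose $\log M$ proportional to $(R-r)^{-m}$, with a proportionality constant depending only on $\eta$ and chosen large enough that the coefficient in front of $\Phi(R)^p$ is at most $\tfrac12$. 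Taking $p$-th roots produces the self-improving inequality
\[ \Phi(r) \leq 2^{-1/p}\, \Phi(R) + B(R-r, p) \qquad \text{for all } \theta \leq r < R \leq 1, \]
with an explicit (though possibly large) error term $B$.

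The final step is to iterate along $r_k = \theta + (1-\theta)(1-2^{-k})$, $k = 0, 1, 2, \ldots$, using the strong contraction factor $2^{-1/p} < \tfrac12$ at each step; the remainder $2^{-N/p} \Phi(r_N)$ is absorbed by one more application of \eqref{eq:ass1} on the outermost domain, where the assumed finiteness in \eqref{eq:ass1} supplies the a priori control of $\Phi(r_N)$. The hard part will be the bookkeeping: $B(R-r, p)$ blows up rapidly as $R-r \to 0$, so a careful coupling between $p$ (possibly varying with $k$) and the decay of $r_{k+1}-r_k$ is needed to make the series $\sum_k 2^{-k/p} B(r_{k+1}-r_k, p)$ converge. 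This is the classical Bombieri--Giusti iteration, and it yields the conclusion $\Phi(\theta) \leq C \bet{U(1)}^{1/p_0}$ with $C = C(\theta, \eta, m, c_0, p_0)$.
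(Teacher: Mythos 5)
The paper does not give a proof of this lemma; it refers to Moser (pp.\ 731--733 of \cite{moser-neu}) and to Saloff-Coste (Section 2.2.3 of \cite{saloff}), so your attempt must be compared against those arguments rather than against anything in this article.

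Your decomposition by level sets and the use of H\"older together with \eqref{eq:ass2} is indeed the right opening move, but the choice of the truncation level $M$ is wrong, and this is not a bookkeeping issue --- it is a gap that cannot be closed along the route you describe. You take $\log M \asymp (R-r)^{-m}$ so that the coefficient in front of $\Phi(R)^p$ drops below $\tfrac12$. That forces the additive error term to be of size roughly $K\,M \asymp (R-r)^{-m(1/p-1/p_0)}\exp\bigl(c\,(R-r)^{-m}\bigr)$, i.e.\ doubly exponential in $-\log(R-r)$. Along any admissible sequence of radii with $\sum_k (r_{k+1}-r_k)<\infty$ one necessarily has $(r_{k+1}-r_k)^{-m}\to\infty$ at least polynomially in $k$, so the errors $B_k$ grow at least like $\exp(c\,k^{\gamma})$ with $\gamma\ge 1$, while the contraction you extract from taking $p$-th roots is only the fixed factor $\tfrac12$ (not $2^{-1/p}$; note for $p<1$ one has $(a+b)^{1/p}\le 2^{1/p-1}\bigl(a^{1/p}+b^{1/p}\bigr)$, which turns $\tfrac12\Phi(R)^p$ into $\tfrac12\Phi(R)$, so the per-step gain is exactly $\tfrac12$). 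The series $\sum_k 2^{-k} B_k$ therefore diverges, and since $M$ does not depend on $p$, varying $p_k$ cannot tame the $\exp\bigl(c(r_{k+1}-r_k)^{-m}\bigr)$ factor either.

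The classical Bombieri--Giusti/Moser argument avoids this precisely by choosing the truncation relative to the size of $\Phi(R)$ rather than relative to the geometry: one takes $\log M = \sigma\log\Phi(R)$ for a fixed $\sigma\in(0,1)$, so the ``good'' level-set piece contributes $\Phi(R)^{\sigma p}$ and the ``bad'' piece has a coefficient $\bigl(c_0/(\sigma\log\Phi(R))\bigr)^{1-p/p_0}$ that is small whenever $\Phi(R)$ exceeds a fixed threshold, independently of $R-r$. After inserting \eqref{eq:ass1} and taking logarithms one obtains an inequality of the form $\log\Phi(r)\le\sigma\log\Phi(R)+C\bigl(1+\log\tfrac{1}{R-r}\bigr)$ (in the regime where $\Phi$ is above threshold, with the remaining regimes handled directly by \eqref{eq:ass1}). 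This contraction is geometric in $\log\Phi$ with an error that is only logarithmic in $(R-r)^{-1}$, so the iteration over geometrically decaying $r_{k+1}-r_k$ converges. That is the step missing from your write-up, and it is the crux of the lemma.
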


\begin{proof}[Proof of \autoref{thm:harnack}]
Let $u$ as in the assumption and define $\widetilde u = u + \norm{f}_{L^\infty(Q)}$. If $f=0$ a.e. on $Q$ we set $\widetilde u = u+ \epsilon$ and pass to the limit $\epsilon \searrow 0$ in the end. 

Furthermore, set $w = e^{-a} \widetilde u^{-1}$ and $\widehat w = w^{-1} = e^{a} \widetilde u$,
where $a=a(\widetilde u)$ is chosen according to \autoref{prop:loglemma}, i.e. there is $c_1>0$ such that for every $s>0$
\begin{align} \label{eq:horst}
\bet{ Q_{\oplus}(1) \cap \{\log w > s \} } \leq \frac{c_1 \bet{B_1}}{s},\quad \text{and} \quad \bet{ Q_{\ominus}(1) \cap \{\log \widehat w > s \} } \leq \frac{c_1 \bet{B_1}}{s}.
\end{align}
The strategy of the proof is to apply \autoref{prop:bombieri} twice: on the one hand to $w$ and a family of domains  $\cU=(U(r))_{\theta \leq r \leq 1}$ -- and on the other hand to $\widehat w$ and a family of domains ${\widehat \cU}=(\widehat U(r))_{\widehat \theta \leq r \leq 1}$. We consider the case $\alpha \geq 1$ first and define the families $\cU$ and $\widehat \cU$ by
\begin{align*}
U(1) &= Q_{\oplus}(1),\quad &\theta &= \frac1{2},& U(r) &= \left(1- {r^\alpha}, 1\right) \times B_{r},\\
\widehat U(1) &= Q_{\ominus}(1), &\widehat \theta &=\frac12 & \widehat U(r)&=\left(-1, -1+r^\alpha \right) \times B_{r}
\end{align*}
\begin{figure}[ht]
\centering
\includegraphics{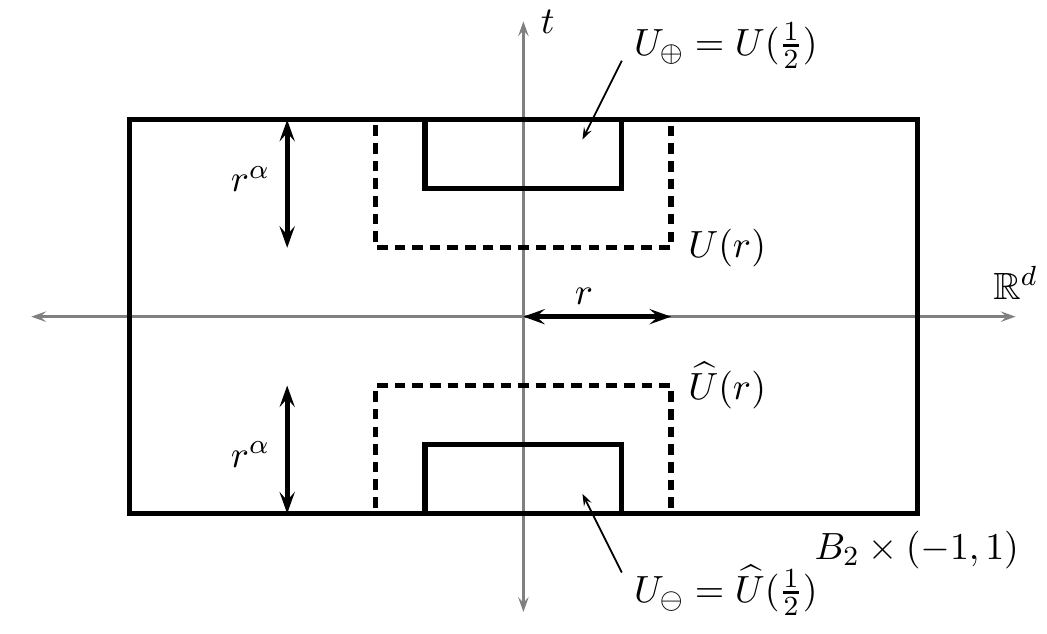}
\end{figure}
By virtue of \eqref{eq:horst} we see that condition \eqref{eq:ass2} is satisfied for both $w$ and $\widehat w$.

We apply \autoref{thm:inf-est} to $(w,\cU)$ with $p_0=\infty$ and arbitrary $\eta$. We also apply \autoref{thm:p-est} to $(\widehat w,\widehat \cU)$ with $\widehat p_0=1$ and $\widehat \eta=\frac{d}{d+2} \leq \kappa^{-1}$. In both cases the corresponding condition \eqref{eq:ass1} is satisfied. Note that the domains $U(r)$ and $\widehat U(r)$ are obtained from $Q_\ominus(r)$ and $Q_\oplus(r)$, respectively, by shiftings in time, i.e. transformations of the type $(t,x) \mapsto (t + \tau, x)$,
which do not affect neither \eqref{eq:inf-est} nor \eqref{eq:p-est}.

All in all, application of \autoref{prop:bombieri} yields
\begin{align*}
\sup_{U(\theta)} w = e^{-a}  \sup_{U(\theta)} \widetilde u^{-1} \leq C \qquad \text{ and } \quad \norm{\widehat w}_{L^1(\widehat U(\widehat \theta))} = e^{a} \norm{\widetilde u}_{L^1(\widehat U(\widehat \theta))} \leq \widehat C.
\end{align*}
Multiplying these two inequalities eliminates $a$ and yields
\[ \norm{\widetilde u}_{L^1(\widehat U(\widehat \theta))} \leq c_2 \inf_{U(\theta)} \widetilde u\]
for a constant $c_2= C\, \widehat C$ that depends only on $d,\alpha_0$ and $\Lambda$.
This proves \eqref{eq:harnack-speziell} in the case $\alpha \geq 1$ observing that $U_\oplus = U(\theta)$, $U_\ominus = \widehat U(\widehat \theta)$ and
\[ \norm{u}_{L^1(U_\ominus)} \leq \norm{\widetilde u}_{L^1(U_\ominus)} \leq c_2 \left( \inf_{U_{\oplus}} u + \norm{f}_{L^\infty(Q)} \right). \]
If $\alpha<1$, we define the domains $\cU$ and $\widehat \cU$ slightly differently, namely
\begin{align*}
U(1) &= Q_{\oplus}(1),\quad &\theta &= \left(\tfrac1{2}\right)^{\alpha},& U(r) &= \left(1- {r}, 1\right) \times B_{r^{1/\alpha}},\\
\widehat U(1) &= Q_{\ominus}(1), &\widehat \theta &=\left(\tfrac1{2}\right)^{\alpha}, & \widehat U(r)&=\left(-1, -1+r \right) \times B_{r^{1/\alpha}}.
\end{align*}
The same reasoning as above applies to these domains and hence \eqref{eq:harnack-speziell} is proved for all $\alpha \in (\alpha_0,2)$.
\end{proof}
The following corollary will be used to derive Hölder continuity in the next section.
\begin{cor} \label{cor:silvestre}
Let $\sigma \in (0,1)$ and $D_\ominus=(-2,-2+\left(\tfrac12\right)^\alpha) \times B_{1/2}$, $D_\oplus=(-\left(\tfrac12\right)^\alpha,0) \times B_{1/2}.$
There exist $\epsilon_0, \delta \in (0,1)$ such that for every function $w$ satisfying
\begin{align*}
\left\{ \begin{aligned}
         w &\geq 0\qquad &\text{a.e. in } (-2,0) \times \R^d,\\
 \partial_t w - Lw &\geq -\epsilon_0 &\text{in }  (-2,0) \times B_2,\\
 \bet{ D_\ominus \cap \{ w \geq 1\}} &\geq \sigma \bet{D_\ominus},
        \end{aligned} \right.
\end{align*}
the following estimate holds:
\begin{align}
 w \geq \delta \qquad \text{a.e. in } D_\oplus.
\end{align}
The constants $\epsilon_0$ and $\delta$ depend on $\sigma, \alpha_0,\Lambda, d$ but not on $\alpha \in (\alpha_0,2)$.
\end{cor}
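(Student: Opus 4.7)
The plan is to reduce \autoref{cor:silvestre} directly to the weak Harnack inequality, \autoref{thm:harnack}, after a time translation, and to check that the resulting constants are $\alpha$-independent.

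First, I would set $\widetilde w(t,x) = w(t-1,x)$. Then $\widetilde w$ is nonnegative on $(-1,1)\times\R^d$ and satisfies $\partial_t \widetilde w - L\widetilde w \geq -\epsilon_0$ on $Q=(-1,1)\times B_2(0)$, so $\widetilde w$ is a supersolution in the sense of \autoref{defi:solution} with right-hand side $f \equiv -\epsilon_0$. The translated versions of $D_\ominus$ and $D_\oplus$ coincide exactly with $U_\ominus=(-1,-1+(\tfrac12)^\alpha)\times B_{1/2}$ and $U_\oplus=(1-(\tfrac12)^\alpha,1)\times B_{1/2}$, respectively. Applying \autoref{thm:harnack} to $\widetilde w$ yields
\begin{equation*}
\norm{w}_{L^1(D_\ominus)} \;\leq\; C\bigl(\inf_{D_\oplus} w + \epsilon_0\bigr),
\end{equation*}
with a constant $C=C(d,\alpha_0,\Lambda)$ independent of $\alpha$.

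Second, I would use the density assumption on $D_\ominus$ to bound the left-hand side from below. Since $w\geq 1$ on a subset of $D_\ominus$ of measure at least $\sigma|D_\ominus|$, one obtains
\begin{equation*}
\norm{w}_{L^1(D_\ominus)} \;\geq\; \sigma\,\bet{D_\ominus} \;=\; \sigma\,(\tfrac12)^\alpha\,\bet{B_{1/2}} \;\geq\; \tfrac{\sigma}{4}\,\bet{B_{1/2}},
\end{equation*}
where the last inequality uses the elementary $\alpha$-independent bound $(\tfrac12)^\alpha\geq\tfrac14$ valid for $\alpha\in(\alpha_0,2)$. This removes the only apparent source of $\alpha$-dependence on the left-hand side.

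Combining the two estimates, $\tfrac{\sigma}{4}\bet{B_{1/2}} \leq C(\inf_{D_\oplus}w+\epsilon_0)$, so choosing $\epsilon_0 = \frac{\sigma\bet{B_{1/2}}}{8C}$ and $\delta=\frac{\sigma\bet{B_{1/2}}}{8C}$ yields $\inf_{D_\oplus}w\geq\delta$, and both constants depend only on $\sigma,d,\alpha_0,\Lambda$. There is essentially no obstacle beyond bookkeeping: the only point that deserves explicit mention is the $\alpha$-uniformity, which rests on two facts already supplied by the paper — the constant $C$ in \autoref{thm:harnack} does not depend on $\alpha$, and $(\tfrac12)^\alpha$ is bounded below independently of $\alpha\in(\alpha_0,2)$.
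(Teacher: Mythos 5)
Your proof is correct and follows essentially the same route as the paper's: translate in time by $-1$ to land in the standard cylinder $(-1,1)\times B_2$, apply \autoref{thm:harnack} with $f\equiv -\epsilon_0$, lower-bound $\norm{w}_{L^1(D_\ominus)}$ via the density hypothesis, and then choose $\epsilon_0$ and $\delta$. The paper's version is merely more compressed (it writes the estimate as $\sigma\leq\fint_{D_\ominus}w\leq c(\inf_{D_\oplus}w+\epsilon_0)$ and suppresses the time shift and the $(\tfrac12)^\alpha\geq\tfrac14$ observation, both of which you usefully make explicit).
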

\begin{proof}
Application of \autoref{thm:harnack} to $w$ yields
\[ \sigma \leq \fint_{D_\ominus} w(t,x) \d x \d t  \leq c \left( \inf_{D_\oplus} w + \epsilon_0 \right)\]
for a constant $c=c(d,\alpha_0,\Lambda)$. Choosing $\epsilon_0 < \frac{\sigma}{c}$ and $\delta=\frac{\sigma-c \epsilon_0}{c}$ we obtain
\[ \inf_{D_\oplus} w \geq \delta, \]
which is the desired inequality.
\end{proof}

\section{Proof of Hölder regularity estimates}\label{sec:hoelder}
In this section we deduce \autoref{thm:hoelder_main} from \autoref{thm:harnack}. This step is not trivial and differs from the proof in
the case of a local differential operator because the (super-)solutions in \autoref{thm:harnack} are assumed to be nonnegative in the whole spatial domain. Note that the auxiliary functions of the type $M(t,x) = \sup_Q u - u(t,x)$ and $m(t,x)=u - \inf_Q u$ used in \cite[Section 2]{moser-alt} are nonnegative in $Q$ but not in all of $\R^d$. The key idea to overcome this problem is to derive \autoref{lem:w-lemma} from the Harnack inequality. \autoref{lem:w-lemma} then implies \autoref{thm:hoelder_main}. This step is carried out in \cite{Sil06} for elliptic equations.
 
Define for $(t,x) \in \R^{d+1}$ a distance function 
\begin{align*}
 \widehat \rho( (t,x)) = 
 \begin{cases}
  \max\left( \tfrac13 \bet{x}, \tfrac12 {(-t)}^{1/\alpha} \right)\qquad &\text{if } t \in (-2,0],\\
  \infty &\text{if } t \notin (-2,0].
 \end{cases}
\end{align*}
Note that $\widehat \rho((x,t)) \neq \widehat \rho(-(x,t))$. We define 
\[ \widehat D_r((x_0,t_0)) = \left\{ (t,x) \in \R^{d+1}\ \left|\ \widehat \rho((t,x)-(t_0,x_0)) < r \right. \right\},\qquad I_1=(-2,0) \]
and note 
\[\widehat D_r((x_0,t_0)) = (t_0-2r^\alpha, t_0) \times B_{3r}(x_0) \quad \text{and} \quad \bigcup_{r >0} \widehat D_r((0,0))=I_1 \times \R^d.\] 
To simplify notation we write $\widehat D(r) = \widehat D_r((0,0))$. Additionally, we define $D(r)=(-2r^\alpha,0) \times B_{2r}(0)$ and recall the definitions of $D_\oplus$ and $D_\ominus$ in \autoref{cor:silvestre}.
\begin{figure}[ht] \centering
 \includegraphics{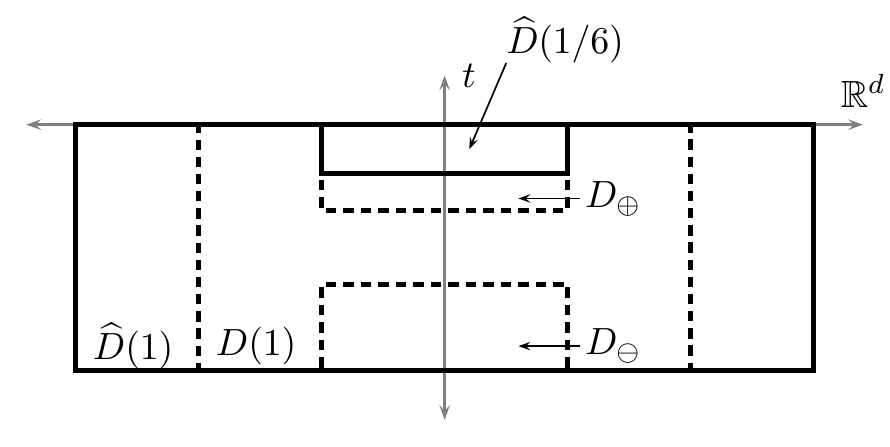}
\end{figure}
\begin{lem} \label{lem:w-lemma} Assume that $L$ is defined by \eqref{eq:def_L} with a kernel $k$ belonging to some $\cK'(\alpha_0,\Lambda)$.
Then there exist ${\beta_0} \in (0,1)$ and $\delta \in (0,1)$ depending on $d,\alpha_0$ and $\Lambda$ such that for every function $w$ with the properties
\begin{subequations}
  \begin{align}
 w &\geq 0 \qquad \text{a.e. in } \widehat D(1), \label{eq:w-a} \\
 \partial_t w - L w &\geq 0 \qquad \text{in } \widehat D(1), \label{eq:w-b} \\
 \bet{ D_\ominus \cap \{ w \geq 1\} } &\geq \frac12 \bet{D_\ominus}, \label{eq:w-c} \\
 w &\geq 2 \left[ 1 - \left(6\, \widehat \rho(t,y)\right)^{\beta_0} \right] \qquad \text{a.e. in } I_1 \times( \R^d \setminus B_3), \label{eq:w-d}
\end{align}
\end{subequations}
the following inequality holds: \[ w \geq \delta \qquad \text{a.e. in } D_\oplus. \] 
\end{lem}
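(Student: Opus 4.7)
\emph{Plan.} The obstruction to applying \autoref{cor:silvestre} directly is the global nonnegativity hypothesis: our $w$ is only nonnegative on $\widehat D(1) = (-2,0)\times B_3$, while outside we have just the growth bound \eqref{eq:w-d}, which may be $O(\bet{y}^{\beta_0})$ negative for large $\bet{y}$. The natural remedy is to replace $w$ by its positive part $\tilde w := w \vee 0$, which is globally nonnegative and, by \eqref{eq:w-a}, coincides with $w$ on $\widehat D(1)$ -- hence on $D_\ominus$, $D_\oplus$, and $(-2,0)\times B_2$. In particular the density hypothesis (c) of \autoref{cor:silvestre} transfers from \eqref{eq:w-c} to $\tilde w$ with the same constant $\sigma = \tfrac12$, and from any conclusion $\tilde w \geq \delta$ on $D_\oplus$ we immediately derive $w \geq \delta$ on $D_\oplus$.

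What remains is to verify that $\tilde w$ satisfies $\partial_t \tilde w - L\tilde w \geq -\epsilon_0$ on $(-2,0)\times B_2$ in the weak sense, where $\epsilon_0 = \epsilon_0(d,\alpha_0,\Lambda)$ is the threshold from \autoref{cor:silvestre} corresponding to $\sigma = \tfrac12$. Writing $\tilde w = w + w^-$, testing against a nonnegative $\phi$ supported in $B_2$ and using that $w^- \equiv 0$ on $\widehat D(1) \supset (-2,0)\times B_3$, the symmetry of $k_t$ yields
\[
\cE_t(\tilde w,\phi) - \cE_t(w,\phi) = \cE_t(w^-,\phi) = -2 \int_{B_2} \phi(x) \int_{\R^d \setminus B_3} w^-(t,y)\, k_t(x,y)\, dy\, dx\,,
\]
since $w^-$ vanishes on $B_3$ and $\phi$ vanishes off $B_2 \subset B_3$ (only the ``mixed'' pairs contribute, and they do so symmetrically). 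Combining this with \eqref{eq:w-b}, the desired inequality reduces to the uniform tail bound
\[
T(t,x) := \int_{\R^d \setminus B_3} w^-(t,y)\, k_t(x,y)\, dy \leq \epsilon_0/2 \qquad \text{for every } (t,x) \in (-2,0)\times B_2\,.
\]

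By \eqref{eq:w-d}, $w^-(t,y) \leq 2\bigl[(6\widehat\rho(t,y))^{\beta_0}-1\bigr]_+$ on $I_1 \times (\R^d \setminus B_3)$, together with the elementary bound $6\widehat\rho(t,y) \leq C_0(\alpha_0) \bet{y}$ on this set (since $(-t)^{1/\alpha} \leq 2^{1/\alpha_0}$ and $\bet{y}>3$). Splitting at a large radius $R>3$:
\[
T(t,x) \leq 2\bigl[(C_0 R)^{\beta_0}-1\bigr] \int_{\bet{y-x}>1} k_0(x,y)\, dy + 2 C_0^{\beta_0} \int_{\bet{y}>R} \bet{y}^{\beta_0}\, k_0(x,y)\, dy\,.
\]
By \eqref{eq:K-1} the first integral is bounded by $\Lambda$ uniformly in $\alpha$, and its prefactor tends to $0$ as $\beta_0 \to 0$ (for fixed $R$). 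Assuming $\beta_0 \leq 1/(2\Lambda)$ and writing $\bet{y}^{\beta_0} \leq R^{\beta_0-1/\Lambda} \bet{y}^{1/\Lambda}$ for $\bet{y}>R$, the second integral is bounded by $\Lambda R^{\beta_0-1/\Lambda}$ via \eqref{eq:K-3}, which tends to $0$ as $R \to \infty$. Hence first choose $R = R(d,\alpha_0,\Lambda)$ so large that the second term is $\leq \epsilon_0/4$, then choose $\beta_0 \in (0, 1/(2\Lambda)]$ so small that the first term is $\leq \epsilon_0/4$. With these choices \autoref{cor:silvestre} applies to $\tilde w$ and delivers the desired $\delta = \delta(d,\alpha_0,\Lambda)$.

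The principal obstacle is the tail estimate above: both the smallness of $T$ and the threshold $\beta_0$ must be controlled uniformly in $\alpha \in (\alpha_0,2)$, which is exactly why the power $1/\Lambda$ in \eqref{eq:K-3} and the robust constant $\epsilon_0$ from \autoref{cor:silvestre} are essential. No use of the specific sub-/supersolution structure beyond \eqref{eq:w-b} is needed, which is why this ``truncation'' argument is the natural counterpart of the Silvestre-type step used in the elliptic setting.
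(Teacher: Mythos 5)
Your proposal is correct and follows essentially the same route as the paper: decompose $w = w^+ - w^-$, observe that $w^+$ satisfies the supersolution inequality on $(-2,0)\times B_2$ with a forcing term given by the nonlocal tail of $w^-$ (which is supported in $\R^d\setminus B_3$), bound that tail by a uniformly small $\epsilon_0$ using \eqref{eq:w-d} together with \eqref{eq:K-1} and \eqref{eq:K-3}, and invoke \autoref{cor:silvestre} with $\sigma=\tfrac12$. The only difference is cosmetic: the paper writes the error as $f = Lw^-$ and simply asserts that $R$ and $\beta_0$ can be chosen to make $\|f\|_{L^\infty}\leq\epsilon_0$, whereas you carry out the splitting at radius $R$ explicitly, including the useful reduction $\bet{y}^{\beta_0}\leq R^{\beta_0-1/\Lambda}\bet{y}^{1/\Lambda}$ for $\beta_0\leq 1/(2\Lambda)$ — a clean way to make the ``choose $R$ large, then $\beta_0$ small'' order of quantifiers transparent.
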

\begin{proof}
The conditions \eqref{eq:w-a} and \eqref{eq:w-b} imply
$ \partial_t w^+ - L w^+ \geq -f$ in $D(1)$, where
\[ f(t,x) = (Lw^-)(t,x) \qquad \text{for } (t,x) \in D(1).\]
Note that since $\bet{x-y} \geq 1$ for $x \in B_2$ and $y \in \R^{d}\setminus B_3$
\begin{align*}
 \norm{f }_{L^\infty(D(1))} = \sup_{(t,x) \in D(1)} \int_{\R^{d}\setminus B_3(0)} w^-(t,y) k_t(x,y) \d y < \infty\ .
\end{align*}
Next, from condition \eqref{eq:w-d} we deduce
\[  w^-(t,y) \leq 2 \left[ 6\ \widehat \rho (t,y)\right]^{\beta_0} - 2 \leq 2 \left( 4^{\beta_0} \bet{y}^{\beta_0} -1 \right)  \qquad \text{a.e. in } I_1 \times (\R^d \setminus B_3).\]
Our aim is to show $\norm{f}_{L^\infty(D(1))} \leq \epsilon_0$ with $\epsilon_0$ as in \autoref{cor:silvestre} for $\sigma=\frac12$. Note that for every $R>3$
\begin{align*}
 \int_{\R^{d}\setminus B_3(0)} \left( 4^{\beta_0} \bet{y}^{\beta_0} -1 \right) k_t(x,y) \d y &= \int_{\R^{d} \setminus B_R(0)} \left( 4^{\beta_0} \bet{y}^{\beta_0} -1 \right) k_t(x,y) \d y \\ &\qquad + \int_{B_R \setminus B_3(0)} \left( 4^{\beta_0} \bet{y}^{\beta_0} -1 \right) k_t(x,y) \d y.
\end{align*}
Because of \eqref{eq:K-3} it is possible to choose $R$ sufficiently large and ${\beta_0} \in (0,1)$ sufficiently small in dependence of $\epsilon_0$ and $\Lambda$ such that $\norm{f}_{L^\infty(D(1))} \leq \epsilon_0$.

Condition \eqref{eq:w-c} ensures that \autoref{cor:silvestre} can be applied.
\end{proof}

\begin{sa}[Oscillation decay] \label{thm:osc-decay} Assume that $L$ is defined by \eqref{eq:def_L} with a kernel $k$ belonging to some $\cK'(\alpha_0,\Lambda)$. Then there exists $\beta \in (0,1)$ depending on $d,\alpha_0$ and $\Lambda$ such that every solution $u$ to $\partial_t u - L u =0$ in $\widehat D(1)$ satisfies for all $\nu \in \Z$
\begin{align} 
 \osc_{\widehat D(6^{-\nu})} u \leq 2\|u\|_{L^\infty(I_1 \times \R^d)} 6^{-\nu\beta}, \label{eq:osc}
\end{align}
where $\osc_Q u = \sup_Q u - \inf_Q u$.
\end{sa}
\begin{proof} Set $K = M_0 - m_0$ where $M_0 = \sup_{I_1 \times \R^{d}} u$, $m_0 = \inf_{I_1 \times \R^{d}} u$. Let $\delta, \beta_0 \in (0,1)$ be the constants from \autoref{lem:w-lemma}. Define 
\begin{equation}\label{eq:beta}
 \beta = \min\Bigl( \beta_0, \frac{\log (\tfrac{2}{2-\delta})}{\log 6} \Bigr)\quad \Longrightarrow \quad 1 - \frac{\delta}{2} < 6^{-\beta}.
\end{equation}
We will construct inductively an increasing sequence $(m_\nu)_{\nu \in \Z}$ and a decreasing sequence\linebreak $(M_\nu)_{\nu \in \Z}$ such that for every $\nu \in \Z$ 
\begin{align}
 \begin{aligned}
  m_\nu \leq u &\leq M_\nu \qquad \text{a.e. in } \widehat D(6^{-\nu}),\\
  M_\nu - m_\nu &= K 6^{-\nu\beta}.
 \end{aligned} 
\label{eq:maximum-minimum}
\end{align}
Obviously, \eqref{eq:maximum-minimum} implies \eqref{eq:osc}. For $n \in \N$ set $M_{-n} = M_0$, $m_{-n}=m_0$.
Assume we have constructed $M_n$ and $m_n$ for $n \leq k-1$ and define
\[ v(t,x)= \left[ u \left(\frac{t}{6^{\alpha(k-1)}} ,\frac{x}{6^{k-1}}  \right)
- \frac{M_{k-1}+m_{k-1}}{2} \right] \frac{2\cdot 6^{\beta(k-1)}}{K}\,.\]
Clearly, $v$ satisfies
\begin{align} \label{eq:v-prop}
 \partial_t v - L v =0 \text{ in } \widehat D(1)\quad \text{ and }\quad \bet{v} \leq 1 \text{ in }\widehat D(1) \text{ (by induction hypothesis)}.
\end{align}
On $I_1 \times (\R^d \setminus B_3)$ we can estimate $v$ in the following way: For $(t,y) \in I_1 \times (\R^d \setminus B_3)$ fix $j \in \N$ such that
\[ 6^{j-1} \leq \widehat \rho(t,y) < 6^{j}, \quad \text{or equivalently } (t,y) \in \widehat D(6^{j}) \setminus \widehat D(6^{j-1}).\] 
Then
\allowdisplaybreaks
\begin{align} 
\frac{K}{2 \cdot 6^{(k-1)\beta}} v(t,y) &= \left( u\left( \frac{t}{6^{\alpha(k-1)}},\frac{y}{6^{k-1}}\right) - \frac{M_{k-1}+m_{k-1}}{2} \right) \nonumber \\
&\leq \left( M_{k-j-1} - m_{k-j-1} + m_{k-j-1} - \frac{M_{k-1}+m_{k-1}}{2} \right) \nonumber\\
&\leq \left( M_{k-j-1} - m_{k-j-1} - \frac{M_{k-1}-m_{k-1}}{2} \right) \nonumber\\
&\leq \left( K 6^{-(k-j-1)\beta} - \frac{K}{2} 6^{-(k-1)\beta} \right),\nonumber \\
\Rightarrow v(t,y) & \leq 2 \cdot 6^{j\beta} -1\qquad \text{for a.e. } (t,y) \in \widehat D(6^{j}) \setminus \widehat D(6^{j-1}) \nonumber \\
\Rightarrow v(t,y) & \leq 2 \left[ 6\ \widehat \rho(t,y)\right]^\beta - 1 \qquad \text{for a.e. } (t,y) \in I_1 \times (\R^d \setminus B_3) . \label{eq:v-kompl1}
\intertext{Analogously, we can estimate $v$ from below by}
 v(t,y) &\geq 1 - 2 \left[ 6\ \widehat \rho(t,y)\right]^\beta \qquad \text{for a.e. } (t,y) \in I_1 \times (\R^d \setminus B_3). \label{eq:v-kompl2}
\end{align}
Now there are two cases. In the first case $v$ is non-positive in at least half of the set $D_\ominus$, i.e.
\begin{equation} \label{eq:half}
 \bet{ D_\ominus \cap \{ v \leq 0\} } \geq \frac12 \bet{D_\ominus}.
\end{equation}
Set $w=1-v$. $w$ satisfies conditions \eqref{eq:w-a}-\eqref{eq:w-d} of \autoref{lem:w-lemma} and hence
\[ w \geq \delta \quad \text{a.e. in } D_\oplus, \quad \text{ or equivalently} \quad v \leq 1 - \delta \quad \text{a.e. in } D_\oplus.\]

Noting that $\widehat D(1/6) \subset D_\oplus$ this estimate has the following consequence for $u$: For a.e. $(t,x) \in \widehat D(6^{-k})$ we have
\begin{align*}
u(t,x) &=  \frac{K}{2 \cdot 6^{(k-1)\beta}}\ v\left(6^{\alpha (k-1)} t ,{6^{k-1} x}  \right) + \frac{M_{k-1}+m_{k-1}}{2} \\
&\leq \frac{K (1-\delta)}{2 \cdot 6^{(k-1)\beta}} + m_{k-1} + \frac{M_{k-1}-m_{k-1}}{2} \\
&\leq \frac{K (1-\delta)}{2 \cdot 6^{(k-1)\beta}} + m_{k-1} + \frac{K}{2 \cdot 6^{(k-1)\beta}} = m_{k-1} + \left(1-\frac{\delta}{2} \right) K 6^{-(k-1)\beta} \\
&\leq m_{k-1} + K 6^{-k\beta}, 
\end{align*}
where we apply \eqref{eq:beta} in the last inequality.
By choosing $m_k = m_{k-1}$ and $M_k = m_{k-1} + K 6^{-k\beta}$ we obtain sequences $(m_n)$ and $(M_n)$ satisfying \eqref{eq:maximum-minimum}.
In the second case $v$ is positive in at least half of the set $D_\ominus$ and hence $w=1+v$ satisfies all conditions of \autoref{lem:w-lemma}. Therefore, we obtain
\[ w \geq \delta \quad \text{a.e. in } D_\oplus, \quad \text{ or equivalently} \quad v \geq -1 + \delta  \quad \text{a.e. in } D_\oplus.\]
Adopting the computations above we see that $M_k = M_{k-1}$ and $m_k = M_{k-1} - K 6^{-k\beta}$ lead to the desired result.

This proves \eqref{eq:maximum-minimum}.
\end{proof}

Having established \autoref{thm:osc-decay} we are now able to prove \autoref{thm:hoelder_main} providing a priori estimates of Hölder norms of solutions.

\begin{proof}[Proof of \autoref{thm:hoelder_main}]
Let $u$ as in \autoref{thm:hoelder_main}, $Q' \Subset Q$ and define
\[ \eta(Q',Q)=\eta = \sup \left\{ r\in (0,\tfrac12] \ \left|\ \forall (t,x) \in Q' \colon \widehat D_r(t,x) \subset Q \right. \right\}.\]
Fix $(t,x),(s,y) \in Q'$. Without loss of generality $t\leq s$. At first, assume that 
\begin{equation} \label{eq:rho}
 \widehat \rho((t,x)-(s,y))<\eta 
\end{equation}
and choose $n \in \N_0$ such that
\[ \frac{\eta}{6^{n+1}} \leq \widehat \rho((t,x)-(s,y)) < \frac{\eta}{6^n}. \]
Now set $\overline u(t,x)=u(\eta^\alpha t + s, \eta x + y)$. By assumption $\overline u$ is a solution of $\partial_t \overline u - L \overline u = 0$ in $\widehat D(1)$. Accordingly, applying \autoref{thm:osc-decay} to $\overline u $ we obtain
\begin{align*}
\bet{u(t,x)-u(s,y)}& = \bet{\overline u(\eta^{-\alpha}(t-s), \eta^{-1}(x-y))- \overline u(0,0)} \\
&\leq 2 \norm{\overline u}_{L^\infty(I_1 \times \R^d)} 6^{-n\beta} \leq 2 \norm{ u}_{L^\infty(I \times \R^d)} \left( 6^{-n-1}\right)^\beta  6^{\beta} \\
&\leq 12 \norm{ u}_{L^\infty(I \times \R^d)} \left( \frac{ \widehat \rho( (t,x)-(s,y) )}{\eta} \right)^\beta \\
&\leq 12 \norm{ u}_{L^\infty(I \times \R^d)} \left( \frac{\bet{x-y} + (s-t)^{1/\alpha}}{\eta}   \right)^\beta .
\end{align*}
Hence, for all $(t,x),(y,s) \in Q'$ subject to \eqref{eq:rho}
\begin{align*}
 \frac{ \bet{u(t,x)-u(s,y)} }{\left( \bet{x-y} + \bet{t-s}^{1/\alpha} \right)^{\beta}} \leq \frac{12 \norm{u}_{L^\infty(I \times \R^d)}}{\eta^\beta}\ .
\end{align*}
If $\widehat \rho((t,x)-(s,y))\geq \eta$ then the Hölder estimate follows directly:
\begin{align*}
 \bet{u(t,x)-u(s,y)} &\leq 2 \norm{u}_{L^\infty(I \times \R^d)} \leq  \frac{2 \norm{u}_{L^\infty(I \times \R^d)} \left[\max\left(\bet{x-y},\bet{t-s}^{1/\alpha}\right)\right]^\beta}{\eta^\beta}\\
 &\leq \frac{2 \norm{u}_{L^\infty(I \times \R^d)}}{\eta^\beta} \left(\bet{x-y}+\bet{t-s}^{1/\alpha}\right)^\beta.
\end{align*}
Hence,
\begin{align*}
 \sup_{(t,x),(s,y) \in Q'} \frac{ \bet{u(t,x)-u(s,y)} }{\left( \bet{x-y} + \bet{t-s}^{1/\alpha} \right)^{\beta}} \leq \frac{12 \norm{u}_{L^\infty(I \times \R^d)}}{\eta^\beta} ,
\end{align*}
which had to be shown.
\end{proof}

\begin{appendix}
\section{Steklov averages} \label{sec:A-1}

The aim of this appendix is to justify the use of \eqref{eq:steklov-solution} instead of \eqref{eq:def-solution} in our main technical results, \autoref{prop:step-neg}, \autoref{prop:step-pos} and \autoref{prop:loglemma}. Thus, we can work with supersolutions $u$ as if they were a.e. differentiable with respect to $t$. This approach is standard when proving regularity results for solutions to parabolic problems, cf. \cite[Sec. 9]{ArSe67}. Nevertheless, we provide details and show that the nonlocality (in space) of our parabolic operator does not form a serious obstacle.

In the above mentioned proofs we multiply \eqref{eq:steklov-solution} with some piecewise differentiable function $\chi \colon \R \to [0,\infty)$ and integrate over some time interval $(t_1,t_2) \subset I$. This implies, together with the chain rule and partial integration,
\begin{multline}\label{eq:A-1}
 \left[ \chi(t) \int_{\Omega'} \psi(x) w(t,x) \d x \right]_{t=t_1}^{t_2}   + \int_{t_1}^{t_2} \chi(t) \cE_t(u,\phi) \d t\\ \geq \int_{t_1}^{t_2} \chi (t) \int_{\Omega'} f(t,x) \phi(x) \d x \d t + \int_{t_1}^{t_2} \chi'(t) \int_{\Omega'} \psi(x) w(t,x) \d x \d t,
\end{multline}
where 
\begin{equation}\label{eq:def-w}
 w(t,x)=\begin{cases}
         \frac{1}{1-q} u^{1-q}(t,x) \qquad &\text{if } q \neq 1,\\
         \log u(t,x) &\text{if } q =1.
        \end{cases}
\end{equation}
Inequality \eqref{eq:A-1} is the main source for our estimates. Let us now show how to derive \eqref{eq:A-1} from \eqref{eq:def-solution}. To this end, we introduce the concept of
\emph{Steklov averages} (cf. \cite{diBenedetto-neu},
\cite{lady}): Let $I=(T_1,T_2)$, $Q=I \times \Omega$. For $v \in L^1(Q)$ and $0<h<T_2-T_1$ define
\[ v_h(\cdot,t) = \begin{cases}
                    \displaystyle \frac1h \int_{t}^{t+h} v(\cdot,s)\d s \quad
&\text{for } T_1 < t<T_2 -h,\\
		    0, &\text{for } t \geq T_2-h    .               \end{cases}\]

Fix $t \in I$, $\Omega' \Subset \Omega$ and $h>0$ such that $t+h \in I$. In \eqref{eq:def-solution} we choose $\phi(s,x)=\eta(x)$ with $\eta \in H_0^{\alpha/2}(\Omega')$, $t_1=t$ and $t_2=t+h$. Dividing by $h$ we obtain
 \begin{align}\label{eq:cons_sol}
 \int_{\Omega'} \partial_t u_h(t,x) \eta(x) \d x + \frac1h \int_{t}^{t+h} \cE_s(u(s,\cdot),\eta(\cdot)) \d s \geq \int_{\Omega'} f_h(t,x) \eta(x) \d x \ , 
\end{align}
valid for all $t \in I$ and $\eta \in H_0^{\alpha/2}(\Omega')$.

Next we choose in \eqref{eq:cons_sol} (for fixed $t \in I$) test functions of the form $\eta=\chi(t)\psi u_h^{-q}(t,\cdot)$, $q>0$, where $\psi,\chi$ are suitable cut-off functions. Then we integrate \eqref{eq:cons_sol} over some time interval $(t_1,t_2)$. Hence, with $w$ as in \eqref{eq:def-w},
\begin{multline*}
 \int_{t_1}^{t_2} \chi(t) \int_{\Omega'} \psi(x) \partial_t w_h(t,x) \d x \d t + \int_{t_1}^{t_2} \chi(t) \frac1h \int_{t}^{t+h} \cE_s(u(s,\cdot),\psi(\cdot)u_h^{-q}(t,\cdot)) \d s \d t\\
  \geq \int_{t_1}^{t_2} \chi(t) \int_{\Omega'} f_h(t,x) \psi(x) u_h^{-q}(t,x) \d x \d t \, .
\end{multline*}
After partial integration in the first term we pass to the limit $h \to 0$. \autoref{lem:steklov} and \autoref{lem:appenix-convergence} from below then imply
\begin{multline}
  \left[\chi(t) \int_{\Omega'}\psi(x) w(t,x) \d x \right]_{t=t_1}^{t_2} + \int_{t_1}^{t_2} \chi(t) \cE_t(u(t,\cdot),\psi(\cdot)u^{-q}(t,\cdot)) \d t \\ \geq 
  \int_{t_1}^{t_2} \chi(t) \int_{\Omega'} f(t,x) u(t,x) \psi(x) \d x \d t + \int_{t_1}^{t_2} \chi'(t) \psi(x) w(t,x) \d x \d t\, ,
\end{multline}
which we wanted to show. 

It remains to prove two auxiliary results.

\begin{lem} \label{lem:steklov}
Let $X$ be a Banach space and let $v \in L^{p}(I;X)$ for some $p \in [1,\infty]$ and $I'=(t_1,t_2) \subset I$ with $t_2<T_2$. Then
\begin{enumerate}[(i)]\itemsep0.5em
 \item $v_h \in C(\overline {I'};X)$ for every $h \in (0,T_2-t_2)$.
 \item $\norm{v_h}_{L^p(I';X)} \leq \norm{v}_{L^p(I';X)}$ for every $h \in
(0,T_2-t_2)$. 
 \item If $p<\infty \colon \norm{v_h-v}_{L^p(I';X)} \to 0 \quad \text{for } h \searrow 0$.
 \item If $v\in C(I,L^p(\Omega))$ for some $p<\infty$, then $\norm{v_h(t,\cdot)-v(t,\cdot)}_{L^p(\Omega)} \xrightarrow{h\to0+} 0$ for every $t \in I'$.
\end{enumerate}\end{lem}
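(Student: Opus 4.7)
The plan is to treat the four assertions in the stated order, relying on the absolute continuity of the Bochner indefinite integral, Jensen's inequality, a density argument, and the strong continuity hypothesis in (iv).

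For (i), I would write $v_h(t) = h^{-1}(F(t+h)-F(t))$ with $F(t) = \int_{T_1}^t v(s)\d s$. Since $v \in L^p(I;X) \subset L^1_{loc}(I;X)$, the vector-valued indefinite integral $F$ is continuous (in fact absolutely continuous) on compact subintervals of $I$. As $t+h \in I$ for $t \in \overline{I'}$ and $h \in (0,T_2-t_2)$, the map $t \mapsto v_h(t)$ is a continuous $X$-valued function on $\overline{I'}$.

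For (ii), for $p\in[1,\infty)$ I would apply Jensen's inequality pointwise in $t$ to obtain $\norm{v_h(t)}_X^p \leq h^{-1}\int_t^{t+h} \norm{v(s)}_X^p\d s$, integrate in $t$ over $I'$ and swap the order of integration by Fubini to bound the result by $\norm{v}_{L^p(I';X)}^p$ (after enlarging the $s$-range to $I$ if one wishes to be precise about boundary effects). The case $p=\infty$ is immediate from the triangle inequality for Bochner integrals.

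For (iii), I would first establish the convergence for $v \in C_c(I;X)$, where uniform continuity yields $\norm{v_h-v}_{C(\overline{I'};X)} \to 0$, and hence $\norm{v_h-v}_{L^p(I';X)}\to 0$. For general $v \in L^p(I;X)$ with $p<\infty$, approximate by $w \in C_c(I;X)$ with $\norm{v-w}_{L^p(I;X)}<\varepsilon$ (using density), observe that $(v-w)_h = v_h - w_h$, and split
\begin{equation*}
\norm{v_h-v}_{L^p(I';X)} \leq \norm{v_h-w_h}_{L^p(I';X)} + \norm{w_h-w}_{L^p(I';X)} + \norm{w-v}_{L^p(I';X)};
\end{equation*}
the first and third terms are bounded by $\varepsilon$ using (ii), while the middle term vanishes as $h\searrow 0$ by the continuous case.

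For (iv), for fixed $t \in I'$, continuity of $v\colon I \to L^p(\Omega)$ at $t$ gives, for every $\varepsilon>0$, some $\delta>0$ with $\norm{v(s)-v(t)}_{L^p(\Omega)}<\varepsilon$ for $\bet{s-t}<\delta$. Then, applying Minkowski's integral inequality to the Bochner integral,
\begin{equation*}
\norm{v_h(t)-v(t)}_{L^p(\Omega)} = \Bigl\| h^{-1}\int_t^{t+h}\bigl(v(s)-v(t)\bigr)\d s\Bigr\|_{L^p(\Omega)} \leq h^{-1}\int_t^{t+h} \norm{v(s)-v(t)}_{L^p(\Omega)} \d s,
\end{equation*}
which for $h<\delta$ is bounded by $\varepsilon$. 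The only mildly delicate point is the density of $C_c(I;X)$ in $L^p(I;X)$ used in (iii); this is standard but depends on the separability conventions one adopts for Bochner spaces and is the step most worth spelling out carefully.
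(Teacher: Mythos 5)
Your proof is correct, and in fact it supplies what the paper omits: the paper does not prove \autoref{lem:steklov}, remarking only that the proof ``is quite elementary'' and citing Ladyzhenskaya et al.\ for some of the assertions. Your arguments (representing $v_h$ via the absolutely continuous indefinite Bochner integral for (i), Jensen plus Fubini for (ii), density of continuous compactly supported functions combined with the contraction property (ii) for (iii), and the triangle inequality for Bochner integrals together with strong continuity for (iv)) are exactly the standard route, so there is no meaningful divergence from what the paper intends.

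One small point worth stating explicitly: in (ii), once you unfold the double integral, the $s$-variable ranges over $(t_1, t_2+h)$, which is a proper superset of $I'$; the clean bound is therefore $\norm{v_h}_{L^p(I';X)} \leq \norm{v}_{L^p(I;X)}$ (or $\norm{v}_{L^p((t_1,t_2+h);X)}$), not literally $\norm{v}_{L^p(I';X)}$ as the lemma writes. You noticed this (``after enlarging the $s$-range to $I$''), and that is the right reading --- it is a minor imprecision in the lemma statement rather than in your argument, and is harmless for how the lemma is used in \autoref{lem:appenix-convergence}. Your flagged concern about density of $C_c(I;X)$ in $L^p(I;X)$ for $p<\infty$ is also legitimate but unproblematic here: it holds for any Banach space $X$ (one first approximates by simple functions and then mollifies the scalar coefficients), with no separability hypothesis needed.
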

The proof of this lemma is quite elementary; some of the assertions above are proved in \cite[II.§4]{lady}.
\begin{lem}\label{lem:appenix-convergence}
Let $v$ be a positive supersolution to $\partial_t v - Lv=f$ in $Q=\Omega \times I$. Let $\phi$ be an admissible test function as in \autoref{defi:solution} that is bounded and satisfies $\supp[\phi(t,\cdot)] \subset B_R \Subset \Omega$ for some $R>0$ and every $t \in I$. Then for every $I' \Subset I$
\begin{equation}\label{eq:convergence}
 \int_{I'} \frac1h \int_{t}^{t+h} \cE_s(v(s,\cdot),\phi_h(t,\cdot)) \d s \d t 
\xrightarrow{h \to 0+} \int_{I'} \cE_t(v,\phi) \d t .
\end{equation}
\end{lem}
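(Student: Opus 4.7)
The strategy is to swap the outer $t$-integration with the inner $s$-integration, thereby transferring the $t$-averaging from $\phi$ to a double-averaged version of $\phi$ sitting as the second argument of $\cE_s$, and then to invoke continuity of the bilinear form together with standard Steklov approximation.

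\emph{Step 1 (Fubini and reduction to a single outer integral).} Writing $I' = (t_1,t_2)$, the linearity of $\cE_s(v(s,\cdot),\,\cdot\,)$ in its second argument together with Fubini gives
\[
\int_{I'} \frac{1}{h}\int_t^{t+h} \cE_s(v(s,\cdot),\phi_h(t,\cdot)) \d s\,\d t \;=\; \int_{t_1}^{t_2+h} \cE_s\bigl(v(s,\cdot),\Phi_h(s,\cdot)\bigr) \d s,
\]
where $\Phi_h(s,\cdot) := \frac{1}{h}\int_{(s-h)\vee t_1}^{s\wedge t_2} \phi_h(t,\cdot) \d t$. The joint absolute integrability needed to apply Fubini follows from splitting $\cE_s$ as in Step 3 below and combining the $L^\infty$-bound on $v$ and $\phi_h$ with \eqref{eq:K-1}, \eqref{eq:K-2}, and $v \in L^2_{\mathrm{loc}}(I;H^{\alpha/2}_{\mathrm{loc}}(\Omega))$.

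\emph{Step 2 (Convergence $\Phi_h \to \phi\,\Ind{I'}$).} A direct calculation shows that for $s \in (t_1+h, t_2)$ one has the triangular-kernel formula
\[
\Phi_h(s,\cdot) \;=\; \int_{-1}^{1} (1-|\rho|)\,\phi(s+h\rho,\cdot)\,\d\rho ,
\]
while on the remaining sets $(t_1, t_1+h) \cup (t_2, t_2+h)$ of measure $O(h)$ the average $\Phi_h$ is dominated by $\norm{\phi}_\infty$. By iterating \autoref{lem:steklov}(iii), $\Phi_h \to \phi\,\Ind{I'}$ in $L^2(I''; H^{\alpha/2}_0(B_R))$ for any $I'' \Subset (t_1, t_2+h)$, with uniform $L^\infty$ bound and uniform spatial support in $B_R$.

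\emph{Step 3 (Continuity of $\cE_s$ in the second argument).} Write $\psi_h := \Phi_h - \phi\,\Ind{I'}$; we wish to show $\int \cE_s(v(s,\cdot), \psi_h(s,\cdot)) \d s \to 0$. Since $\supp \psi_h(s,\cdot) \subset B_R$, decompose
\[
\cE_s(v,\psi_h) \;=\; \iint\limits_{B_R\,B_R} [\cdots]\,k_s + 2 \iint\limits_{B_R\,B_R^c} [v(s,x)-v(s,y)]\,\psi_h(s,x)\,k_s(x,y)\,\d x\,\d y .
\]
For the first (``near'') term, Cauchy--Schwarz in the measure $k_s(x,y)\d x\d y$ together with \eqref{eq:K-2} reduces the estimate to $(2-\alpha)\,[v(s)]_{H^{\alpha/2}(B_R)}[\psi_h(s)]_{H^{\alpha/2}(B_R)}$, which is integrable in $s$ and tends to zero by $L^2(I;H^{\alpha/2})$-convergence of $\psi_h$. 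For the ``far'' term, fix $R<R'$ with $B_{R'} \Subset \Omega$ and split the $y$-integral at $|y-x|=R'-R$. On $\{|y-x|\geq R'-R\}$ we use $\|v\|_\infty < \infty$ and the tail bound from \eqref{eq:K-1} to obtain $|\cdots| \leq C\,\norm{v}_\infty \norm{\psi_h(s,\cdot)}_{L^1(B_R)}$, which vanishes after integration in $s$. On the annular part $\{x\in B_R,\,y\in B_{R'}\setminus B_R,\,|y-x|<R'-R\}$ we use Cauchy--Schwarz plus the standard bound $\iint_{B_R \times B_R^c} \psi(x)^2 k_s(x,y)\d x\d y \lesssim [\psi]^2_{H^{\alpha/2}(\mathbb{R}^d)}$ (valid because the zero-extension of $\psi \in H^{\alpha/2}_0(B_R)$ lies in $H^{\alpha/2}(\mathbb{R}^d)$ and \eqref{eq:K-2} applies on $B_{R'}$) to again obtain a bound proportional to $[\psi_h(s)]_{H^{\alpha/2}}$, vanishing as $h \to 0$.

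\emph{Main obstacle.} The delicate step is Step 3, specifically the ``far'' interaction between a non-localized $v$ and a $\psi_h$ that need not vanish near $\partial B_R$: a naive use of \eqref{eq:K-1} alone fails since $\int_{B_R^c} k_s(x,y)\d y$ is not uniformly bounded in $x \in B_R$. The resolution is to combine the global boundedness of $v$ (which handles the far tail beyond $B_{R'}$) with the $H^{\alpha/2}_0$-membership of $\psi_h$ (which, via the zero-extension across $\partial B_R$ and \eqref{eq:K-2}, controls the problematic annular contribution in terms of the Sobolev seminorm). Combining the three steps yields \eqref{eq:convergence}.
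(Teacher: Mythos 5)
Your proposal is correct in its main lines, but it takes a genuinely different route from the paper, so let me compare the two and then flag two places where your details would need strengthening.

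The paper keeps the nested time integral as is, exploits the factorization $k_s(x,y)=a(s,x,y)k_0(x,y)$ from the structure of the class $\cK$ to absorb the time-dependent coefficient into $V(s,x,y)=a(s,x,y)\bigl(v(s,x)-v(s,y)\bigr)$, and so rewrites the left side of \eqref{eq:convergence} as $\int_{I'}\iint V_h\Phi_h\,k_0$ and the right side as $\int_{I'}\iint V\Phi\,k_0$. It then splits $V_h\Phi_h-V\Phi=(V_h-V)\Phi + V_h(\Phi_h-\Phi)$ and treats each piece with \autoref{lem:steklov}(iii) after a near/far decomposition in space at $B=B_{R+\epsilon}$. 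You instead move the $t$-average through $\cE_s(v(s,\cdot),\cdot)$ by Fubini, turning the problem into convergence of a single, doubly-averaged test function $\Phi_h\to\phi\Ind{I'}$ tested against a fixed $v$. This is a clean idea that removes the need to track Steklov averages of $V$ at all, and it sidesteps the $a(s,\cdot,\cdot)$-absorption trick entirely; the price is that you no longer get to exploit the product structure $V_h\Phi_h$ and must control $\cE_s(v,\Phi_h-\phi\Ind{I'})$ directly.

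Two places need more care. First, your near/far split at $B_R$ creates an avoidable annular difficulty: since $\supp\phi(t,\cdot)\subset B_R$ and $\Phi_h,\psi_h$ inherit that support, if you instead choose the near ball to be $B_{R+\epsilon}$ with $\epsilon>0$ fixed, then on the far region $x\in B_R$, $y\in B_{R+\epsilon}^c$ and so $|x-y|>\epsilon$, making $\int_{B_{R+\epsilon}^c}k_0(x,y)\d y\le\Lambda\epsilon^{-\alpha}$ uniformly by \eqref{eq:K-1}. This is exactly what the paper does; your ``main obstacle'' and its resolution via the zero-extension and \eqref{eq:K-2} on $B_{R'}$ are correct but unnecessary. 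Second, in Step 2 you assert convergence $\Phi_h\to\phi\Ind{I'}$ only on $I''\Subset(t_1+h,t_2)$ (via the triangular-kernel formula), and dispose of the edge intervals $(t_1,t_1+h)\cup(t_2,t_2+h)$ by noting $\norm{\Phi_h}_\infty\le\norm\phi_\infty$. That uniform $L^\infty$ bound handles your far contribution there, but for the near contribution one additionally needs $\int_{t_1}^{t_1+h}[\psi_h(s,\cdot)]^2_{H^{\alpha/2}}\d s\to 0$ (and likewise near $t_2$); this follows from Jensen and absolute continuity of the Lebesgue integral applied to $s\mapsto[\phi(s,\cdot)]^2_{H^{\alpha/2}}$, but it is not covered by what you wrote and should be made explicit. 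With these two points fixed, the proof goes through.
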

\begin{proof} Set $V(t,x,y)=a(t,x,y)\left(v(t,x)-v(t,y)\right)$ and $\Phi(t,x,y)=\phi(t,x)-\phi(t,y)$. Since
\begin{align*}
 \int_{I'} \frac1h \int_{t}^{t+h} \cE_s(v(s,\cdot),\phi_h(t,\cdot)) \d s \d t 
 &= \int_{I'} \iint\limits_{\R^d\, \R^d} V_h(t,x,y) \Phi_h(t,x,y) k_0(x,y) \d x \d y \d t\ , \\
\int_{I'} \cE_t(v,\phi) \d t &= \int_{I'} \iint\limits_{\R^d\, \R^d} V(t,x,y) \Phi(t,x,y) k_t(x,y) \d x \d y \d t\ ,
\end{align*}
the convergence in \eqref{eq:convergence} follows if we show both
\begin{subequations}
\begin{align}
 \int_{I'} \iint\limits_{\R^d\, \R^d} \bet{V_h(t,x,y)-V(t,x,y)} \bet{\Phi(t,x,y)} k_0(x,y) \d x \d y \d t &\xrightarrow{h \to 0+} 0 , \label{eq:teilA}\\
 \int_{I'} \iint\limits_{\R^d\, \R^d} \bet{V_h(t,x,y)}  \bet{\Phi_h(t,x,y)-\Phi(t,x,y)} k_0(x,y) \d x\d y \d t & \xrightarrow{h \to 0+} 0. \label{eq:teilB}
\end{align} 
\end{subequations}
First we prove \eqref{eq:teilA}. Define $B=B_{R+\epsilon}$ for some fixed $\epsilon>0$. A usual decomposition of the integral over $\R^d \times \R^d$ yields
\begin{align*}
 \int_{I'} \iint\limits_{\R^d\, \R^d} & \bet{V_h(t,x,y)-V(t,x,y)} \bet{\phi(t,x)-\phi(t,y)} k_0(x,y) \d x \d y \d t \\
 &= \int_{I'} \iint\limits_{B\, B} \bet{V_h(t,x,y)-V(t,x,y)} \bet{\phi(t,x)-\phi(t,y)} k_0(x,y) \d x \d y \d t \\
 &\qquad + 2 \int_{I'} \int_{B} \bet{\phi(t,x)} \int_{B^c} \bet{V_h(t,x,y)-V(t,x,y)} k_0(x,y)\d y\d x \d t \\&=: I_1(h) + I_2(h).
\end{align*}
Hölder's inequality applied to $I_1(h)$ shows that $I_1(h) \to 0$:
\enlargethispage{1em}
\begin{align*}
 \norm{(V_h-V) \Phi_h\, k_0}_{L^1(I' \times B \times B)} &\leq \norm{(V_h-V) {k_0}^{\frac12}}_{L^2(I';L^2(B\times B))} \norm{\Phi {k_0}^{\frac12}}_{L^2(I';L^2(B \times B))} \\
&\leq  \norm{(V_h-V) {k_0}^{\frac12}}_{L^2(I';L^2(B\times B))} \norm{\phi}_{L^2(I';H^{\alpha/2}(B))},
\end{align*}
where we have used \eqref{eq:K-2} in the second inequality. \autoref{lem:steklov}(iii) implies that the first factor tends to zero since -- again due to property \eqref{eq:K-2} --
\[ v \in L^2(I';H^{\alpha/2}(B)) \Rightarrow V\, k_0^{\frac12} \in L^2(I';L^2(B \times B)).\]
In a similar way we obtain the convergence of $I_2(h)$:
\begin{align*}
\int_{I'} \int_{B}& \bet{\phi(t,x)} \int_{B^c} \bet{V_h(t,x,y)-V(t,x,y)} k_0(x,y)\d y\d x \d t \\
&\leq \norm{\phi}_{L^\infty(I'\times B)} \int_{I'} \norm{V_h(t,\cdot,\cdot)-V(t,\cdot,\cdot)}_{L^\infty(\R^d \times \R^d)} \iint\limits_{B_R\, B^c} k_0(x,y) \d y \d x \d t\\
&\leq \Lambda\, \epsilon^{-\alpha}\, \bet{B_R}\, \norm{\phi}_{L^\infty(I'\times B)} \norm{V_h-V}_{L^1(I;L^\infty(\R^d \times \R^d))},
\end{align*}
where we have applied \eqref{eq:K-1} in the second inequality. The convergence of the last factor follows again from \autoref{lem:steklov}(iii).

Next, we prove \eqref{eq:teilB}: Again, we use the decomposition
\begin{align*}
 \int_{I'} \iint\limits_{\R^d\, \R^d} & \bet{V_h(t,x,y)}  \bet{\Phi_h(t,x,y)-\Phi(t,x,y)} k_0(x,y) \d x\d y \d t \\
 &= \int_{I'} \iint\limits_{B\, B} \bet{V_h(t,x,y)} \bet{\Phi_h(t,x,y)-\Phi(t,x,y)} k_0(x,y)\d x \d y \d t \\
 &\qquad + 2 \int_{I'} \int_{B} \bet{\phi_h(t,x)-\phi(t,x)} \int_{B^c} \bet{V_h(t,x,y)} k_0(x,y) \d y \d x \d t\\
 &=: J_1(h) + J_2(h).
\end{align*}
The convergence $J_1(h) \xrightarrow{h \to 0+} 0$ follows by the same argument as we used for the convergence of $I_1(h)$. It remains to show that $J_2(h) \xrightarrow{h \to 0+} 0$:
\begin{align*}
 \tfrac12 J_2(h) &\leq \int_{I'} \norm{\phi_h(t, \cdot) - \phi(t,\cdot)}_{L^\infty(B_R)} \iint\limits_{B_R\, B^c} \bet{V_h(t,x,y)} k_0(x,y) \d x \d y \d t \\
 &\leq 2 \epsilon^{-\alpha} \bet{B_R} \norm{v_h}_{L^\infty(I';L^\infty(\R^d))}  \norm{\phi_h-\phi}_{L^1(I';L^\infty(\R^d))}.
\end{align*}
Finally, we apply \autoref{lem:steklov}(ii),(iii) to conclude that $J_2(h)$ converges to zero. This finishes the proof.
\end{proof}
\end{appendix}

\bibliography{lit_harnack}
\bibliographystyle{bibstyle_english_num}
\end{document}